\documentclass[10pt, a4paper, reqno, oneside]{amsart}

\usepackage{amsmath, amsfonts, amsthm, amssymb, mathtools, amscd, enumerate, multicol, scalefnt, relsize}
\usepackage[mathscr]{euscript}
\usepackage{mathbbol}
\usepackage[latin1]{inputenc}
\usepackage{graphicx}
\usepackage[all]{xy}
\usepackage{enumitem}
\usepackage{autobreak,lipsum}
\setlist[itemize]{noitemsep, topsep=1pt, leftmargin=20pt}
\usepackage{xfrac}
\usepackage{todonotes}

\usepackage{fullpage}

\usetikzlibrary{arrows.meta,positioning}
\usetikzlibrary{backgrounds}

\usepackage[hypertexnames=false,
backref=page,
    pdfpagemode=UseNone,
    breaklinks=true,
    extension=pdf,
    colorlinks=true,
    linkcolor=blue,
    citecolor=blue,
    urlcolor=blue,
]{hyperref}

\newcommand\bcdot{\ensuremath{
  \mathchoice
   {\mskip\thinmuskip\lower0.2ex\hbox{\scalebox{1.6}{$\cdot$}}\mskip\thinmuskip}}
   {\mskip\thinmuskip\lower0.2ex\hbox{\scalebox{1.6}{$\cdot$}}\mskip\thinmuskip}
   {\lower0.3ex\hbox{\scalebox{1.2}{$\cdot$}}}
   {\lower0.3ex\hbox{\scalebox{1.2}{$\cdot$}}}
}
\theoremstyle{plain}
\newtheorem{theo}{Theorem}[section]

\theoremstyle{definition}

\newtheorem{definition}[theo]{Definition}

\theoremstyle{plain}
\newtheorem{lemma}[theo]{Lemma}
\newtheorem{theorem}[theo]{Theorem}
\newtheorem{corollary}[theo]{Corollary}
\newtheorem{proposition}[theo]{Proposition}

\theoremstyle{definition}

\newtheorem{remark}[theo]{Remark}

\theoremstyle{plain}
\newtheorem{thmint}{Theorem}

\theoremstyle{definition}
\newtheorem*{definition*}{Definition}

\DeclareSymbolFontAlphabet{\mathbb}{AMSb}
\DeclareSymbolFontAlphabet{\mathbbl}{bbold}

\makeatletter
\@namedef{subjclassname@2020}{\textup{2020} Mathematics Subject Classification}
\makeatother

\allowdisplaybreaks[4]

\title[]{On the long time behavior of ancient \\ homogeneous Ricci flows}

\author{Anusha M.\ Krishnan}
\address[Anusha M.\ Krishnan]{Department of Mathematics, Indian Institute of Technology Bombay \\ Powai, 400076 Mumbai, India}
\email{anushamk@math.iitb.ac.in}

\author{Francesco Pediconi}
\address[Francesco Pediconi]{Dipartimento di Scienze Matematiche ``Giuseppe Luigi Lagrange'' \\ Politecnico di Torino, corso Duca degli Abruzzi 24, 10129 Torino, Italy}
\email{francesco.pediconi@polito.it}

\subjclass[2020]{53C21, 53C30, 53E20, 57S15}
\keywords{Ricci flow, ancient solutions, Einstein metrics, solitons, collapse.}
\thanks{The second-named author is a member of GNSAGA of INdAM}

\begin{document}

\begin{abstract}
We prove a precompactness theorem for invariant metrics on compact homogeneous spaces without injectivity radius bounds, assuming uniform bounds on the diameter and on all derivatives of the curvature tensor. As a consequence, we prove that every ancient homogeneous Ricci flow on a compact manifold admits a blow-down sequence that converges to a gradient shrinking Ricci soliton.
\end{abstract}

\maketitle

\section{Introduction}

Hamilton's Ricci flow is a weakly parabolic PDE that deforms a Riemannian metric in the direction of its negative Ricci tensor \cite{Ham82}. Much like the heat equation, it exhibits strong regularizing effects on Riemannian metrics, and has therefore become a fundamental tool in classification-type problems in both geometry and topology \cite{Perel02, BW08, BrSc09, BamKl23}. For this reason, the long-time behavior of the flow has been extensively studied in the literature, with particular emphasis on immortal solutions, i.e., those defined for all $t>0$, and on {\it ancient solutions}, i.e., those defined for all $t <0$. \smallskip

Classical structural results address {\it non-collapsed solutions}, namely those for which the injectivity radius of the corresponding curvature-normalized metrics admits a uniform positive lower bound. This assumption, combined with an appropriate curvature growth condition, allows one to apply Hamilton's Compactness Theorem \cite[Theorem 1.2]{Ham95'} to obtain a limit Ricci flow via a parabolic blow-down sequence, which can then be identified using a suitable monotone functional. This approach has been used to show that every non-collapsed Type-III solution on a compact manifold with diameter $O(\sqrt{t})$ admits a negative Einstein metric as a blow-down limit (see \cite{Ham99} and \cite[Theorem 1.3]{FIN05}), and that every non-collapsed Type-I ancient solution admits a non-flat shrinking Ricci soliton as a blow-down limit (see \cite[Theorem 4.1]{CaoZh11} and \cite[Theorem 3.1]{Na10}). \smallskip

In the absence of injectivity radius estimates, Hamilton's Compactness Theorem has been generalized by introducing a notion of convergence in the category of Riemannian groupoids (see \cite[Theorem 5.12]{Lott07}). As an application, results on the long time behavior of immortal Ricci flows have been obtained, even without non-collapsing assumptions. By \cite[Theorem 1.2]{Lott10}, any Type-III solution on a compact $3$-manifold with diameter $O(\sqrt{t})$ admits a blow-down sequence that converges smoothly, in the groupoid sense, to a homogeneous expanding Ricci soliton. Moreover, by \cite[Theorem 1]{BL18}, in the immortal homogeneous case an analogous result holds without any diameter or dimension assumptions. Here, we recall that a Ricci flow is called {\it homogeneous} if it is homogeneous at any time. \smallskip

No analogous result is currently known for collapsed ancient solutions, even in the homogeneous case. Our main theorem is the following.

\begin{thmint} \label{thm:MAIN}
Every ancient homogeneous Ricci flow on a compact manifold admits a blow-down sequence that converges locally in the $\mathcal{C}^{\infty}$-topology to a gradient shrinking Ricci soliton.
\end{thmint}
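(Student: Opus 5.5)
The plan is to follow the classical non-collapsed strategy: curvature normalization, blow-down, compactness, and identification via a monotone quantity. The one change is that Hamilton's Compactness Theorem is replaced by the precompactness theorem for invariant metrics announced in the abstract. That theorem gives precompactness without injectivity radius bounds, assuming uniform bounds on the diameter and on all derivatives of curvature.

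Let $g(t)$, $t\in(-\infty,0]$, be an ancient homogeneous Ricci flow on a compact homogeneous space $M=G/H$. If $g$ is flat, then it is static and there is nothing to prove, so assume it is non-flat.

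\textbf{Step 1: Type-I curvature decay.} The first step is to show that every such flow is Type I, i.e.\ $|\mathrm{Rm}(g(t))|\le C/|t|$ for $t\le -1$. On a compact homogeneous space the scalar curvature is constant in space. It evolves by $\tfrac{d}{dt}\mathrm{scal}=2|\mathrm{Ric}|^2\ge \tfrac{2}{n}\mathrm{scal}^2$, and ancientness forces $\mathrm{scal}>0$. Integrating the ODE backwards gives $0<\mathrm{scal}(g(t))\le n/(2|t|)$.

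To pass from scalar to full curvature, I would use the known fact that for homogeneous metrics $|\mathrm{Rm}|\le C(n)\,\mathrm{scal}$ as long as $\mathrm{scal}>0$. This holds on compact homogeneous spaces after normalizing: a unit-scalar-curvature invariant metric has bounded curvature by the Böhm--Lafuente-type estimates. I will take this estimate as given from the literature.

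Higher derivatives then follow from Shi's estimates, or directly from homogeneity and the bracket-flow formalism: $|\nabla^k\mathrm{Rm}|\le C_k|t|^{-1-k/2}$.

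\textbf{Step 2: Diameter bound, which I expect to be the main obstacle.} Consider the blow-downs $g_i(t)=\tau_i^{-1}g(\tau_i t)$ with $\tau_i\to\infty$. Their curvature and all its derivatives are uniformly bounded on compact subsets of $(-\infty,0)$. The precompactness theorem, however, also requires a uniform diameter bound at a fixed time, say $t=-1$, and this fails in general: the blown-down metrics may expand in some directions. I would handle this with a fibration/splitting argument.

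Using the structure of $G$, pass to a subsequence along which the invariant metrics $g_i(-1)$ degenerate along a filtration of $\mathfrak{g}$ by intermediate subalgebras $\mathfrak h\subset \mathfrak k_1\subset\dots\subset\mathfrak g$. The scales on the successive quotients $\mathfrak k_{j+1}/\mathfrak k_j$ separate. The directions that blow up give, in the limit, a flat Euclidean factor: the curvature along them tends to zero, so locally the limit splits off $\mathbb R^m$. The directions that stay bounded span the orbit $K/H$ of a subgroup, whose diameter is bounded.

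Concretely, I would apply the precompactness theorem to the fibers $K/H$ with their induced metrics. Since the second fundamental form of the fibers is controlled by the curvature bounds and homogeneity, the total space converges locally smoothly to a product $(\text{limit of fibers})\times\mathbb R^m$, or to a twisted version of it. Since the limit is allowed to be non-compact, local $\mathcal C^\infty$ convergence of the pointed manifolds is all that is needed.

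\textbf{Step 3: Identification of the limit.} The limit flow $g_\infty(t)$ is ancient, with $\mathrm{scal}\le n/(2|t|)$, and is non-flat if we normalize using $\mathrm{scal}(g(t))|t|\ge c>0$. This lower bound holds along a subsequence: otherwise $\mathrm{scal}\,|t|\to0$ along the flow, and the ODE comparison forces flatness.

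To show that $g_\infty(t)$ is a gradient shrinking soliton, I would use Perelman's reduced volume or the $\nu$-entropy based at $t=0$. Along the sequence $\tau_i\to\infty$ this quantity is monotone and bounded, so the limit has constant entropy. Constant entropy forces the soliton equation $\mathrm{Ric}+\nabla^2 f=\tfrac{1}{2|t|}g$. This is the argument of Cao--Zhang and Naber.

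The point needing care is that entropy or reduced volume must pass to the limit even though the sequence collapses. Here homogeneity helps. The reduced volume of a homogeneous flow can be computed on the universal cover, or on the locally converging fibers, where it is non-collapsed. Alternatively, one can pass to the limit in the bracket flow and use the known fact that a collapsing homogeneous limit is a product of a compact homogeneous Einstein-type factor and a Gaussian soliton.
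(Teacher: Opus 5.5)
Your Step 2 aims at an obstacle that does not exist and leaves the real one untouched. The diameter bound does hold. Once the flow is Type-I, Hamilton's distance-distortion estimate \cite[Theorem 17.2]{Ham95} gives $\mathrm{diam}(M,g(t))=O(\sqrt{|t|})$. So the blow-downs $g^{(n)}(-1)$ have uniformly bounded diameter and, by Shi's estimates, uniformly bounded geometry: nothing expands at the curvature scale. What degenerates is the injectivity radius. The real task is to show that the local $\mathcal{C}^\infty$ limit of the geometric models is a Riemannian product $\mathbb{R}^s\times(B,\check g)$ with $B$ compact homogeneous. Your filtration sketch does not deliver this:
\begin{itemize}
\item[$\bcdot$] curvature tending to zero along some directions does not produce a local splitting;
\item[$\bcdot$] control of the second fundamental form of the orbits $K/H$ is asserted without reason;
\item[$\bcdot$] a ``twisted'' limit would be of no use in Step 3.
\end{itemize}
The paper's mechanism is specific. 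The Gromov--Hausdorff limit is smooth and $\mathsf{G}$-homogeneous, and equivariant Fukaya fibration (with the Naber--Tian refinement) forces it to be $M/\mathsf{T}$ for a torus $\mathsf{T}\subset N_{\mathsf{G}}(\mathsf{H})/\mathsf{H}$. After averaging along $\mathsf{T}$ (Cheeger--Fukaya--Gromov), $M\to M/\mathsf{T}$ becomes a Riemannian submersion with flat, totally geodesic fibers. Its O'Neill tensor and all derivatives tend to zero, so the limit submersion has integrable horizontal distribution and flat fibers. Hence the limit is locally $\mathbb{R}^s\times(M/\mathsf{T},\check g)$.

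Step 3 fails in the collapsed case, which is the only case not already covered by \cite[Theorem 4.2]{BLS19}. At the curvature scale, Perelman's reduced volume and $\nu$-entropy are controlled by volume ratios. Along a collapsed Type-I flow the asymptotic reduced volume is therefore $0$ and the entropy tends to $-\infty$, so the Cao--Zhang/Naber monotonicity argument yields nothing; that is exactly why those results assume non-collapsing. The universal cover does not rescue this, because it can be compact and still collapsed: the Berger-sphere ancient flows on $S^3$ collapse along the Hopf fibers. Your ``known fact'' about collapsing homogeneous limits is essentially the theorem itself.

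The paper needs no entropy. The limit flow is $\mathbb{R}^s\times(B,h(t))$ with $h(t)$ an ancient homogeneous flow on the compact space $B=M/\mathsf{T}$. Since blow-down limits of blow-down limits are blow-down limits, one iterates until the compact factor is non-collapsed. Then \cite[Theorem 4.2]{BLS19} gives a positive Einstein metric on it, and $\mathbb{R}^s$ times the shrinking Einstein flow is the gradient shrinking soliton.

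A smaller point: in Step 1, the static inequality $|\mathrm{Rm}|\le C(n)\,\mathrm{scal}$ for invariant metrics with $\mathrm{scal}>0$ is false. Left-invariant metrics on $SU(2)$ can have $\mathrm{scal}\to 0^+$ with $|\mathrm{Rm}|$ of order one. The Type-I bound along ancient homogeneous flows is a dynamical statement, \cite[Corollary 2]{BLS19}.
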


In the above theorem, {\it convergence locally in the $\mathcal{C}^{\infty}$-topology} can be understood as follows. By the Rauch Comparison Theorem, every homogeneous metric with sectional curvature $|{\rm sec}|\leq 1$ can be pulled back via the exponential map on a single tangent ball of radius $\pi$. This construction yields an incomplete locally homogeneous manifold, with injectivity radius at the origin equal to $\pi$, called a geometric model. By \cite[Theorem A]{Ped22}, every Riemannian locally homogeneous space with $|{\rm sec}|\leq 1$ has a unique geometric model. Therefore, since the geometry of a Riemannian (locally) homogeneous space is encoded at a single point, smooth convergence of geometric models provides a natural framework for describing collapsing phenomena in the homogeneous setting (see, e.g., \cite{BL18,BLS19,Ped22}). Thus, we say that a sequence $(M^{(n)},g^{(n)})$ of Riemannian homogeneous spaces converges locally in the $\mathcal{C}^{\infty}$-topology to a (possibly incomplete) Riemannian locally homogeneous space $(M^{(\infty)},g^{(\infty)})$ if the corresponding sequence of geometric models converges smoothly to the geometric model of $(M^{(\infty)},g^{(\infty)})$. We remark that this notion of convergence is closely related to convergence in the sense of Riemannian groupoids described in \cite[Definition 5.8]{Lott07}. We refer to Section \ref{subsect:conv} for further details. \smallskip

By the previously mentioned results \cite[Theorem 4.1]{CaoZh11} and \cite[Theorem 3.1]{Na10}, it is natural to expect a locally homogeneous gradient shrinking Ricci soliton as a blow-down limit of an ancient homogeneous Ricci flow. By \cite[Theorem 1.1]{PetWy09}, homogeneous gradient Ricci solitons are rigid, i.e., up to covering, they are isometric to the product of an Einstein manifold and a flat factor. Although the proof of \cite[Theorem 1.1]{PetWy09} relies on completeness of the metric, with only minor modifications it extends to show that locally homogeneous gradient Ricci solitons are rigid as well, as communicated to us by B{\"o}hm (see Appendix \ref{sect:appendix}). In fact, the solitons obtained in Theorem \ref{thm:MAIN} are locally isometric to the product of an Einstein metric with positive scalar curvature and a flat factor.
\smallskip

The main ingredient to prove Theorem \ref{thm:MAIN} is the following precompactness theorem, that is second main result of the paper.

\begin{thmint} \label{thm:MAIN'}
Let $M = \mathsf{G}/\mathsf{H}$ be a compact, connected homogeneous space, $D >0$ a constant and $\{\Gamma_k\}$ a sequence of positive numbers. Then, up to passing to a finite cover of $M$, every sequence $\{g^{(n)}\}$ of $\mathsf{G}$-invariant metrics on $M$ satisfying
$$
\mathrm{diam}\big(M,g^{(n)}\big) \leq D \quad \text{ and } \quad \sum_{i=0}^k \big|\big(D^{g^{(n)}}\big)^i\mathrm{Rm}(g^{(n)})\big|_{g^{(n)}} \leq \Gamma_k \,\, \text{for every $k \geq 0$} \,\, ,
$$
for all $n \in \mathbb{N}$, admits a subsequence for which the following hold: \begin{itemize}
\item[i)] there exists a compact torus $\mathsf{T} \subset N_{\mathsf{G}}(\mathsf{H})/\mathsf{H}$ acting freely on $M$ such that the metrics $g^{(n)}$ converge, in the Gromov--Hausdorff topology, to a $\mathsf{G}$-invariant metric $\check{g}$ on the quotient $M/\mathsf{T}$;
\item[ii)] the manifolds $(M,g^{(n)})$ converge locally in the $\mathcal{C}^{\infty}$-topology to the product of $(M/\mathsf{T},\check{g})$ and a flat factor.
\end{itemize}
\end{thmint}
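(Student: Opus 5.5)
Since every $g^{(n)}$ is $\mathsf{G}$-invariant, I will encode it algebraically. Fix an $\mathrm{Ad}(\mathsf{H})$-invariant complement $\mathfrak{m}$ of $\mathfrak{h}$ in $\mathfrak{g}$, compatible with a fixed $\mathrm{Ad}(\mathsf{G})$-invariant background inner product $Q$ (after passing to a finite cover we may assume $\mathsf{G}$ compact, connected, and $\mathsf{H}$ connected). Then $g^{(n)}$ corresponds to an $\mathrm{Ad}(\mathsf{H})$-invariant inner product on $\mathfrak{m}$. Diagonalize it with respect to $Q$: $g^{(n)} = \sum_j x^{(n)}_j Q|_{\mathfrak{m}_j}$ on a $Q$-orthogonal, $\mathrm{Ad}(\mathsf{H})$-invariant decomposition $\mathfrak{m} = \mathfrak{m}_1 \oplus \dots \oplus \mathfrak{m}_\ell$, which may depend on $n$. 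Since the space of such decompositions is compact, I pass to a subsequence along which the decomposition converges and the ordered eigenvalues $x^{(n)}_1 \le \dots \le x^{(n)}_\ell$ have $\log$-ratios converging in $[-\infty,\infty]$.

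The diameter bound gives a uniform upper bound on all $x_j^{(n)}$, because a one-parameter subgroup orbit realizes a distance comparable to $\sqrt{x_j}$ before closing up. Directions with $x^{(n)}_j$ bounded below converge to a nondegenerate limit. So the issue is the subspace $\mathfrak{a}\subset\mathfrak{m}$ on which $x^{(n)}_j\to 0$. This is the core of the proof, and the step I expect to be the main obstacle: showing that the bounded curvature, and its bounded derivatives, force $\mathfrak{a}$ to be an abelian subalgebra, centralizing and normalizing $\mathfrak{h}$, whose closure is a torus $\mathsf{T}\subset N_{\mathsf{G}}(\mathsf{H})/\mathsf{H}$.

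My plan for this step is to use the standard formula for the Ricci curvature or sectional curvature of a $\mathsf{G}$-invariant metric in terms of the structure constants $[ijk]$ and the $x_j$. The terms $[ijk]\,x_k/(x_ix_j)$ blow up whenever a bracket $[\mathfrak{m}_i,\mathfrak{m}_j]$ with $i,j$ collapsing has a component in a non-collapsing direction, or collapses more slowly than allowed. Here I would exploit a Böhm–Wilking / Böhm–Lafuente type argument: the bounded-curvature condition along a sequence of homogeneous metrics forces the collapsing directions to generate a nilpotent algebra. In a compact group a nilpotent subalgebra is abelian, hence toral. The derivative bounds are what upgrade the conclusion from merely the limit of $\mathfrak{a}$ to its closure. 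Otherwise an irrational line in a torus can collapse at wildly different rates, and passing to its closure would change the geometry; bounded covariant derivatives prevent the degenerating eigenvalues of a non-closed subgroup from escaping. Then $\mathsf{T}$ acts freely on $M$ after passing to a finite cover, so that isotropy in $N_{\mathsf{G}}(\mathsf{H})/\mathsf{H}$ is killed.

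With $\mathsf{T}$ in hand, part i) follows. The orbits of $\mathsf{T}$ have $g^{(n)}$-diameter tending to $0$, and the horizontal metrics converge to a $\mathsf{G}$-invariant $\check g$ on $M/\mathsf{T}$. The standard estimate $d_{GH}\le$ maximal fiber diameter then gives Gromov--Hausdorff convergence. For ii), I use the uniform bounds on $|D^i\mathrm{Rm}|$ and the uniqueness of geometric models from \cite[Theorem A]{Ped22}. After rescaling so that $|\mathrm{sec}|\le1$, the geometric models of $g^{(n)}$ are determined by the curvature jets at one point. These form a precompact family, so by Arzelà--Ascoli a subsequence converges smoothly to a locally homogeneous geometric model. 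Finally, I identify this limit: in the collapsing directions, $\mathfrak{a}$ is abelian and central enough that the limiting infinitesimal model splits off the Riemannian submersion $M\to M/\mathsf{T}$ with totally geodesic flat fibers. Its O'Neill $A$-tensor is controlled by $[\mathfrak{a}$-components of brackets$]\cdot\sqrt{x_{\mathfrak a}}\to0$. So the limit is locally the product of $(M/\mathsf{T},\check g)$ with $\mathbb{R}^{\dim\mathsf{T}}$.
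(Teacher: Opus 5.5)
\textbf{There is a genuine gap exactly at the step you flag as the main obstacle.} The claims you use to bridge it are not merely unproved but false, so the argument cannot be completed as planned.

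\textbf{The toral step is only announced.} The Lie-theoretic core (the degenerating eigendirections form an abelian subalgebra of $\mathfrak{m}_0$ whose closure is a torus) is not carried out. Your heuristic that the terms $[ijk]\,x_k/(x_ix_j)$ ``blow up'' ignores a cancellation. When $x_k\to0$ with bounded curvature, the terms of order $1/x_k$ in the Ricci formula must cancel against each other, as they do for a Berger sphere with shrinking Hopf fibre. So nothing can be read off term by term. The paper avoids curvature formulas here: $\mathsf{T}$ comes from Fukaya's $\mathsf{G}$-equivariant fibration theorem. Its fibres are compact homogeneous infranil manifolds $\mathsf{K}^{(n)}/\mathsf{H}$, hence tori.

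\textbf{Your opening eigenvalue bound is false.} An orbit of a one-parameter subgroup need not be minimizing. Consider $S^3\times T^2$ with the round metric times a flat metric in which an irrational direction has length $\epsilon_n$ and a complementary direction has length $L_n\to+\infty$, with $\epsilon_n\to0$ fast enough. It has bounded geometry and bounded diameter, yet one eigenvalue diverges.

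\textbf{The derivative bounds do not prevent a non-closed collapsing line, and this breaks ii).} Take $\mathsf{G}=M=SU(2)\times U(1)$ and $\mathsf{H}=\{e\}$. Let $X_1,X_2,X_3$ be a basis of $\mathfrak{su}(2)$ with $[X_1,X_2]=2X_3$ (and cyclically) and $\exp(2\pi X_1)=e$, and let $Z$ generate $U(1)$ with period $2\pi$. Let $g^{(n)}$ be the left-invariant metric with:
\begin{itemize}
\item[$\bcdot$] $X_2,X_3$ orthonormal and orthogonal to $\mathfrak{t}=\mathrm{span}(X_1,Z)$;
\item[$\bcdot$] on $\mathfrak{t}$: $|X_1+\alpha Z|=\epsilon_n\to0$, $|Z|=1$ and $X_1+\alpha Z\perp Z$, with $\alpha\notin\mathbb{Q}$.
\end{itemize}
These metrics are $\mathrm{Ad}(\mathsf{T})$-invariant for the maximal torus $\mathsf{T}=\exp\mathfrak{t}$.

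Set $\tau_n^2\coloneqq\epsilon_n^2+\alpha^2$ and $N=Z+\alpha\tau_n^{-2}X_1$. Then $N$ is orthogonal to the ideal $\mathfrak{su}(2)$ and $\mathrm{ad}_N$ is skew, so $N$ is parallel by the Koszul formula. Hence the universal cover is $\mathbb{R}\times S^3_{\tau_n}$, where $S^3_\tau$ is the Berger sphere with $|X_1|=\tau$, and all $|D^k\mathrm{Rm}|$ are uniformly bounded.

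The line $\mathbb{R}(X_1+\alpha Z)$ is dense in $\mathsf{T}$ and $g^{(n)}|_{\mathfrak{t}}$ stays bounded. So the flat $\mathsf{T}$-orbits have diameter tending to $0$, and $(M,g^{(n)})$ converges in the Gromov--Hausdorff sense to $M/\mathsf{T}=S^2$ of curvature $4$. However, the local limit is $\mathbb{R}\times S^3_{|\alpha|}$, with scalar curvature $8-2\alpha^2$. It is not $\mathbb{R}^2\times S^2$, which has scalar curvature $8$.

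In your notation, $\mathfrak{a}=\mathbb{R}(X_1+\alpha Z)$ and $\mathsf{T}=\overline{\exp\mathfrak{a}}$. The O'Neill tensor satisfies $A_{X_2}X_3=X_1$, of length $\tau_n\to|\alpha|$. It takes values in $\mathfrak{t}$, not in $\mathfrak{a}$, and $g^{(n)}$ does not degenerate on $\mathfrak{t}\ominus\mathfrak{a}$. So your bound of $A$ by $\sqrt{x_{\mathfrak{a}}}$ fails.

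\textbf{The paper's proof breaks at the corresponding point.} Claim (iii) of Lemma~\ref{lem:bn} deduces $|V|_{g^{(n)}}\to0$ for every $V\in\mathfrak{t}$ from the fibre diameter tending to zero. But a flat torus has small diameter as soon as its lattice becomes dense, as in the example. Corollary~\ref{cor:conv} and Lemma~\ref{lem:Akth} depend on that claim. Part i) is consistent with the example. Part ii) as stated needs an additional hypothesis, for instance that the directions on which $g^{(n)}$ degenerates already span $\mathfrak{t}$. Neither your argument nor the paper's can establish ii) in full generality.
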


The proof of Theorem \ref{thm:MAIN'} relies on two main steps. In the first one, we observe that every sequence of $\mathsf{G}$-invariant metrics $g^{(n)}$ as above subconverges in the $\mathsf{G}$-equivariant Gromov-Hausdorff topology to a compact Alexandrov space, which is in fact smooth by homogeneity. Then, the $\mathsf{G}$-equivariant version of Fukaya's Fibration Theorem \cite[Theorem 12.1']{Fu90}, together with its strengthened form \cite[Theorem 3.1, Theorem 3.2]{NaTian18}, allows us to deduce that, up to passing to a finite cover of $M$, the Gromov-Hausdorff limit of $(M,g^{(n)})$ is, in fact, diffeomorphic to the quotient of $M$ by a torus $\mathsf{T}$. Here, $\mathsf{T}$ is a closed abelian subgroup of a compact complement of $\mathsf{H}$ inside its normalizer $\mathsf{N}_{\mathsf{G}}(\mathsf{H})$, acting on the right on $M$ by $\mathsf{G}$-equivariant diffeomorphisms. Furthermore, quantitative control on the canonical projection $M \to M/\mathsf{T}$ along the sequence is established. We refer to Theorem \ref{thm:equivGH1} for more details.

Subsequently, we use a symmetrization result for collapsed metrics from \cite{ChFuGro92} (see Theorem \ref{thm:equivGH2}), that allows us to perform a dimensional reduction with respect to the projection $M \to M/\mathsf{T}$. Together with the quantitative estimates from Theorem \ref{thm:equivGH1}, this allows us to characterize the limit of $(M,g^{(n)})$, up to passing to a subsequence, in the local $\mathcal{C}^{\infty}$-topology. We refer to Theorem \ref{thm:equivGH3} for more details. \smallskip

We now outline the strategy to prove Theorem \ref{thm:MAIN}. Let $g(t)$ be an ancient homogeneous Ricci flow on a compact manifold $M = \mathsf{G}/\mathsf{H}$, where $\mathsf{G}$ is a compact Lie group and $\mathsf{H} \subset \mathsf{G}$ is a closed subgroup. If $g(t)$ is non-collapsed, then the result is already known (see, e.g., \cite[Theorem 4.2]{BLS19}). Thus, we focus on the collapsed case. By \cite[Corollary 2]{BLS19}, the solution $g(t)$ is Type-I and, as a consequence, its diameter is $O(\sqrt{|t|})$ by \cite[Theorem 17.2]{Ham95}. Notice that in the immortal case, Type-III curvature growth does not by itself imply such a diameter bound, which is essential to ensure that the blow-down limits remain compact.

As a consequence of these curvature and diameter bounds, for every blow-down sequence
$$
g^{(n)}(t) \coloneqq \frac1{\tau^{(n)}}\,g\big(\tau^{(n)}t\big) \,\, , \quad \tau^{(n)} \to +\infty
$$
along $g(t)$, the metrics $g^{(n)}(-1)$ have uniformly bounded diameter and uniformly bounded curvature tensor, together with all its covariant derivatives of arbitrary order. Therefore, by applying Theorem \ref{thm:MAIN'} and a compactness theorem for Ricci flows without injectivity radius estimates (see Theorem \ref{thm:Lottcompactness}), we obtain that the Ricci flow solutions $(M,g^{(n)}(t))$ converge locally in the $\mathcal{C}^{\infty}$-topology to the product $\mathbb{R}^s \times (M/\mathsf{T},h(t))$. Here, $\mathsf{T}^s \subset \mathsf{N}_{\mathsf{G}}(\mathsf{H})/\mathsf{H}$ is a compact torus acting freely on $M$ and $h(t)$ is a non-collapsed ancient homogeneous Ricci flow on the quotient $M/\mathsf{T}$. By \cite[Theorem 4.2]{BLS19}, $h(t)$ emanates from an Einstein metric $\check{g}_o$ on $M/\mathsf{T}$ with positive scalar curvature. This allows us to conclude that $\mathbb{R}^s \times (B,\check{g}(t))$ is a blow-down limit of the original Ricci flow $(M,g(t))$, where $\check{g}(t)$ is the ancient solution on $B$ satisfying $\check{g}(-1) = \check{g}_o$. We refer to Theorem \ref{thm:last} for more details. \smallskip

We now make some remarks about our main results. An important feature of Theorem \ref{thm:MAIN'} is that the collapse occurs in the direction of a compact torus $\mathsf{T} \subset N_{\mathsf{G}}(\mathsf{H})/\mathsf{H}$. As a consequence, the blow-down limit in Theorem \ref{thm:MAIN} is in fact complete. Another consequence is that $M$ admits no collapsed ancient $\mathsf{G}$-homogeneous Ricci flows if the group of $\mathsf{G}$-equivariant diffeomorphisms $\mathsf{N}_{\mathsf{G}}(\mathsf{H}) / \mathsf{H}$ is finite. This was already known by \cite{BWZ04,Ped19} (see also \cite[Remark 5.3]{BLS19} and \cite[Theorem A]{KPS25}). This implies, for example, that collapsed ancient homogeneous K{\"ahler}-Ricci flows cannot exist on compact manifolds with finite fundamental group (see, e.g., \cite[Sect.\ 8.88]{Bes08}). \smallskip

Let us also recall some related results on ancient homogeneous Ricci flows which have already appeared in the literature. In \cite[Theorem A]{PedSb22}, a general existence theorem was established, which generalizes all previously known homogeneous examples. This construction produces ancient solutions on a compact manifold $M$ emanating from a (not necessarily unstable) Einstein metric on the base $M/\mathsf{T}$ of a principal torus bundle. Moreover, in \cite[Theorem D]{KPS25}, the Gromov--Hausdorff convergence to an Einstein metric on the base of a torus bundle was proved under additional assumptions. \smallskip 

Finally, we emphasize that the compactness assumption in Theorem \ref{thm:MAIN} plays a fundamental role. However, to the best of our knowledge, no examples are known of ancient homogeneous Ricci flows on non-compact manifolds other than the trivial products of a flat factor with a compact ancient solution. The only general fact available so far is that, by \cite[Th{\'e}or{\`e}me 2]{BeBe78}, any non-compact ancient homogeneous Ricci flow on an aspherical manifold must be flat, and therefore trivial. \medskip

The paper is organized as follows. Section \ref{sect:prel} collects background material on ancient homogeneous Ricci flows and on convergence in the absence of injectivity radius estimates. In Section \ref{sect:GHconv}, we establish a technical result on the Gromov--Hausdorff convergence of compact homogeneous spaces (see Theorem \ref{thm:equivGH1}) and we state a symmetrization result for collapsed homogeneous metrics (see Theorem \ref{thm:equivGH2}). In Section \ref{sect:smoothconv}, we prove a local $\mathcal{C}^{\infty}$-convergence result in the presence of collapse (see Theorem \ref{thm:equivGH3}), from which Theorem \ref{thm:MAIN'} follows. In Section \ref{sect:main}, we apply Theorem \ref{thm:MAIN'} to prove Theorem \ref{thm:last}, which in turn implies Theorem \ref{thm:MAIN}. Finally, Appendix \ref{sect:appendix} contains a proof of the rigidity of locally homogeneous gradient Ricci solitons, and Appendix \ref{sect:appendix2} provides a proof of Theorem \ref{thm:equivGH2}.
\medskip

\noindent {\itshape Acknowledgements.\ } The authors would like to thank Christoph B{\"o}hm, Ramiro Lafuente and John Lott for helpful discussions.

\medskip
\section{Preliminaries}
\label{sect:prel} \setcounter{equation} 0

Throughout this paper, we adopt the following notation for the geometric data associated with a Riemannian manifold $(M,g)$:
\begin{itemize}
\item[$\bcdot$] $D^g$ denotes the Levi-Civita connection;
\item[$\bcdot$] $|\cdot|_g$ denotes the induced norm on tensors;
\item[$\bcdot$] $\mathrm{inj}_p(M,g)$ denotes the injectivity radius at $p$, $\mathrm{inj}(M,g)$ denotes the injectivity radius;
\item[$\bcdot$] $\mathtt{d}_g$ denotes the Riemannian distance function;
\item[$\bcdot$] $\mathscr{B}^M_g(p,r) = \{x \in M : \mathtt{d}_g(x,p) < r\}$ denotes the geodesic ball centered at $p \in M$ of radius $r>0$.
\end{itemize}
In the special case where $(M,g) = (\mathbb{R}^m, \langle \cdot, \cdot \rangle)$ is the flat Euclidean space, we denote by $|\cdot|$ the induced norm and by $\mathscr{B}^{\mathbb{R}^m}(p,r)$ the corresponding geodesic balls. \smallskip

\subsection{Ancient homogeneous Ricci flows} \hfill \par

Let $M^m = \mathsf{G}/\mathsf{H}$ be the quotient of a compact, connected, non-abelian Lie group $\mathsf{G}$ by a closed subgroup $\mathsf{H} \subset \mathsf{G}$. Up to passing to a finite cover, we can assume that $\mathsf{H}$ is also connected. We further assume that the $\mathsf{G}$-action on $M$ is {\it almost effective}, i.e., the set of elements of $\mathsf{G}$ acting trivially on $M$ is discrete. We denote by $\mathscr{M}^{\mathsf{G}}$ the space of $\mathsf{G}$-invariant metrics on $M$. \smallskip

The Ricci flow  
$$
\partial_t g(t) = -2\,\mathrm{Ric}(g(t))
$$
preserves $\mathsf{G}$-invariance and therefore reduces to a finite-dimensional dynamical system on $\mathscr{M}^{\mathsf{G}}$. By \cite[Theorem 2]{Boe15}, any homogeneous Ricci flow on the compact homogeneous space $M$ becomes extinct in finite forward time, so the long-time behavior analysis of the flow reduces to the study of ancient solutions as $t \to -\infty$. Up to a time shift, we assume that any ancient solution has maximal interval of existence $(-\infty,0)$.

By \cite[Corollary 2]{BLS19}, any ancient homogeneous Ricci flow $g(t)$ is of Type-I. More precisely, there exists a constant $C>1$, depending only on the dimension $m$ and on $\mathrm{scal}(g(-1))$, such that
\begin{equation} \label{eq:typeI}
\big|\mathrm{Rm}(g(t))\big|_{g(t)} \cdot |t| \in [C^{-1},C] \quad \text{for all $t \leq -1$} \,\, .
\end{equation}
Moreover, the behavior of its scalar curvature $\mathrm{scal}(g(t))$ determines the geometry of $g(t)$ as $t \to -\infty$. Indeed, the quantity
$$
F(g) \coloneqq \mathrm{vol}(M,g)^{\frac2m}\mathrm{scal}(g)
$$
is scale-invariant and monotonically non-decreasing along the homogeneous Ricci flow:
$$
\frac{\mathrm{d}}{\mathrm{d} t}F(g(t)) = 2\mathrm{vol}(M,g(t))^{\frac2m} \Big( \big|\mathrm{Ric}(g(t))\big|_{g(t)}^2 -\tfrac1m\mathrm{scal}(g(t))^2\Big) \geq 0 \,\, ,
$$
with equality if and only if $g(t)$ evolves through Einstein metrics. Moreover, if $g(t)$ is ancient, then $\mathrm{scal}(g(t))>0$ (see, e.g., \cite[p.\ 102]{CLN06}). Thus, two distinct scenarios arise (see, e.g., \cite[Remark 5.3]{BLS19}):
\begin{itemize}
\item[$a)$] if $F(g(t))$ is bounded away from zero, then $g(t)$ is non-collapsed;
\item[$b)$] if $F(g(t)) \to 0$ as $t \to -\infty$, then the solution is collapsed.
\end{itemize}
\smallskip

Here, we recall that an ancient Ricci flow $g(t)$ is said to be {\it non-collapsed} if the injectivity radius of the curvature normalized solution $\tfrac1{|t|}g(t)$ is bounded away from zero as $t \to -\infty$, and it is said to be {\it collapsed} otherwise (see, e.g., \cite[Definition 6.44, Lemma 6.54]{CCGGIIKLLN07}). \smallskip

In case $a)$, the solution emanates from an unstable Einstein metric on the same manifold (see \cite[Theorem 4.2]{BLS19}). Case $b)$ is however not fully understood, except in special instances (see \cite[Theorem D]{KPS25}). By \cite{BWZ04} and \cite[Remark 5.13]{BLS19}, $M$ admits a collapsed ancient solution only if it is the total space of a (possibly locally defined) homogeneous torus bundle
$$
\mathsf{T} \to \mathsf{G}/\mathsf{H} \to \mathsf{G}/\mathsf{H}\mathsf{T} \,\, .
$$
Here, $\mathsf{T}$ is a connected, abelian subgroup of a compact complement of $\mathsf{H}$ inside its normalizer $N_{\mathsf{G}}(\mathsf{H})$. Recall that the quotient $N_{\mathsf{G}}(\mathsf{H})/\mathsf{H}$ acts on $M = \mathsf{G}/\mathsf{H}$ by right multiplication and is naturally isomorphic to the group of $\mathsf{G}$-equivariant diffeomorphisms of $M$ (see \cite[Corollary I.4.3]{Br72}). Moreover, the identity component $(N_{\mathsf{G}}(\mathsf{H})/\mathsf{H})_0$ can be identified with a compact subgroup of $\mathsf{G}$ (see \cite[Lemma 4.7]{Boe04}). For later use, we make the following observation.

\begin{proposition} \label{prop:flatorb}
Let $g$ be a $\mathsf{G}$-invariant metric on $M = \mathsf{G}/\mathsf{H}$ and let $\mathsf{T} \subset N_{\mathsf{G}}(\mathsf{H})/\mathsf{H}$ be a torus. Then, the right $\mathsf{T}$-orbits on $M$ are flat with respect to the metric induced by $g$.
\end{proposition}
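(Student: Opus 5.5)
The plan is to show that each right $\mathsf{T}$-orbit, with the metric induced by $g$, is isometric to a flat torus $\mathsf{T}/\Lambda$ with a bi-invariant metric. Fix a point $x = a\mathsf{H} \in M$. Since the right $\mathsf{T}$-action commutes with the left $\mathsf{G}$-action and $\mathsf{G}$ acts transitively by isometries, it suffices to treat the single orbit $x\cdot\mathsf{T}$: left multiplication by $b a^{-1}$ maps $x\cdot\mathsf{T}$ isometrically onto $(ba^{-1}x)\cdot\mathsf{T}$.

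The key step is that the right $\mathsf{T}$-action preserves the \emph{induced} metric on the orbit, even though it need not preserve $g$ on all of $M$. Lift $\mathsf{T}$ to a torus in the compact complement $\mathsf{K} \subset N_{\mathsf{G}}(\mathsf{H})$, using the identification of $(N_{\mathsf{G}}(\mathsf{H})/\mathsf{H})_0$ with a compact subgroup of $\mathsf{G}$ recalled above; I keep calling it $\mathsf{T}$. For $s \in \mathsf{T}$, the right action $R_s\colon a\mathsf{H}\mapsto as\mathsf{H}$ is well defined because $s$ normalizes $\mathsf{H}$. On the orbit $a\mathsf{T}\mathsf{H}$, right multiplication by $s$ coincides with left multiplication by $asa^{-1}$:
\[
a t\mathsf{H} \cdot s = a ts\mathsf{H} = a s t \mathsf{H} = (a s a^{-1})\,(a t \mathsf{H}) ,
\]
where the middle equality uses that $\mathsf{T}$ is abelian. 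Left multiplication by $asa^{-1}\in\mathsf{G}$ is a global isometry of $g$ and preserves the orbit, so its restriction to the orbit is an isometry of the induced metric.

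It follows that the orbit $\mathcal{O} = x\cdot\mathsf{T}$, being a compact connected submanifold, is acted on transitively by the abelian group $\mathsf{T}$ through isometries of the induced metric $g_{\mathcal{O}}$. The action has isotropy some closed subgroup $\mathsf{S}$, so $\mathcal{O}\cong \mathsf{T}/\mathsf{S}$, which is again a torus. The induced metric is invariant under this transitive abelian group, hence is a left-invariant metric on the abelian Lie group $\mathsf{T}/\mathsf{S}$. Every left-invariant metric on an abelian Lie group is bi-invariant and flat: the Levi-Civita connection satisfies $\nabla_XY = \tfrac12[X,Y] = 0$ on invariant fields by Koszul's formula, so the curvature vanishes. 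This proves the proposition.

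The only delicate point is the lifting step. If $\mathsf{T}$ is merely a torus in $N_{\mathsf{G}}(\mathsf{H})/\mathsf{H}$, the commutation $ts\mathsf{H} = st\mathsf{H}$ holds already modulo $\mathsf{H}$, because the quotient group is abelian on $\mathsf{T}$. Writing $ts = st h$ with $h\in\mathsf{H}$ handles the identity $a ts\mathsf{H} = ast\mathsf{H}$ directly, so the argument survives without choosing a lift. I would present it in this lift-free form to avoid relying on \cite[Lemma 4.7]{Boe04}.
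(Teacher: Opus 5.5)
Your argument is correct and is essentially the paper's. The paper notes that the right $\mathsf{T}$-orbit through $e\mathsf{H}$ equals the left $\mathsf{T}$-orbit, so it is a homogeneous space of the abelian group $\mathsf{T}$ with invariant, hence flat, induced metric; it then transports this to every other orbit via the commuting left $\mathsf{G}$-action, whereas you make the same identification directly at an arbitrary point through $s \mapsto a s a^{-1}$ (which makes your initial reduction to a single orbit redundant). Your lift-free variant, writing $ts = sth$ with $h \in \mathsf{H}$, is a valid minor refinement that avoids identifying $(N_{\mathsf{G}}(\mathsf{H})/\mathsf{H})_0$ with a subgroup of $\mathsf{G}$.
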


\begin{proof}
We notice that the right $\mathsf{T}$-orbit through the origin $e\mathsf{H} \cdot \mathsf{T}$ in $M$ coincides with the corresponding left orbit $\mathsf{T} \cdot e\mathsf{H}$. The left orbit $\mathsf{T} \cdot e\mathsf{H}$ with the metric induced by $g$ is a Riemannian homogeneous space $(\mathsf{H}\mathsf{T}/\mathsf{H} = \mathsf{T},g|_{\mathsf{T} \cdot e\mathsf{H}})$, and so it is flat because $\mathsf{T}$ is abelian. Moreover, since the left action of $\mathsf{G}$ is transitive and commutes with the right action of $\mathsf{T}$, it follows that the right $\mathsf{T}$-orbits are pairwise isometric via the ambient action of $\mathsf{G}$ on $(M,g)$. Therefore, every right $\mathsf{T}$-orbit in $(M,g)$ is flat.
\end{proof}

\subsection{Convergence in the absence of injectivity radius estimates} \label{subsect:conv} \hfill \par

Following \cite{BLS19, Ped22}, we recall the following definition.

\begin{definition} \label{def:geom-mod}
A {\it geometric model} consists of a smooth locally homogeneous Riemannian manifold $(\mathbb{E}, \tilde{g})$ and a distinguished point $o \in \mathbb{E}$, called the {\it origin}, such that
$$
\mathscr{B}^{\mathbb{E}}_{\tilde{g}}(o,\pi) = \mathbb{E} \,\, , \quad |{\rm sec}(\tilde{g})| \leq 1 \,\, , \quad {\rm inj}_o(\mathbb{E}, \tilde{g})=\pi \,\, .
$$
\end{definition}

By \cite[Theorem A]{Ped22}, any locally homogeneous Riemannian manifold $(M,g)$ with $|{\rm sec}(g)| \leq 1$ is locally isometric to a geometric model $(\mathbb{E}, \tilde{g})$, which is unique up to isometry. In this case, we refer to $(\mathbb{E}, \tilde{g})$ as the {\it geometric model of $(M,g)$.} If $(M,g)$ is globally homogeneous, its geometric model can be constructed explicitly. Indeed, fix a point $p \in M$, choose a $g_p$-orthonormal frame $u: \mathbb{R}^m \to T_pM$, and define
\begin{equation} \label{eq:geom-mod}
\mathbb{E} \coloneqq \mathscr{B}^{\mathbb{R}^m}(0,\pi) = \{x \in \mathbb{R}^m : |x| < \pi \} \,\, , \quad o \coloneqq 0 \in \mathbb{R}^m \,\, , \quad f \coloneqq \mathrm{Exp}^g_p \circ u|_{\mathbb{E}} \,\, , \quad \tilde{g} \coloneqq \varphi^*g \,\, ,
\end{equation}
where $\mathrm{Exp}^g_p$ denotes the exponential map at $p$ of $(M,g)$. Then, $(\mathbb{E}, \tilde{g})$ is a geometric model with origin $o$ and $\varphi: (\mathbb{E}, \tilde{g}) \to (M, g)$ is a local isometry. We remark that, by Definition \ref{def:geom-mod}, we may always assume that the underlying manifold $\mathbb{E}$ of a geometric model is the standard Euclidean ball $\mathbb{E} = \mathscr{B}^{\mathbb{R}^m}(0,\pi)$ and that $g$ is radially isometric to $\langle \cdot, \cdot \rangle$.
\smallskip

Geometric models provide a natural framework for defining the convergence of homogeneous spaces when no injectivity radius estimates are available (see \cite[Theorem B, Corollary 3.8]{Ped22}). This approach was initiated in the works of Glickenstein and Lott for the study of limits of collapsed Ricci flows. More precisely, in \cite[Theorem 3]{Gli03}, a limit flow was constructed on a ball in a single tangent space. Later, in \cite[Theorem 1.4]{Lott07}, Riemannian groupoids were introduced to assemble such local limits across different tangent spaces. 

The following definition makes precise the notion of convergence of geometric models (c.f. \cite[Definition 2.1]{Ped22}).

\begin{definition}
Let $(M^{(n)},g^{(n)})$ be a sequence of Riemannian homogeneous spaces and denote by $(\mathbb{E},\tilde{g}^{(n)})$ the corresponding sequence of geometric models. The manifolds $(M^{(n)},g^{(n)})$ are said to {\it converge locally in the $\mathcal{C}^{\infty}$-topology} to a Riemannian locally homogeneous space $(M^{(\infty)},g^{(\infty)})$ if the manifolds $(\mathbb{E},\tilde{g}^{(n)})$ converge to the geometric model $(\mathbb{E},\tilde{g}^{(\infty)})$ of $(M^{(\infty)},g^{(\infty)})$ in the following sense: for every $0 < \delta < \pi$, there exist an integer $\bar{n} = \bar{n}(\delta) \in \mathbb{N}$ and smooth embeddings $\phi^{(n)}: \overline{\mathscr{B}^{\mathbb{E}}_{\tilde{g}^{(\infty)}}(o,\pi-\delta)} \to \mathbb{E}$, for all $n \geq \bar{n}$, such that $\phi^{(n)}(o) = o$, and $(\phi^{(n)})^*\tilde{g}^{(n)} \to \tilde{g}^{(\infty)}$ smoothly on $\overline{\mathscr{B}^{\mathbb{E}}_{\tilde{g}^{(\infty)}}(o,\pi-\delta)}$ as $n \to +\infty$.
\end{definition}

\begin{remark}
The above notion of convergence is closely related to the convergence in the sense of Riemannian groupoids introduced in \cite{Lott07}. Indeed, the {\`e}tale groupoid $\mathcal{G} = (\mathcal{G}_0, \mathcal{G}_1)$ of a Riemannian manifold $(M,g)$, with $|{\rm sec}(g)| \leq 1$, consists of the disjoint union $\mathcal{G}_0$ of a set of tangent balls of radius $\pi$, with the corresponding pullback metrics via the exponential map, and a set $\mathcal{G}_1$ of morphisms containing gluing information (see \cite[Example 5.7]{Lott07}). Convergence of a sequence $\mathcal{G}^{(n)}=(\mathcal{G}^{(n)}_0,\mathcal{G}^{(n)}_1)$ requires both smooth convergence of the Riemannian manifolds $\mathcal{G}^{(n)}_0$, and Hausdorff convergence of the sets $\mathcal{G}^{(n)}_1$ (see \cite[Definition 5.8]{Lott07}). If $(M,g)$ is homogeneous, then $\mathcal{G}_0$ is the disjoint union of a set of locally homogenous Riemannian manifolds that are isometric to the geometric model of $(M,g)$. Moreover, the local convergence in the $\mathcal{C}^{\infty}$-topology of a sequence $(M^{(n)},g^{(n)})$ is equivalent to the smooth convergence of the corresponding $\mathcal{G}^{(n)}_0$. 
\end{remark}

The corresponding notion of convergence for ancient Ricci flows reads as follows.

\begin{definition} \label{def:convloc}
Let $(M^{(n)},g^{(n)}(t))$ be a sequence of ancient homogeneous Ricci flows and denote by $(\mathbb{E},\tilde{g}^{(n)}_*)$ the geometric models of $(M^{(n)},g^{(n)}(-1))$, with local isometry $f^{(n)}: (\mathbb{E},\tilde{g}^{(n)}_*\big) \to \big(M,g^{(n)}(-1))$ as in \eqref{eq:geom-mod}. The flows $(M^{(n)},g^{(n)}(t))$ are said to {\it converge locally in the $\mathcal{C}^{\infty}$-topology} to an ancient locally homogeneous Ricci flow $(M^{(\infty)},g^{(\infty)}(t))$ if the following condition is satisfied. For every $0 < \delta < \pi$, there exist an integer $\bar{n} = \bar{n}(\delta) \in \mathbb{N}$ and smooth embeddings $\phi^{(n)}: \overline{\mathscr{B}^{\mathbb{E}}_{g^{(\infty)}(-1)}(o,\pi-\delta)} \to \mathbb{E}$, for all $n \geq \bar{n}$, such that: \begin{itemize}
\item[$i)$] $\phi^{(n)}(o) = o$ and, as $n \to +\infty$, the metrics
$$
\mathrm{d}t^2 + (\phi^{(n)})^*\big((f^{(n)})^*(g^{(n)}(t))\big)
$$
converge smoothly to a limit metric
$$
\mathrm{d}t^2 + \tilde{g}^{(\infty)}(t)
$$
on $I \times \overline{\mathscr{B}^{\mathbb{E}}_{g^{(\infty)}(-1)}(o,\pi-\delta)}$, for every compact interval $I \subset (-\infty, 0)$;
\item[$ii)$] $(\mathbb{E}, \tilde{g}^{(\infty)}(t_o))$ is locally isometric to $(M^{(\infty)},g^{(\infty)}(t_o))$ for all $t_o < 0$.
\end{itemize}
\end{definition}

For blow-downs along an ancient homogeneous Ricci flow, which is necessarily Type-I by \cite[Corollary 2]{BLS19}, the compactness results established by Glickenstein and Lott imply the following.

\begin{theorem}[c.f. \cite{Gli03}, Theorem 3; \cite{Lott07}, Theorem 1.4; \cite{BLS19}, Corollary 2] \label{thm:Lottcompactness}
Let $(M,g(t))$ be an ancient homogeneous Ricci flow, let $\tau^{(n)} \to +\infty$ be a sequence of positive numbers and set $g^{(n)}(t) \coloneqq \frac1{\tau^{(n)}}g(\tau^{(n)}t)$. Then the sequence of blow-downs $(M,g^{(n)}(t))$ subconverges locally in the $\mathcal{C}^{\infty}$-topology to a limit locally homogeneous Ricci flow as $n \to +\infty$.
\end{theorem}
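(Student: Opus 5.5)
The plan is to combine the Type-I estimate \eqref{eq:typeI} with Shi-type derivative bounds. This gives uniform control on all covariant derivatives of the curvature along the rescaled flows. We then apply Glickenstein's and Lott's compactness theorems for Ricci flows without injectivity radius bounds on the geometric models.

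\textbf{Step 1: curvature bounds for the rescaled flows.} Since $|\mathrm{Rm}(g(t))|_{g(t)} \leq C/|t|$ for $t \leq -1$, scaling gives
$$
\big|\mathrm{Rm}(g^{(n)}(t))\big|_{g^{(n)}(t)} \leq C/|t|
$$
for $t \leq -1/\tau^{(n)}$. Hence, for any compact interval $I=[a,b]\subset(-\infty,0)$ and $n$ large, the curvature is bounded by $C/|b|$ on $[2a,b]$. Shi's local derivative estimates, applied on the time interval $[2a,b]$, then give
$$
\big|(D^{g^{(n)}(t)})^k\mathrm{Rm}\big| \leq C_k(I)
$$
on $I$, uniformly in $n$. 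Alternatively, homogeneity allows one to run Shi's estimates as an ODE argument on $\mathscr{M}^{\mathsf{G}}$. Either way, after a further parabolic rescaling the sectional curvatures are at most $1$ in absolute value, so every time slice has a geometric model on $\mathbb{E}$.

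\textbf{Step 2: pulling back to a fixed ball.} We pull the flows back to $\mathbb{E}=\mathscr{B}^{\mathbb{R}^m}(0,\pi)$ via the fixed time-$(-1)$ local isometries $f^{(n)}$. The result is a sequence of Ricci flows $\tilde g^{(n)}(t)=(f^{(n)})^*g^{(n)}(t)$ on $\mathbb{E}$, defined on $I$. At $t=-1$ each of them has injectivity radius $\pi$ at $o$, and they have uniformly bounded curvature derivatives.

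The Ricci flow equation controls time derivatives. Together with the uniform equivalence of the metrics on $I$, this lets us apply \cite[Theorem 3]{Gli03}, or equivalently \cite[Theorem 1.4]{Lott07}, on the balls $\overline{\mathscr{B}(o,\pi-\delta)}$. These theorems produce embeddings $\phi^{(n)}$ fixing $o$ (harmonic or exponential coordinates at time $-1$) and, via Arzel\`a--Ascoli and a diagonal argument in $\delta\to0$ and $I\uparrow(-\infty,0)$, a smoothly convergent subsequence. The limit $\tilde g^{(\infty)}(t)$ solves the Ricci flow.

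\textbf{Step 3: local homogeneity of the limit.} This is the main obstacle. The limit must be locally homogeneous, and it must arise as a geometric model of some $(M^{(\infty)},g^{(\infty)}(t))$ at each time. I would handle it as in \cite{BLS19,Ped22}. Local homogeneity of $\tilde g^{(n)}(t)$ is encoded by Killing fields generating a transitive local action. Their norms and derivatives at $o$ are controlled by the curvature bounds, through the Killing equation $D^2X = -\mathrm{Rm}(X,\cdot)$. This yields limit Killing fields that still span $T\mathbb{E}$ near $o$, simultaneously for all $t$, since the flow preserves isometries.

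An alternative route to local homogeneity is Singer's theorem, since $\mathrm{Rm}$ and its covariant derivatives have constant norm and their algebraic invariants at all points converge. From the limit Killing algebra one then obtains a locally homogeneous structure. By \cite[Theorem A]{Ped22}, $(\mathbb{E},\tilde g^{(\infty)}(t_o))$ is then locally isometric to a locally homogeneous space, which yields condition $ii)$ of Definition~\ref{def:convloc}.
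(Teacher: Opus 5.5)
Your proposal is correct and is essentially the paper's argument, which the paper gives only through citations: the Type-I bound \eqref{eq:typeI} from \cite[Corollary 2]{BLS19}, together with Shi-type estimates, is fed into the Glickenstein--Lott compactness theorems on the geometric models at a fixed reference time. Two points need tightening. First, $|\mathrm{Rm}|\cdot|t|$ is invariant under parabolic rescaling, so the normalization $|\mathrm{sec}|\leq 1$ must come from the choice of reference time slice (or model radius), not from a rescaling. Second, in Step~3 the Killing equation only propagates $(X,DX)$ from $o$ and does not bound $DX|_o$, so local homogeneity of the limit should come either from \cite[Corollary 3.8]{Ped22} or from your Singer route, stated precisely as passing to the limit in linear isometries that match $(D^k\mathrm{Rm})_{k\leq K}$ at pairs of points (constant norms alone would not suffice).
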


\subsection{Preparatory definitions for convergence} \label{subsect:notation} \hfill \par

Let $(M^m,g)$ be a compact Riemannian manifold. If $T$ is a tensor field on $M$, we set
$$
\|T\|_{\mathcal{C}^0(M,g)} \coloneqq \max_{M} |T|_g \,\, , \quad \|T\|_{\mathcal{C}^k(M,g)} \coloneqq \sum_{i=0}^k  \Big\| \big|(D^g)^iT\big|_g \Big\|_{\mathcal{C}^0(M,g)} \,\, .
$$
Notice that, although the $\mathcal{C}^k$-norm $\|\cdot\|_{\mathcal{C}^k(M,g)}$ depends on the Riemannian metric $g$, the topology it induces on the space of tensor fields does not, since $M$ is compact.

\begin{definition} \label{def:boundedgeom}
Let $\{\Gamma_k\}$ be a sequence of positive numbers. If $(M,g)$ is a compact Riemannian manifold, we say that {\it $(M,g)$ has $\{\Gamma_k\}$-bounded geometry} if $\|\mathrm{Rm}(g)\|_{\mathcal{C}^k(M,g)} \leq \Gamma_k$ for every $k \geq 0$, where $\mathrm{Rm}(g)$ denotes the Riemann curvature tensor of $(M,g)$.
\end{definition}

We recall that, given a smooth map $f: (M, g) \to (N, h)$ between compact Riemannian manifolds, the {\it $k$-th covariant derivative of $f$} is the $f^*TN$-valued $(0,k)$-tensor field on $M$ defined recursively by
$$\begin{gathered}
\nabla^0f \coloneqq f \,\, , \quad \nabla^1f \coloneqq \mathrm{d}f \,\, , \\
\nabla^{k+1}f(X_0,X_1,{\dots},X_k) \coloneqq D^{h,f}_{f_*X_0}\big(\nabla^kf(X_1,{\dots},X_k)\big) -\sum_{i=1}^k\nabla^kf(X_1,{\dots},D^g_{X_0}X_i,{\dots},X_k) \,\, ,
\end{gathered}$$
where $D^{h,f}$ denotes the connection induced on $f^*TN$ by the Levi-Civita connection $D^h$ of $(N,h)$. Notice that the metrics $g$ and $h$ allow to measure the norm $|\nabla^kf|_{g,h}$ of $\nabla^kf$, and so we set
$$
\|f\|_{\mathcal{C}^k(M,g,h)} \coloneqq \sum_{i=0}^k  \Big\| \big|\nabla^kf\big|_{g,h} \Big\|_{\mathcal{C}^0(M,g)} \,\, .
$$
Again, although the $\mathcal{C}^k$-norm $\|f\|_{\mathcal{C}^k(M,g,h)}$ depends on both the Riemannian metrics $g$ and $h$, the topology it induces on the space of smooth maps from $M$ to $N$ does not, since both $M$ and $N$ are compact.

\begin{definition}
Let $\{\Gamma_k\}$ be a sequence of positive numbers. If  $f: (M, g) \to (N, h)$ is a smooth map between compact Riemannian manifolds, we say that {\it $f$ is $\{\Gamma_k\}$-bounded} if $\|f\|_{\mathcal{C}^k(M,g,h)} \leq \Gamma_k$ for every $k \geq 0$.
\end{definition}

For later use, we remark the following fact about convergence of maps.

\begin{remark} \label{rem:convf}
Let $f^{(n)}: (M^{(n)}, g^{(n)}) \to (N^{(n)}, h^{(n)})$ be a sequence of smooth maps between compact Riemannian manifolds. Assume that both $(M^{(n)}, g^{(n)})$ and $(N^{(n)}, h^{(n)})$ converge in the $\mathcal{C}^{\infty}$-topology to some compact Riemannian manifolds $(M^{(\infty)}, g^{(\infty)})$ and $(N^{(\infty)}, h^{(\infty)})$, respectively. Namely, there exist sequences of diffeomorphisms $\phi_1^{(n)} : M^{(\infty)} \to M^{(n)}$ and $\phi_2^{(n)} : N^{(\infty)} \to N^{(n)}$ such that $(\phi_1^{(n)})^*g^{(n)} \to g^{(\infty)}$ and $(\phi_2^{(n)})^*h^{(n)} \to h^{(\infty)}$ smoothly. Assume also that $f^{(n)}$ is $\{\Gamma_k\}$-bounded for all $n \in \mathbb{N}$, for some sequence of positive numbers $\{\Gamma_k\}$. Then, up to passing to a subsequence, by the Arzel\`a--Ascoli Theorem there exists a smooth map $f^{(\infty)}: (M^{(\infty)}, g^{(\infty)}) \to (N^{(\infty)}, h^{(\infty)})$ such that $(\phi_2^{(n)})^{-1} \circ f^{(n)} \circ \phi_1^{(n)} \to f^{(\infty)}$ smoothly as $n \to +\infty$. In this situation, we say that the maps $f^{(n)}$ {\it converge in the $\mathcal{C}^{\infty}$-topology} to $f^{(\infty)}$.
\end{remark}

We also recall the following two definitions. The first one is the notion of {\it Gromov--Hausdorff approximation}, that quantifies the distance between compact Riemannian manifolds in the Gromov--Hausdorff topology.

\begin{definition}
Let $\epsilon >0$ and let $f: (M^m,g) \to (B^{m-s},\check{g})$ be a (not necessarily continuous) map between compact Riemannian manifolds. The map $f$ is called {\it $\epsilon$-Gromov--Hausdorff approximation} if: \begin{itemize}
\item[$\bcdot$] $\big|\mathtt{d}_{g}(p, p') -\mathtt{d}_{\check{g}}(f(p), f(p'))\big| <  \epsilon$ for all $p, p' \in M$;
\item[$\bcdot$] for every $b \in B$ there exists $p \in M$ such that $\mathtt{d}_{\check{g}}(f(p), b) < \epsilon$.
\end{itemize}
\end{definition}

The second definition quantifies the failure of a given map to be a Riemannian submersion.

\begin{definition}
Let $\epsilon >0$ and let $f: (M^m,g) \to (B^{m-s},\check{g})$ be a smooth map between compact Riemannian manifolds. The map $f$ is called: \begin{itemize}
\item[$i)$] {\it $\epsilon$-almost Riemannian submersion} if
$$
(1- \epsilon) |v|_{g} \leq \big|\mathrm{d}f|_p(v)\big|_{\check{g}} \leq (1+ \epsilon) |v|_{g}
$$
for all $p \in M$ and $v \in {\rm ker}({\rm d}f|_p)^{\perp_{g}} \subset T_pM$;
\item[$ii)$] {\it $\epsilon$-almost Riemannian submersion in the $\mathcal{C}^2$-sense} \cite{NaTian18} if it is an $\epsilon$-almost Riemannian submersion and
$$
\big|\nabla^2f(v,w)\big|_{g,\check{g}} < \epsilon |v|_{g} |w|_{g}
$$
for all $p \in M$ and $v, w \in {\rm ker}({\rm d}f|_p)^{\perp_{g}} \subset T_pM$.
\end{itemize}
\end{definition}

\medskip
\section{Equivariant Gromov--Hausdorff convergence of compact homogeneous spaces}
\label{sect:GHconv} \setcounter{equation} 0

In this section, we characterize the Riemannian collapse of compact homogeneous spaces having uniformly bounded geometry and uniformly bounded diameter. For the sake of notation, given $D>0$ and a sequence $\{\Gamma_k\}$ of positive numbers, we set
\begin{equation} \label{eq:MGbound}
\mathscr{M}^{\mathsf{G}}\big(D,\{\Gamma_k\}\big) \coloneqq \big\{ g \in \mathscr{M}^{\mathsf{G}} : \text{$|\mathrm{sec}(g)| \leq 1$, $\mathrm{diam}(M,g) \leq D$, $(M,g)$ has $\{\Gamma_k\}$-bounded geometry} \big\} \,\, .
\end{equation}
Here the notion of $\{\Gamma_k\}$-bounded geometry has been introduced in Definition \ref{def:boundedgeom}. Moreover, we observe that the condition $|\mathrm{sec}(g)| \leq 1$ is just a convenient normalization, not an extra condition. \smallskip

We now establish the following precompactness result for $\mathscr{M}^{\mathsf{G}}\big(D,\{\Gamma_k\}\big)$.

\begin{theorem} \label{thm:equivGH1}
Let $M^m = \mathsf{G}/\mathsf{H}$ be a homogeneous space, with $\mathsf{H}$ and $\mathsf{G}$ compact, connected Lie groups. Let also $D >0$ be a constant and $\{\Gamma_k\}$ a sequence of positive numbers. Then, for every sequence $\{g^{(n)}\} \subset \mathscr{M}^{\mathsf{G}}\big(D,\{\Gamma_k\}\big)$, there exist, up to passing to a subsequence, a torus $\mathsf{T}^s \subset N_{\mathsf{G}}(\mathsf{H})/\mathsf{H}$, with $0 \leq s \leq m$, a $\mathsf{G}$-invariant Riemannian metric $\check{g}^{(\infty)}$ on $M/\mathsf{T}$, a sequence of positive numbers $\{\widetilde{\Gamma}_k\}$, depending only on the data $\big(m,s, D, \{\Gamma_k\},\mathrm{inj}(M/\mathsf{T},\check{g}^{(\infty)}),\mathsf{G}\big)$ and a sequence $\{\epsilon^{(n)}\}$ of positive numbers, with $\epsilon^{(n)} \to 0$ as $n \to +\infty$, such that, for all $n \in \mathbb{N}$, the canonical projection
$$
\pi: (M,g^{(n)}) \to (M/\mathsf{T},\check{g}^{(\infty)})
$$
is an $\epsilon^{(n)}$-Gromov--Hausdorff approximation, an $\epsilon^{(n)}$-almost Riemannian submersion in the $\mathcal{C}^2$-sense, and is $\{\widetilde{\Gamma}_k\}$-bounded.
\end{theorem}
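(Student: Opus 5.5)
We first pass to a subsequence along which $(M,g^{(n)})$ converges in the $\mathsf{G}$-equivariant Gromov--Hausdorff topology. The bounds $|\mathrm{sec}|\leq 1$ and $\mathrm{diam}\leq D$ make Gromov's precompactness theorem applicable. Since $\mathsf{G}$ is compact and acts by isometries, Fukaya's equivariant version applies, and we may assume that $(M,g^{(n)},\mathsf{G}) \to (X,\mathtt{d}_X,\mathsf{G})$, where $X$ is a compact Alexandrov space of curvature $\geq -1$ on which $\mathsf{G}$ acts transitively by isometries. Transitivity forces $X = \mathsf{G}/\mathsf{K}$ for some closed subgroup $\mathsf{K}$. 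An Alexandrov space that is homogeneous under a compact Lie group is a Riemannian manifold with a $\mathsf{G}$-invariant smooth metric $\check{g}^{(\infty)}$, by Berestovskii's theorem on homogeneous inner-metric spaces, because all its points are regular. Moreover $\mathsf{H}\subset\mathsf{K}$ up to conjugation: isotropy groups converge into isotropy groups under equivariant convergence, and passing to a further subsequence handles the conjugation.

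Next we identify $\mathsf{K}$. We invoke Fukaya's fibration theorem in its $\mathsf{G}$-equivariant form \cite[Theorem 12.1']{Fu90}, together with the $\mathcal{C}^2$-strengthening of \cite[Theorem 3.1, Theorem 3.2]{NaTian18}. This produces, for large $n$, $\mathsf{G}$-equivariant fibrations $f^{(n)}:(M,g^{(n)})\to(X,\check g^{(\infty)})$ that are $\epsilon^{(n)}$-Gromov--Hausdorff approximations and $\epsilon^{(n)}$-almost Riemannian submersions in the $\mathcal{C}^2$-sense, with fibres infranilmanifolds and second fundamental form controlled in terms of $m$, $\{\Gamma_k\}$ and $\mathrm{inj}(X,\check g^{(\infty)})$.

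$\mathsf{G}$-equivariance forces $f^{(n)}(e\mathsf{H})$ to be a point fixed by $\mathsf{H}$, so $f^{(n)}$ is the canonical map $\mathsf{G}/\mathsf{H}\to\mathsf{G}/\mathsf{K}^{(n)}$ with $\mathsf{H}\subset\mathsf{K}^{(n)}$. Its fibre is $\mathsf{K}^{(n)}/\mathsf{H}$, a homogeneous infranilmanifold. A compact homogeneous space that is an infranilmanifold of a compact group must be, up to finite cover, a torus. After passing to the finite cover permitted by the theorem (connectedness of $\mathsf{H}$ already being assumed), the identity component of $\mathsf{K}^{(n)}/\mathsf{H}$ is therefore a torus $\mathsf{T}^{(n)}$ lying in $N_{\mathsf{G}}(\mathsf{H})/\mathsf{H}$. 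There are only finitely many conjugacy classes of such subgroups up to the relevant equivalence, because the quotient is diffeomorphic to a fixed $X$. Passing to a subsequence, conjugating by elements of $N_{\mathsf G}(\mathsf H)$ and absorbing this into isometries, we may take $\mathsf{T}^{(n)}=\mathsf{T}$ fixed. Then $f^{(n)}=\pi$ is the canonical projection $M\to M/\mathsf{T}$, after identifying $X\cong M/\mathsf{T}$ equivariantly. The action of $\mathsf{T}$ is free because it is a right action of a subgroup of $N_{\mathsf{G}}(\mathsf{H})/\mathsf{H}$.

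It remains to prove the $\{\widetilde\Gamma_k\}$-bounds on $\pi$. Since $\pi$ is $\mathsf{G}$-equivariant, it suffices to bound $\nabla^k\pi$ at the origin. By Proposition \ref{prop:flatorb}, the fibres are flat tori. The horizontal and vertical parts of $\nabla^k\pi$ are expressed through the O'Neill tensors $A,T$ and their derivatives, together with the curvature of $\check g^{(\infty)}$, which is fixed. The O'Neill tensors are bounded via the Gauss and O'Neill equations combined with the $\mathcal{C}^2$-almost-submersion estimate. Higher derivatives follow inductively from the bounds on $(D^{g^{(n)}})^i\mathrm{Rm}$, since invariant tensors satisfy algebraic identities at one point. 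The main obstacle is the second step: arranging that the Fukaya fibration can be chosen to equal exactly the canonical $\mathsf{G}$-equivariant projection with fixed $\mathsf{T}$, rather than merely close to it. If a direct identification fails, we would instead use the equivariant fibration only to identify $\mathsf{K}$, and then derive the almost-submersion properties of $\pi$ itself from $\mathcal{C}^1$-closeness of $\pi$ to $f^{(n)}$, via the center-of-mass construction of \cite{ChFuGro92}.
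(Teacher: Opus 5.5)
Your Steps 1--2 coincide with the paper's: the equivariant Gromov--Hausdorff limit $\mathsf{G}/\mathsf{K}$ is smooth by Berestovskii, you take the equivariant Fukaya fibrations $f^{(n)}$ with the Naber--Tian bounds, and $f^{(n)}(a\mathsf{H}) = a\cdot\check{o}^{(n)}$ has torus fibre $\mathsf{K}^{(n)}/\mathsf{H}$.

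\textbf{The gap.} It lies in the passage from $f^{(n)}$ to the fixed projection $\pi$. Your justification (``finitely many conjugacy classes \dots because the quotient is diffeomorphic to a fixed $X$'') is unproved. Even granting it, it only gives some $\beta \in N_{\mathsf{G}}(\mathsf{H})$ with $\beta\mathsf{K}\beta^{-1} = \mathsf{K}^{(n)}$. That leaves a right translation by an element of $N_{\mathsf{G}}(\mathsf{K})$ on the base, which need not preserve $\check{g}^{(\infty)}$.

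What you need is $\beta^{(n)} \in N_{\mathsf{G}}(\mathsf{H}) \cap \alpha^{(n)}\mathsf{K}$, where $\alpha^{(n)}\cdot\check{o} = \check{o}^{(n)}$. You can get it as follows.
\begin{itemize}
\item[$\bcdot$] Choose $\alpha^{(n)} \to e$, as the paper does. Then $(\alpha^{(n)})^{-1}\mathsf{H}\alpha^{(n)}$ is a closed subgroup of $\mathsf{K}$ close to $\mathsf{H}$.
\item[$\bcdot$] By the Montgomery--Zippin theorem on neighbouring subgroups, together with connectedness of $\mathsf{H}$, there is $k^{(n)} \in \mathsf{K}$ with $\beta^{(n)} \coloneqq \alpha^{(n)}k^{(n)} \in N_{\mathsf{G}}(\mathsf{H})$.
\item[$\bcdot$] Then $f^{(n)} = \pi \circ R_{\beta^{(n)}}$ exactly, where $R_{\beta^{(n)}}(a\mathsf{H}) = a\beta^{(n)}\mathsf{H}$ is a $\mathsf{G}$-equivariant diffeomorphism.
\item[$\bcdot$] Consequently $\pi \colon (M,(R_{\beta^{(n)}}^{-1})^*g^{(n)}) \to (M/\mathsf{T},\check{g}^{(\infty)})$ is isometrically the map $f^{(n)}$. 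It inherits all three properties verbatim, the $\{\widetilde{\Gamma}_k\}$-bounds included.
\end{itemize}
Your O'Neill-tensor paragraph then becomes unnecessary. As written it would not work anyway: $\pi$ is not a Riemannian submersion onto $\check{g}^{(\infty)}$, and $g^{(n)}$ need not be $\mathsf{T}$-invariant at this stage.

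\textbf{Comparison with the paper, and why your fallback fails.} The paper keeps the metrics $g^{(n)}$. It writes locally $f^{(n)} = \rho^{(n)}\cdot\pi$ with $\rho^{(n)} \to e$ smoothly, and deduces the properties of $\pi$ from its closeness to $f^{(n)}$; this is essentially your $\mathcal{C}^1$-closeness fallback. It does give the Gromov--Hausdorff part. However, the first- and second-order estimates are in $g^{(n)}$-norms, which degenerate along the collapsing directions, whereas $\rho^{(n)} \to e$ is measured in a fixed metric.

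In fact, for the literal $g^{(n)}$ the conclusion can fail. Take $\mathsf{G} = SU(2)$ and $\mathsf{H} = \{e\}$. Let $g_{\delta}$ be the Berger metrics whose right $\mathsf{T} = \exp(\mathbb{R}X_1)$-orbits have length of order $\delta$. Set $g^{(n)} \coloneqq R_{\alpha_n}^*g_{\delta_n}$ with $\alpha_n = \exp(\theta_n X_2)$ and $\delta_n = \theta_n^2 \to 0$.
\begin{itemize}
\item[$\bcdot$] After a fixed rescaling, these metrics lie in $\mathscr{M}^{\mathsf{G}}(D,\{\Gamma_k\})$.
\item[$\bcdot$] $\mathsf{T}$ is the only torus for which $\pi$ can be a Gromov--Hausdorff approximation.
\item[$\bcdot$] Yet $\mathrm{d}\pi$ stretches suitable $g^{(n)}$-horizontal vectors by a factor of order $\theta_n/\delta_n \to \infty$.
\end{itemize}
So replacing $g^{(n)}$ by the $\mathsf{G}$-equivariantly isometric metrics $(R_{\beta^{(n)}}^{-1})^*g^{(n)}$, as your main route does, is necessary rather than a weakness. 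It is also harmless downstream, where only isometry invariants are used. Drop the $\mathcal{C}^1$-closeness fallback and make the identification argument above the core of your final step.
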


\begin{proof}
We divide the proof into three steps.
\smallskip

\noindent {\it Step 1: The manifolds $(M,g^{(n)})$ subconverge in the Gromov--Hausdorff topology to a smooth $\mathsf{G}$-homogenous space $(B,\check{g}^{(\infty)})$.}

Since $(M,g^{(n)})$ is a sequence of compact Riemannian $m$-manifolds with uniformly bounded curvature and diameter, up to passing to a subsequence, it converges in the Gromov--Hausdorff topology to a compact Alexandrov space $(B,\mathtt{d}^{(\infty)})$ of dimension $m-s$, for some $0 \leq s \leq m$, with curvature $\geq -1$ and diameter ${\rm diam}(B,\mathtt{d}^{(\infty)}) \leq D$ (see, e.g., \cite[Theorem 10.7.2]{BBI01} and the discussion following it). By \cite[Corollary, Sect.\ 6]{Gr81}, $(B,\mathtt{d}^{(\infty)})$ is homogeneous, i.e., for any pair of points $b_1, b_2 \in B$ there exists an isometry of $(B,\mathtt{d}^{(\infty)})$ mapping $b_1$ to $b_2$. Moreover, by \cite[Theorem 7]{Ber89}, $(B,\mathtt{d}^{(\infty)})$ is smooth, namely, $B$ admits a smooth manifold structure and a Riemannian metric $\check{g}^{(\infty)}$ such that $\mathtt{d}_{\check{g}^{(\infty)}} = \mathtt{d}^{(\infty)}$. If $s = 0$, this concludes the proof of the theorem. Therefore, from now on we assume $s >0$.

The compact Lie group $\mathsf{G}$ acts by isometries on each $(M,g^{(n)})$ and the diameter ${\rm diam}(M,g^{(n)})$ is uniformly bounded. Therefore, by \cite[Theorem 6.9]{Fu90}, up to passing to a subsequence, the convergence of $(M,g^{(n)})$ to $(B,\check{g}^{(\infty)})$ is {\it $\mathsf{G}$-equivariant} in the sense of \cite[Definition 6.8]{Fu90}. In particular, there exists an isometric action $\Theta : \mathsf{G} \times B \to B$ of the same Lie group $\mathsf{G}$ on the limit Riemannian manifold $(B,\check{g}^{(\infty)})$. We remark that $\Theta(a,\_)$ is smooth for all $a \in \mathsf{G}$ by the Myers-Steenrod Theorem \cite[Theorem 8]{MS39}, and so it follows that $\Theta$ is smooth by \cite[Theorem 4]{BM45}. For the sake of notation, we just write $\Theta(a,b) = a \cdot b$ for every $a \in \mathsf{G}$ and $b \in B$.
\smallskip

\noindent {\it Step 2: There exists a torus $\mathsf{T} \subset N_{\mathsf{G}}(\mathsf{H})/\mathsf{H}$ such that $B = M/\mathsf{T}$.}

By the equivariant version of Fukaya's Fiber Bundle Theorem \cite[Theorem 12.1, Theorem 12.1']{Fu90}, there exist a sequence $\{\epsilon^{(n)}\}$ of positive numbers, with $\epsilon^{(n)} \to 0$, and a sequence of smooth maps $f^{(n)}: (M,g^{(n)}) \to (B,g^{(\infty)})$ satisfying, for every $n \in \mathbb{N}$:
\begin{itemize}
\item[$\bcdot$] $f^{(n)}$ is a $\mathsf{G}$-equivariant bundle projection with infranil fibers;
\item[$\bcdot$] $f^{(n)}$ is an $\epsilon^{(n)}$-Gromov--Hausdorff approximation;
\item[$\bcdot$] $f^{(n)}$ is an $\epsilon^{(n)}$-almost Riemannian submersion.
\end{itemize}
Moreover, by the enhanced version \cite[Theorem 3.1, Theorem 3.2]{NaTian18}, up to changing $\epsilon^{(n)}$, there exists a sequence of positive numbers $\{\widetilde{\Gamma}_k\}$, depending only on the data $\big(m,s, D, \{\Gamma_k\},\mathrm{inj}(B,\check{g}^{(\infty)}),\mathsf{G}\big)$, such that the maps $f^{(n)}$ can be assumed to satisfy the following further properties for every $n \in \mathbb{N}$:
\begin{itemize}
\item[$\bcdot$] $f^{(n)}$ is $\{\widetilde{\Gamma}_k\}$-bounded;
\item[$\bcdot$] $f^{(n)}$ is an $\epsilon^{(n)}$-almost Riemannian submersion in the $\mathcal{C}^2$-sense.
\end{itemize}
Since $\mathsf{G}$ acts transitively on $M$ and the maps $f^{(n)}$ are $\mathsf{G}$-equivariant and surjective, it follows that the action $\Theta$ is transitive. Consider the sequence of points $\check{o}^{(n)} \coloneqq f^{(n)}(e\mathsf{H}) \in B$ and the corresponding isotropy groups $\mathsf{K}^{(n)} \coloneqq \{a \in \mathsf{G} : a \cdot \check{o}^{(n)} = \check{o}^{(n)}\}$. Notice that $\mathsf{H} \subset \mathsf{K}^{(n)}$ for all $n \in \mathbb{N}$, since
$$
\check{o}^{(n)} = f^{(n)}(e\mathsf{H}) = f^{(n)}(h \cdot e\mathsf{H}) = h \cdot f^{(n)}(e\mathsf{H}) = h \cdot \check{o}^{(n)}
$$
for every $h \in \mathsf{H}$. Moreover, the fibers of the $\mathsf{G}$-equivariant projection $f^{(n)}: M \to B$ are diffeomorphic to the quotient $\mathsf{K}^{(n)}/\mathsf{H}$. Since infranil manifolds are aspherical and $\mathsf{K}^{(n)}$ is compact, it follows that $\mathsf{K}^{(n)}/\mathsf{H}$ is diffeomorphic to a torus for every $n \in \mathbb{N}$.

Up to passing to a subsequence, we can assume that there exists $\check{o} \in B$ such that $\check{o}^{(n)} \to \check{o}$ as $n \to +\infty$. Define the isotropy group
$$
\mathsf{K} \coloneqq \{a \in \mathsf{G} : a \cdot \check{o} = \check{o}\}
$$
and observe that $B$ is $\mathsf{G}$-equivariantly diffeomorphic to the quotient $\mathsf{G}/\mathsf{K}$ via the map $a\mathsf{K} \mapsto a \cdot \check{o}$.

Fix a sequence $\{\alpha^{(n)}\} \subset \mathsf{G}$ such that $\alpha^{(n)} \cdot \check{o} = \check{o}^{(n)}$ for every $n \in \mathbb{N}$. Then, it follows that $\mathsf{K}^{(n)} = \alpha^{(n)}.\mathsf{K}.(\alpha^{(n)})^{-1}$ for every $n \in \mathbb{N}$. Moreover, we can choose $\alpha^{(n)}$ so that $\alpha^{(n)} \to e$ as $n \to +\infty$. Therefore, $(\alpha^{(n)})^{-1}.\mathsf{H}.\alpha^{(n)} \subset \mathsf{K}$ for all $n \in \mathbb{N}$, and the map $f^{(n)} : \mathsf{G}/\mathsf{H} \to \mathsf{G}/\mathsf{K}$ takes the form $f^{(n)}(a\mathsf{H}) = a\alpha^{(n)}\mathsf{K}$ for every $a \in \mathsf{G}$. Moreover, it follows that the quotient $\mathsf{K}/\mathsf{H}$ is diffeomorphic to a torus, i.e., $\mathsf{K} = \mathsf{H}\mathsf{T}$ for some torus $\mathsf{T}$ inside a compact complement of $\mathsf{H}$ within its normalizer $N_{\mathsf{G}}(\mathsf{H})$.
\smallskip

\noindent {\it Step 3: The canonical projection $\pi: M \to M/\mathsf{T}$ has the required properties.}

It remains to show that
$$
\pi: (M,g^{(n)}) \to (M/\mathsf{T},\check{g}^{(\infty)})
$$
is a $2\epsilon^{(n)}$-Gromov--Hausdorff approximation, a $2\epsilon^{(n)}$-almost Riemannian submersion in the $\mathcal{C}^2$-sense and $\{2\widetilde{\Gamma}_k\}$-bounded for every $n \in \mathbb{N}$.

Fix a biinvariant metric $Q$ on $\mathsf{G}$. Notice that, by compactness of $\mathsf{G}$ and by continuity of $\Theta$, we can pass to a subsequence so that
$$
\mathtt{d}_{\check{g}^{(\infty)}}\big(e\mathsf{K}, (a^{-1}.(\alpha^{(n)})^{-1}.a.\alpha^{(n)}) \cdot e\mathsf{K}\big) \leq \epsilon^{(n)} \quad \text{for all $a \in \mathsf{G}$ and $n \in \mathbb{N}$} \,\, .
$$
Therefore, for every $a_1, a_2 \in \mathsf{G}$, we get
\begin{align*}
\big|\mathtt{d}_{g^{(n)}}(a_1\mathsf{H},a_2\mathsf{H}) -&\mathtt{d}_{\check{g}^{(\infty)}}(\pi(a_1\mathsf{H}),\pi(a_2\mathsf{H}))\big| \\
&\leq \big|\mathtt{d}_{g^{(n)}}(a_1\mathsf{H},a_2\mathsf{H}) -\mathtt{d}_{\check{g}^{(\infty)}}(f^{(n)}(a_1\mathsf{H}),f^{(n)}(a_2\mathsf{H}))\big| \\
&\qquad \qquad +\big|\mathtt{d}_{\check{g}^{(\infty)}}(f^{(n)}(a_1\mathsf{H}),f^{(n)}(a_2\mathsf{H})) -\mathtt{d}_{\check{g}^{(\infty)}}(\pi(a_1\mathsf{H}),\pi(a_2\mathsf{H}))\big| \\
&\leq \epsilon^{(n)} + \big|\mathtt{d}_{\check{g}^{(\infty)}}(a_1.\alpha^{(n)}.e\mathsf{K},a_2.\alpha^{(n)}.e\mathsf{K}) -\mathtt{d}_{\check{g}^{(\infty)}}(a_1.e\mathsf{K},a_2.e\mathsf{K})\big| \\
&\leq \epsilon^{(n)} + \big|\mathtt{d}_{\check{g}^{(\infty)}}(e\mathsf{K},(\alpha^{(n)})^{-1}.a_1^{-1}.a_2.\alpha^{(n)}.e\mathsf{K}) -\mathtt{d}_{\check{g}^{(\infty)}}(e\mathsf{K},a_1^{-1}.a_2.e\mathsf{K})\big| \\
&\leq \epsilon^{(n)} + \mathtt{d}_{\check{g}^{(\infty)}}(e\mathsf{K},(a_1^{-1}.a_2)^{-1}.(\alpha^{(n)})^{-1}.a_1^{-1}.a_2.\alpha^{(n)}.e\mathsf{K}) \\
&\leq 2\epsilon^{(n)} \,\, ,
\end{align*}
which shows that $\pi$ is a $2\epsilon^{(n)}$-Gromov--Hausdorff approximation. \smallskip

By compactness of $M$ and by the properties of $f^{(n)}$ (see Step 2 above), in order to prove that $\pi$ is a $2\epsilon^{(n)}$-almost Riemannian submersion in the $\mathcal{C}^2$-sense and $\{2\widetilde{\Gamma}_k\}$-bounded for every $n \in \mathbb{N}$, it is sufficient to show that every point $p^* \in M$ admits a neighborhood $\mathscr{U} \subset M$ with the following property: there exists a sequence of smooth functions $\rho^{(n)} : \mathscr{U} \to \mathsf{G}$ such that $f^{(n)}(p) = \rho^{(n)}(p).\pi(p)$ for every $p \in \mathscr{U}$ that converges smoothly to the constant map $\imath : \mathscr{U} \to \mathsf{G}$, $\imath(p) \coloneqq e$, as $n \to +\infty$.

Fix then a point $p^* \in M$ and $a^* \in \mathsf{G}$ such that $p^* = a^*\mathsf{H}$. Take a slice $\mathscr{S} \subset \mathsf{G}$ through $a^*$ for the right action of $\mathsf{H}$ by right translations and set $\mathscr{U} \coloneqq \{a\mathsf{H} : a \in \mathscr{S}\} \subset M$. This defines a smooth local section
$$
\sigma: \mathscr{U} \to \mathsf{G} \,\, , \quad \sigma(a\mathsf{H}) \coloneqq a
$$
for the principal bundle $\mathsf{G} \to \mathsf{G}/\mathsf{H}$. Define now the maps
\begin{align}
\gamma^{(n)} : \mathsf{G} \to \mathsf{G} \,\, &, \quad \gamma^{(n)}(a) \coloneqq a . \alpha^{(n)} . a^{-1} \,\, , \label{gamma_n} \\ 
\rho^{(n)} : \mathscr{U} \to \mathsf{G} \,\, &, \quad \rho^{(n)} \coloneqq \gamma^{(n)} \circ \sigma \,\, . \label{eq:rho_n}
\end{align}
Then, by definition,
\begin{equation} \label{eq:phi-pi}
f^{(n)}(p) = \rho^{(n)}(p).\pi(p) \quad \text{for every $p \in \mathscr{U}$} \,\, .
\end{equation}
Since the metric $Q$ is biinvariant, by \eqref{gamma_n} we get
$$
\mathtt{d}_{Q}(\gamma^{(n)}(a), e) = \mathtt{d}_{Q}(\alpha^{(n)}, e) \quad \text{for all $a \in \mathsf{G}$ and $n \in \mathbb{N}$} \,\, .
$$
Therefore, since $\alpha^{(n)} \to e$ as $n \to +\infty$, it follows from that $\gamma^{(n)}$ converges in the $\mathcal{C}^{0}$-topology to the constant map $\mathsf{G} \to \mathsf{G}$, $a \mapsto e$, as $n \to +\infty$. In fact, it is routine to check that the convergence takes place in the $\mathcal{C}^{\infty}$-topology. Therefore, it follows by \eqref{eq:rho_n} that $\rho^{(n)}$ converges to the constant map
$$
\imath : \mathscr{U} \to \mathsf{G} \,\, , \quad \imath(p) \coloneqq e
$$
in the $\mathcal{C}^{\infty}$-topology as $n \to +\infty$.
\end{proof}

As a byproduct of the proof of Theorem \ref{thm:equivGH1}, we obtain the following convergence result which does not assume higher order derivative bounds.

\begin{corollary}
Let $M^m=\mathsf{G}/\mathsf{H}$ be a compact, connected homogeneous space and let $D>0$ be a constant. Then every sequence $\{g^{(n)}\} \subset \mathscr{M}^{\mathsf{G}}$ satisfying $|{\rm sec}(g^{(n)})|\leq 1$ and ${\rm diam}(M,g^{(n)}) \leq D$ for all $n\in\mathbb{N}$ subconverges in the Gromov--Hausdorff topology to an invariant metric on $\mathsf{G}/\mathsf{K}$, for some closed intermediate subgroup $\mathsf{H}\subset \mathsf{K} \subset \mathsf{G}$ such that $\mathsf{K}/\mathsf{H}$ is a torus.
\end{corollary}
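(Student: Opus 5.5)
The plan is to rerun Steps 1 and 2 of the proof of Theorem \ref{thm:equivGH1}, keeping only the parts that use $|\mathrm{sec}|\leq 1$ and the diameter bound. The enhanced estimates of \cite{NaTian18} are the only ingredient that needs higher derivative bounds, and they are not needed here. As in Section \ref{sect:prel}, I first reduce to $\mathsf{H}$ connected by passing to a finite cover; at the end I will check that the conclusion descends to $\mathsf{G}/\mathsf{H}$.

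First, I use Gromov's precompactness \cite[Theorem 10.7.2]{BBI01}. After passing to a subsequence, $(M,g^{(n)})$ converges in the Gromov--Hausdorff sense to a compact Alexandrov space $(B,\mathtt{d}^{(\infty)})$ with curvature $\geq -1$ and dimension $m-s$. By \cite{Gr81} this space is homogeneous, and by \cite[Theorem 7]{Ber89} it is a smooth Riemannian manifold $(B,\check{g}^{(\infty)})$. Next, \cite[Theorem 6.9]{Fu90} lets me pass to a further subsequence on which the convergence is $\mathsf{G}$-equivariant. This gives a limit isometric action of $\mathsf{G}$ on $B$, which is smooth by \cite{MS39,BM45}. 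If $s=0$, the limit is $\mathsf{G}/\mathsf{H}$ with $\mathsf{K}=\mathsf{H}$, and the statement holds trivially.

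For $s>0$, I apply the equivariant fibration theorem \cite[Theorem 12.1']{Fu90}. It needs only bounded sectional curvature, together with the smooth limit and its positive injectivity radius. It produces $\mathsf{G}$-equivariant fibrations $f^{(n)}:M\to B$ with infranil fibers that are $\epsilon^{(n)}$-Gromov--Hausdorff approximations. Transitivity of $\mathsf{G}$ on $M$ and surjectivity of $f^{(n)}$ make the limit action transitive, so $B\cong \mathsf{G}/\mathsf{K}$ with $\mathsf{K}$ the stabilizer of a limit point $\check o$. The stabilizers $\mathsf{K}^{(n)}$ of $f^{(n)}(e\mathsf{H})$ contain $\mathsf{H}$, and each fiber $\mathsf{K}^{(n)}/\mathsf{H}$ is a compact homogeneous infranil manifold. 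Such a manifold is aspherical and is a compact homogeneous space of a compact group, so it is a torus. Moreover $\mathsf{K}^{(n)}$ is conjugate to $\mathsf{K}$ by elements $\alpha^{(n)}\to e$. I then rename the base point so that $\mathsf{H}\subset\mathsf{K}$ with $\mathsf{K}/\mathsf{H}$ a torus; this is exactly the conjugation argument of Step 2.

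The remaining point is that the limit metric is realized as an invariant metric on $\mathsf{G}/\mathsf{K}$ for this $\mathsf{K}$. The identification $B\cong\mathsf{G}/\mathsf{K}$ is $\mathsf{G}$-equivariant, and $\check{g}^{(\infty)}$ is $\mathsf{G}$-invariant, so this holds. Step 3 of the earlier proof also shows that the canonical projection $\mathsf{G}/\mathsf{H}\to\mathsf{G}/\mathsf{K}$ itself is a $2\epsilon^{(n)}$-GH approximation, and that estimate uses only the GH property of $f^{(n)}$ and $\alpha^{(n)}\to e$. The main obstacle I expect is descending from the finite cover back to the original $\mathsf{H}$. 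If $\mathsf{H}_0\subset\mathsf{K}'$ is the subgroup obtained on the cover, I would take $\mathsf{K}\coloneqq\mathsf{H}\mathsf{K}'$. This is possible because, after conjugation, $\mathsf{H}$ normalizes $\mathsf{K}'$, as the deck group acts isometrically and commutes with the $\mathsf{G}$-action. Then $\mathsf{K}/\mathsf{H}$ is a quotient of the torus $\mathsf{K}'/\mathsf{H}_0$ by a finite group, hence again a torus, and the limit of the original sequence, which is covered by the limit on the cover, is an invariant metric on $\mathsf{G}/\mathsf{K}$.
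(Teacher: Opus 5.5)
Your proposal is correct and is essentially the paper's proof: you rerun Steps 1 and 2 of the proof of Theorem~\ref{thm:equivGH1} using only Fukaya's equivariant convergence and fibration theorems, and drop the refinements from \cite{NaTian18}. The finite-cover detour is superfluous, because Step 2 never uses connectedness of $\mathsf{H}$: the fiber $\mathsf{K}^{(n)}/\mathsf{H}$ is a connected, aspherical compact homogeneous space of a compact group, hence a torus. If you keep the detour, note that your inference ``quotient of a torus by a finite group, hence a torus'' holds here only because of this homogeneity, not in general (the Klein bottle is a counterexample).
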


\begin{proof}
Consider a sequence of metrics $\{g^{(n)}\} \subset \mathscr{M}^{\mathsf{G}}$, with $|{\rm sec}(g^{(n)})| \leq 1$ and ${\rm diam}(M, g^{(n)}) \leq D$. Even though $\{g^{(n)}\}$ has no higher order uniform bounds, the claim follows by suitably modifying the proof of Theorem \ref{thm:equivGH1}. Indeed, Step 1 applies without any changes. Moreover, in Step 2, we can still apply results from \cite{Fu90}, without the enhanced version given in \cite{NaTian18}, to show that $B = \mathsf{G}/\mathsf{K}$ for some closed intermediate subgroup $\mathsf{H} \subset \mathsf{K} \subset \mathsf{G}$ such that $\mathsf{K}/\mathsf{H}$ is a torus.
\end{proof}

We now recall the following technical result from \cite{ChFuGro92}, which has found numerous applications in the literature on collapsed Riemannian manifolds (see, e.g., \cite[Theorem 2.1]{Rong96}, \cite[Theorem 2]{Lott02}, \cite[Theorem 2]{Lott'02}, \cite[Section 2]{Sin16}, \cite[Section 4]{NaTian18}). It establishes estimates for the averaging process of collapsing metrics under the right action of $\mathsf{T}$.

\begin{theorem}[c.f.\ \cite{ChFuGro92}, Proposition 4.9] \label{thm:equivGH2}
Let $M^m = \mathsf{G}/\mathsf{H}$ be a homogeneous space, with $\mathsf{H}$ and $\mathsf{G}$ compact, connected Lie groups. Let also $D >0$ be a constant and $\{\Gamma_k\}$ a sequence of positive numbers. Fix a sequence $\{g^{(n)}\} \subset \mathscr{M}^{\mathsf{G}}\big(D,\{\Gamma_k\}\big)$ and let the data $\big(\mathsf{T}, \check{g}^{(\infty)},\{\epsilon^{(n)}\}\big)$ be as in the conclusion of Theorem \ref{thm:equivGH1}. Consider the averaged metrics
\begin{equation} \label{eq:averaging}
g^{(n)}_{\mathsf{T}} \coloneqq \int_{\mathsf{T}} \big(f^*g^{(n)}\big)\, {\rm d} \lambda(f) \,\, ,
\end{equation}
where $\lambda$ denotes the Haar measure on $\mathsf{T}$. Then, there exists a sequence $\{C_k\}$ of positive numbers, which depends only on the data $\big(m,s, D, \{\Gamma_k\},\mathrm{inj}(M/\mathsf{T},\check{g}^{(\infty)}),\mathsf{G}\big)$ and not on $\{\epsilon^{(n)}\}$, such that
\begin{equation} \label{eq:estaveraging}
\big\|g^{(n)} -g^{(n)}_{\mathsf{T}}\big\|_{\mathcal{C}^k} \leq C_k\, \epsilon^{(n)} \quad \text{ for every $k, n \in \mathbb{N}$} \,\, .
\end{equation}
\end{theorem}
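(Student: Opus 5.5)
We propose to prove Theorem \ref{thm:equivGH2} by adapting the averaging argument of \cite[Proposition 4.9]{ChFuGro92}, with Theorem \ref{thm:equivGH1} playing the role of the fibration. The key point is that the fibers of $\pi: M \to M/\mathsf{T}$ are the right $\mathsf{T}$-orbits, which are flat by Proposition \ref{prop:flatorb} and have $g^{(n)}$-diameter at most $2\epsilon^{(n)}$, because $\pi$ is an $\epsilon^{(n)}$-Gromov--Hausdorff approximation collapsing each orbit to a point. For $f \in \mathsf{T}$ we will write $g^{(n)} - f^*g^{(n)}$ as an integral along a path $f_s$ from the identity to $f$, namely $\int_0^1 \mathcal{L}_{X} g^{(n)}\,{\rm d}s$ pulled back by $f_s$, where $X$ is the fundamental vector field of a generator in $\mathrm{Lie}(\mathsf{T})$. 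Averaging over $\lambda$ then gives the bound
$$
\big\|g^{(n)} - g^{(n)}_{\mathsf{T}}\big\|_{\mathcal{C}^k} \leq \sup_{X} \big\|\mathcal{L}_X g^{(n)}\big\|_{\mathcal{C}^k} = 2\sup_X \big\|\mathrm{Sym}\, D^{g^{(n)}}X\big\|_{\mathcal{C}^k},
$$
where the supremum runs over fundamental fields $X$ whose flows reach every point of $\mathsf{T}$ within unit time. We will use the fact that the right $\mathsf{T}$-action commutes with the isometric left $\mathsf{G}$-action.

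The task is therefore to bound $X$ and its covariant derivatives. We will choose a lattice basis of $\mathrm{Lie}(\mathsf{T})$ that is short and almost orthonormal, up to scale, with respect to the flat orbit metric, reducing to a Minkowski-reduced basis of the orbit lattice. Each generator then has $|X|_{g^{(n)}} \leq C\epsilon^{(n)}$ at every point: the orbits are flat tori of diameter at most $2\epsilon^{(n)}$, and by $\mathsf{G}$-homogeneity together with commutation of the two actions, $|X|$ is constant along $M$. Higher derivatives will come from identifying $X$ with a Killing-type field. Since the right $\mathsf{T}$-action is not isometric for $g^{(n)}$, $X$ is not Killing. We will instead control $D^iX$ through the $\mathcal{C}^2$-almost Riemannian submersion property and the $\{\widetilde{\Gamma}_k\}$-boundedness of $\pi$ from Theorem \ref{thm:equivGH1}: $X$ is vertical for $\pi$, and the second fundamental form and $A$-tensor of the fibration are bounded in $\mathcal{C}^k$ by constants depending on $\{\widetilde{\Gamma}_k\}$ and $\{\Gamma_k\}$. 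Combined with $|X| \leq C\epsilon^{(n)}$ and an interpolation, or equivalently a Jacobi-field ODE along the flat fibers, this should give $\|D^iX\|_{\mathcal{C}^0} \leq C_i\epsilon^{(n)}$.

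The main obstacle is the last estimate: making the higher-derivative bounds on $X$ linear in $\epsilon^{(n)}$ with constants independent of $n$. Our first attempt will follow \cite{ChFuGro92} and pass to the local universal cover. Pulling $g^{(n)}$ back by the exponential map to a ball of fixed radius in the tangent space, normal to the fiber, gives a metric with uniform $\mathcal{C}^k$ bounds from $\{\Gamma_k\}$. On this cover the $\mathsf{T}$-action lifts to a local action of a group containing a nearly translational abelian subgroup, whose generators are $\epsilon^{(n)}$-close to Killing fields of a nilpotent (here abelian) local isometry group. Standard elliptic and harmonic-coordinate estimates then bound the $\mathcal{C}^k$ norm of each generator's deviation from the identity by a constant multiple of its displacement $\epsilon^{(n)}$. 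The dependence of the constants on $\mathrm{inj}(M/\mathsf{T},\check{g}^{(\infty)})$ and on $\mathsf{G}$ enters through the size of these charts and through the constants $\widetilde{\Gamma}_k$.
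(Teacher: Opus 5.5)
\textbf{There is a genuine gap: the derivative bound $\|D^iX\|_{\mathcal{C}^0}\le C_i\epsilon^{(n)}$ is asserted but never proved, and your route cannot prove it.}

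Your reduction to short generators is sound in outline. The bound $|X|_{g^{(n)}}\le C\epsilon^{(n)}$ for a reduced lattice basis, constant on $M$ by $\mathsf{G}$-invariance, is correct; it is the infinitesimal version of the paper's bound \eqref{eq:lengthLambda} on the fundamental domain. The derivative bound, however, carries the whole content of the theorem.

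Split $DX$ with O'Neill's tensors:
\begin{itemize}
\item[$\bcdot$] Since $X$ is intrinsically parallel on the flat fibers, $D_UX=T_UX$ for $U$ vertical, which is $O(\epsilon^{(n)})$.
\item[$\bcdot$] For $Y$ horizontal, $\mathcal{H}D_YX=A_YX$, also $O(\epsilon^{(n)})$.
\item[$\bcdot$] For $Y$ basic, $\mathcal{V}D_YX=T_XY+[Y,X]$. Here $[Y,X]=-\mathcal{L}_XY$ measures how the right $\mathsf{T}$-flow moves the $g^{(n)}$-horizontal distribution, and this term is not controlled.
\end{itemize}
Nothing in Theorem \ref{thm:equivGH1} sees the last term. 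The $\mathcal{C}^2$-almost submersion property and the bounds on $\nabla^k\pi$ involve only $\mathrm{d}\pi$, so they only see horizontal projections. A Jacobi-type ODE is available only along the fibers, because the $\mathsf{T}$-flow is not isometric and does not map transversal geodesics to geodesics. Interpolation would need an a priori bound on $D^2X$ and would lose a power of $\epsilon^{(n)}$.

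In Lie-algebraic terms, let $X$ be generated by $Z\in\mathfrak{t}$. Then on $\mathfrak{m}$ one has $(\mathcal{L}_Xg^{(n)})_{e\mathsf{H}}=-\big(g^{(n)}(\mathrm{ad}(Z)\cdot,\cdot)+g^{(n)}(\cdot,\mathrm{ad}(Z)\cdot)\big)$. So what you need is that $\mathrm{ad}(Z)|_{\mathfrak{m}}$ is $g^{(n)}$-skew up to $O(\epsilon^{(n)})$. This operator is in general of order one, or larger, since $|Z|_Q$ may blow up. That is exactly the near $\mathrm{Ad}(\mathsf{T})$-invariance the theorem asserts, so the step is circular as it stands.

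Your fallback via the local universal cover of \cite{ChFuGro92} is the right circle of ideas. But its decisive claim is only asserted: that the lift of the given, non-isometric right $\mathsf{T}$-action is $\mathcal{C}^k$-close to translations at scale $\epsilon^{(n)}$, with constants independent of $n$. Relating the given $\mathsf{T}$ to the isometric nilpotent structure of the local fundamental pseudogroup is precisely the hard part.

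The paper does not prove this by hand; it imports it from \cite{NaTian18} as Theorem \ref{thm:weakcoord}. This provides an $\big(r,\{L_k\}\big)$-bounded trivializing atlas: weak coordinate patches $\xi$ in which $\mathsf{T}$ acts by translations of $\mathbb{R}^s$, and $\xi^*g$ is $\Lambda$-invariant with the uniform bounds \eqref{eq:rLbounded1}--\eqref{eq:rLbounded2}. With this input the proof is two lines:
\begin{itemize}
\item[$\bcdot$] $\epsilon$-density of the $\Lambda$-orbits forces the lift of any $f\in\mathsf{T}$ to be a translation by $|\tau|\le C\epsilon$.
\item[$\bcdot$] The Mean Value Theorem then gives $\|g-f^*g\|_{\mathcal{C}^k}\le C_k\epsilon$.
\end{itemize}
In your language, $X=v^j\partial_{y^j}$ with $|v|\le C\epsilon$, and $\|\mathcal{L}_Xg\|_{\mathcal{C}^k}\le C_k\epsilon$ follows at once.

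These charts also fix a secondary issue in your first step. Since $f_s$ is not a $g^{(n)}$-isometry, $\|f_s^*(\mathcal{L}_Xg^{(n)})\|_{\mathcal{C}^k}$ is not equal to $\|\mathcal{L}_Xg^{(n)}\|_{\mathcal{C}^k}$. Without translation charts, this would need a separate Gronwall-type argument.
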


By means of Theorem \ref{thm:equivGH2}, whenever we are in the setting of Theorem \ref{thm:equivGH1}, we can assume without loss of generality that the metrics $g^{(n)}$ are also right $\mathsf{T}$-invariant. Since this plays an important role in the proof of our main results, we outline the proof of Theorem \ref{thm:equivGH2} in Appendix \ref{sect:appendix2}. 

\medskip
\section{Geometry of collapsed compact homogeneous spaces}
\label{sect:smoothconv} \setcounter{equation} 0

In this section we use the results of Section \ref{sect:GHconv} to characterize the local $\mathcal{C}^{\infty}$-limit of sequences in $\mathscr{M}^{\mathsf{G}}\big(D,\{\Gamma_k\}\big)$. The result below, together with Theorem \ref{thm:equivGH1}, completes the proof of Theorem \ref{thm:MAIN'}.

\begin{theorem} \label{thm:equivGH3}
Let $M^m = \mathsf{G}/\mathsf{H}$ be a homogeneous space, with $\mathsf{H}$ and $\mathsf{G}$ compact, connected Lie groups. Let also $D >0$ be a constant and $\{\Gamma_k\}$ a sequence of positive numbers. Fix a sequence $\{g^{(n)}\} \subset \mathscr{M}^{\mathsf{G}}\big(D,\{\Gamma_k\}\big)$ and let the data $\big(\mathsf{T}, \check{g}^{(\infty)}\big)$ be as in the conclusion of Theorem \ref{thm:equivGH1}. Then, up to passing to a subsequence, $(M,g^{(n)})$ converges locally in the $\mathcal{C}^{\infty}$-topology to $\mathbb{R}^s \times (M/\mathsf{T}, \check{g}^{(\infty)})$ as $n \to +\infty$.
\end{theorem}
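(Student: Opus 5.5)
We first reduce, using Theorem \ref{thm:equivGH2}, to right $\mathsf{T}$-invariant metrics. Estimate \eqref{eq:estaveraging} gives $\|g^{(n)}-g^{(n)}_{\mathsf{T}}\|_{\mathcal{C}^k}\leq C_k\epsilon^{(n)}\to 0$. The averaged metrics are still $\mathsf{G}$-invariant, since the right $\mathsf{T}$-action commutes with the left $\mathsf{G}$-action. For $n$ large they satisfy uniform bounds of the same type as $g^{(n)}$, after slightly enlarging $\{\Gamma_k\}$ and renormalizing $|\sec|\leq 1$. Their exponential maps at the origin are therefore $\mathcal{C}^\infty$-close to those of $g^{(n)}$ on the ball of radius $\pi-\delta$. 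It follows that the geometric models of $g^{(n)}$ and $g^{(n)}_{\mathsf{T}}$ have the same local $\mathcal{C}^\infty$-limit, and we may assume that each $g^{(n)}$ is $\mathsf{G}\times\mathsf{T}$-invariant.

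For such metrics, $\pi:(M,g^{(n)})\to(M/\mathsf{T},\check g^{(n)})$ is a Riemannian submersion onto the quotient metric $\check g^{(n)}$, which is $\mathsf{G}$-invariant. Its fibers are flat tori by Proposition \ref{prop:flatorb}. The submersion is a principal $\mathsf{T}^s$-bundle and is described by three objects:
\begin{itemize}
\item the base metric $\check g^{(n)}$;
\item a family of flat fiber metrics, which is $\mathsf{G}$-invariant and hence constant, i.e. an inner product $\langle\cdot,\cdot\rangle^{(n)}$ on $\mathfrak{t}$;
\item a $\mathsf{G}$-invariant connection form $\theta^{(n)}$ with curvature $F^{(n)}$.
\end{itemize}
By Theorem \ref{thm:equivGH1}, $\pi$ is an $\epsilon^{(n)}$-almost Riemannian submersion onto $\check g^{(\infty)}$ and is $\{\widetilde\Gamma_k\}$-bounded, so $\check g^{(n)}\to\check g^{(\infty)}$ smoothly as $\mathsf{G}$-invariant metrics on $M/\mathsf{T}$. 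The GH-convergence to $M/\mathsf{T}$ forces the fiber diameters, i.e. $\langle\cdot,\cdot\rangle^{(n)}$, to tend to $0$. O'Neill's formulas give $|\mathrm{Rm}(g^{(n)})|\gtrsim |F^{(n)}|_{g^{(n)}}^2$, where the norm includes the fiber metric, with analogous control of all derivatives from the $\{\Gamma_k\}$ bounds. Hence the $\mathfrak{t}$-valued curvature measured in $\langle\cdot,\cdot\rangle^{(n)}$ is uniformly bounded in $\mathcal{C}^\infty$.

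We then pass to the universal cover of the fibers and build explicit models. Choose an $\langle\cdot,\cdot\rangle^{(n)}$-orthonormal basis of $\mathfrak{t}$, identifying $\mathfrak{t}\cong\mathbb{R}^s$ isometrically. Over a fixed $\check g^{(\infty)}$-normal ball $U\subset M/\mathsf{T}$ of radius $r_0<\mathrm{inj}(M/\mathsf{T},\check g^{(\infty)})$, trivialize the bundle using a local section; in these coordinates $g^{(n)}$ pulls back to $\mathbb{R}^s\times U$ as
$$\check g^{(n)}+|{\rm d}y+A^{(n)}|^2,$$
where $y\in\mathbb{R}^s$ are the orthonormal fiber coordinates and $A^{(n)}$ is the connection form written in this basis. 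The local section is chosen in radial (Coulomb) gauge, so that $|A^{(n)}|$ and all its derivatives are controlled by $F^{(n)}$. Passing to a subsequence, $A^{(n)}\to A^{(\infty)}$ smoothly by Arzel\`a--Ascoli, as in Remark \ref{rem:convf}. The limit metric $\check g^{(\infty)}+|{\rm d}y+A^{(\infty)}|^2$ on $\mathbb{R}^s\times U$ is locally homogeneous, with $\mathbb{R}^s$ acting by translations.

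The main obstacle is to show that the limit is locally a product, i.e. that $F^{(\infty)}=0$. I expect to obtain this from the collapse together with $\mathsf{G}$-invariance. The vertical vectors, which form the fiber directions of the torus, have lengths $\lambda_i^{(n)}\to 0$. The O'Neill $A$-tensor is $A^{(n)}_XY=\tfrac12 F^{(n)}(X,Y)$ viewed as a vertical vector, with $|A^{(n)}|\leq C$. In a fixed reference basis of $\mathfrak{t}$, the curvature $F^{(n)}_{\mathrm{ref}}(X,Y)$ is a $\mathsf{G}$-invariant $2$-form on the fixed base with values in the fixed lattice Lie algebra, so it is topological up to exact terms. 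Its norm in the collapsing fiber metric then tends to zero, giving $|F^{(n)}|_{g^{(n)}}\lesssim \max_i\lambda_i^{(n)}\cdot|F_{\mathrm{ref}}^{(n)}|\to 0$, provided $F^{(n)}_{\mathrm{ref}}$ is bounded. That boundedness should follow from $\mathsf{G}$-invariance: the invariant connections on the homogeneous bundle $\mathsf{G}/\mathsf{H}\to\mathsf{G}/\mathsf{H}\mathsf{T}$ form a finite-dimensional affine family determined by the isotropy data, and in our setting this family is canonical because $\mathsf{T}$ lies in the normalizer. The canonical connection is the orthogonal complement with respect to a fixed $\mathrm{Ad}(\mathsf{H}\mathsf{T})$-invariant decomposition, and its curvature is the bracket term $[\cdot,\cdot]_{\mathfrak{t}}$ on the complement, which is $n$-independent. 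The subtle point is that the genuine connection of $g^{(n)}$ may differ from the canonical one by an invariant $\mathfrak{t}$-valued $1$-form. I would handle this using the $\mathcal{C}^2$ almost-submersion bound from Theorem \ref{thm:equivGH1}.

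Once $F^{(\infty)}=0$, the limit $A^{(\infty)}$ is closed and, in the radial gauge, equal to $0$ on $U$. The limit is therefore $\mathbb{R}^s\times(U,\check g^{(\infty)})$. Finally, pulling back by the exponential maps on the $\pi$-ball gives convergence of geometric models: by local homogeneity the limit model is determined by the local geometry near a point, via \cite[Theorem A]{Ped22}. This yields convergence to the geometric model of $\mathbb{R}^s\times(M/\mathsf{T},\check g^{(\infty)})$.
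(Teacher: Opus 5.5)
\paragraph{The gap.} It is the step you flag yourself as the main obstacle, and it cannot be closed. You need the invariant $\mathfrak{t}$-valued $1$-form $\phi^{(n)}$ to stay bounded. This is the form by which the connection of $g^{(n)}$ differs from the canonical one; equivalently, it measures the tilt of $\mathfrak{b}^{(n)}$ towards $\mathfrak{t}$. The $\mathcal{C}^2$ almost-submersion bound of Theorem~\ref{thm:equivGH1} cannot give this. Once the metrics are $\mathsf{T}$-invariant, $\pi$ is an honest Riemannian submersion onto $\check g^{(n)}$ with totally geodesic fibres. Any Riemannian submersion has $\nabla^2\pi(v,w)=0$ for horizontal $v,w$, so the bound is blind to $\phi^{(n)}$.

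\paragraph{A counterexample.} Take $\mathsf{G}=U(1)\times SU(2)$ and $\mathsf{H}=\{e\}$. Let $e_0$ generate $U(1)=\mathbb{R}/2\pi\mathbb{Z}$, and let $X_1,X_2,X_3$ be a basis of $\mathfrak{su}(2)$ with $[X_1,X_2]=2X_3$ cyclically; write $e^0,X^1,X^2,X^3$ for the dual basis. Consider the left-invariant metrics
\[
g_\epsilon=(\epsilon\,e^0-X^1)^2+(X^1)^2+(X^2)^2+(X^3)^2 ,\qquad \epsilon\to 0 .
\]
\begin{itemize}
\item The Lie algebra automorphism $e_0\mapsto\epsilon^{-1}e_0$ pulls $g_\epsilon$ back to $g_1$. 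Hence all $g_\epsilon$ are locally isometric, and after one fixed rescaling they lie in $\mathscr{M}^{\mathsf{G}}\big(D,\{\Gamma_k\}\big)$.
\item The horizontal lift of $X\in\mathfrak{su}(2)$ is $X+\epsilon^{-1}X^1(X)\,e_0$, and it has round length. So $\pi$ is a Riemannian submersion onto the round $(S^3,\check g)$, with totally geodesic fibres of length $2\pi\epsilon$. The data of Theorem~\ref{thm:equivGH1} are therefore $\mathsf{T}=U(1)$ and $\check g^{(\infty)}=\check g$.
\item However, $\phi^{(\epsilon)}=\epsilon^{-1}X^1$ blows up. The bundle is trivial, and it is exactly the exact part of $F_{\mathrm{ref}}$ that is unbounded.
\item With the horizontal orthonormal basis $X_1+\epsilon^{-1}e_0,\,X_2,\,X_3$ one gets $|A|^2=\tfrac14\sum|\mathcal{V}[e_\alpha,e_\beta]|^2=2$ for every $\epsilon$.
\item O'Neill's formula then gives $\mathrm{scal}(g_\epsilon)=\mathrm{scal}(\check g)-|A|^2=6-2=4$. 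But $\mathbb{R}\times(S^3,\check g)$ has scalar curvature $6$.
\item Explicitly, $X_1+2e_0$ is Killing with closed dual form $e^0$, hence parallel for $g_1$. The universal cover is $\mathbb{R}\times\big(S^3,2(X^1)^2+(X^2)^2+(X^3)^2\big)$, a product with a Berger sphere.
\end{itemize}
So $F^{(\infty)}\neq 0$ here, and the conclusion you are aiming for fails.

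\paragraph{Comparison with the paper.} The paper settles the same point through Lemma~\ref{lem:binfty} (transversality of $\mathfrak{b}^{(\infty)}$ and $\mathfrak{t}$), which feeds Corollary~\ref{cor:conv} and Lemma~\ref{lem:Akth}. The same example defeats it:
\[
\mathfrak{b}^{(\epsilon)}=\mathrm{span}(X_1+\epsilon^{-1}e_0,X_2,X_3)\to\mathrm{span}(e_0,X_2,X_3)\supset\mathfrak{t}.
\]
The faulty step is Claim~2 of that lemma. The Gram determinant is computed in a basis of $\mathfrak{b}^{(n)}$ whose image under $\mathrm{d}\pi$ itself degenerates, so its vanishing does not force $\mathrm{vol}(B,\check g^{(n)})\to 0$. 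Your own observation rules this out directly: the almost-submersion inequality gives $\check g^{(n)}\to\check g^{(\infty)}$, which is correct and shorter than the paper's route through Proposition~\ref{prop:GHbases} and Colding's theorem. It shows this volume stays bounded below. So the paper's proof has the same hole as yours.

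\paragraph{What your set-up does prove.} The local limit is $\check g^{(\infty)}+|\mathrm{d}y+A^{(\infty)}|^2$. This is a locally homogeneous metric fibred over $(M/\mathsf{T},\check g^{(\infty)})$ by flat, totally geodesic leaves, with possibly nonzero curvature $F^{(\infty)}$. Obtaining a product splitting needs an additional hypothesis, for instance a uniform bound on $\phi^{(n)}$.
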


We notice that, if ${\rm vol}(M,g^{(n)})$ is uniformly bounded away from zero, then $\mathsf{T} = \{e\}$, $s=0$ and Theorem \ref{thm:equivGH3} is a straightforward consequence of the Cheeger--Gromov Compactness Theorem. Hence, we restrict our attention to the case ${\rm vol}(M,g^{(n)}) \to 0$ as $n \to +\infty$. \smallskip

Before proceeding with the proof, we introduce some notation. Let $\mathfrak{g}$ denote the Lie algebra of $\mathsf{G}$ and $\mathfrak{h}$ the Lie algebra of $\mathsf{H}$. Fix an ${\rm Ad}(\mathsf{G})$-invariant Euclidean inner product $Q$ on $\mathfrak{g}$, and let $\mathfrak{m}$ be the $Q$-orthogonal complement of $\mathfrak{h}$ in $\mathfrak{g}$. Via the infinitesimal $\mathsf{G}$-action, we identify $\mathfrak{m} \simeq T_{e\mathsf{H}}M$, and hence we identify any $\mathsf{G}$-invariant tensor field on $M$ with the corresponding ${\rm Ad}(\mathsf{H})$-invariant tensor on $\mathfrak{m}$. Moreover, the identity component $(N_{\mathsf{G}}(\mathsf{H})/\mathsf{H})_0$ is identified with a compact subgroup of $\mathsf{G}$ whose Lie algebra is the ${\rm Ad}(\mathsf{H})$-trivial submodule
$$
\mathfrak{m}_0 \coloneqq \big\{X \in \mathfrak{m} : [\mathfrak{h},X] = \{0\}\big\}
$$
(see, e.g., \cite[Lemma 4.27]{Boe04}). In order to prove Theorem \ref{thm:equivGH3}, we will need a number of intermediate results. To avoid repetition, we collect some background notations, assumptions and facts in the following Remark.

\begin{remark} \label{rem:thm41}
Let $\{g^{(n)}\} \subset \mathscr{M}^{\mathsf{G}}\big(D,\{\Gamma_k\}\big)$ be a sequence such that ${\rm vol}(M,g^{(n)}) \to 0$ as $n \to +\infty$ and let the data $\big(\mathsf{T},\check{g}^{(\infty)},\{\epsilon^{(n)}\}\big)$ be as in Theorem \ref{thm:equivGH1}. Let us also assume that each metric $g^{(n)}$ is ${\rm Ad}(\mathsf{T})$-invariant. We observe the following facts.
\begin{itemize}
\item[$i)$] The fact that, without loss of generality, we can prove Theorem \ref{thm:equivGH3} by choosing a sequence of metrics $g^{(n)}$ that are also ${\rm Ad}(\mathsf{T})$-invariant follows by Theorem \ref{thm:equivGH2}.
\item[$ii)$] By ${\rm Ad}(\mathsf{T})$-invariance, each $g^{(n)}$ induces a $\mathsf{G}$-invariant Riemannian metric $\check{g}^{(n)}$ on $B = \mathsf{G}/\mathsf{H}\mathsf{T}$ such that the $\mathsf{T}$-principal bundle
\begin{equation} \label{eq:sub(n)}
\pi: (\mathsf{G}/\mathsf{H},g^{(n)}) \to (\mathsf{G}/\mathsf{H}\mathsf{T},\check{g}^{(n)})
\end{equation}
is a Riemannian submersion for all $n \in \mathbb{N}$.
\item[$iii)$] The Lie algebra $\mathfrak{t}$ of $\mathsf{T}$ is an abelian subalgebra of $\mathfrak{m}_0$, with $\ker(\mathrm{d}\pi|_{e\mathsf{H}}) = \mathfrak{t}$. We denote by $\hat{g}^{(n)}$ the inner product induced by $g^{(n)}$ on $\mathfrak{t}$, and by $\mathfrak{b}^{(n)}$ the $g^{(n)}$-orthogonal complement of $\mathfrak{t}$ in $\mathfrak{m}$.
\end{itemize}
\end{remark}

We begin by proving some properties of the sequence of Riemannian submersions defined in \eqref{eq:sub(n)}.

\begin{lemma} \label{lem:bn}
Consider the framework of Remark \ref{rem:thm41}. Then, the following claims hold. \begin{itemize}
\item[(i)] The subspace $\mathfrak{b}^{(n)}$ is a reductive complement for $\mathfrak{h}+\mathfrak{t}$ in $\mathfrak{g}$, i.e., $[\mathfrak{h} + \mathfrak{t}, \mathfrak{b}^{(n)}] \subset \mathfrak{b}^{(n)}$.
\item[(ii)] The fibers of $\pi$ are totally geodesic for all $n \in \mathbb{N}$.
\item[(iii)] $\lim_{n \to +\infty} |V|_{g^{(n)}} = 0$ for every $V \in \mathfrak{t}$.
\end{itemize}
\end{lemma}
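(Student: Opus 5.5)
For (i), I work at the level of Lie algebras, identifying $g^{(n)}$ with an ${\rm Ad}(\mathsf{H})$-invariant inner product on $\mathfrak{m}$ that is also ${\rm Ad}(\mathsf{T})$-invariant, by Remark \ref{rem:thm41}. Since $\mathfrak{t} \subset \mathfrak{m}_0$ commutes with $\mathfrak{h}$, the subspace $\mathfrak{t}$ is ${\rm Ad}(\mathsf{H})$-invariant. Hence its $g^{(n)}$-orthogonal complement $\mathfrak{b}^{(n)}$ in $\mathfrak{m}$ is ${\rm Ad}(\mathsf{H})$-invariant, and so $[\mathfrak{h},\mathfrak{b}^{(n)}] \subset \mathfrak{b}^{(n)}$ because $\mathsf{H}$ is connected. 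For $\mathfrak{t}$ I use the ${\rm Ad}(\mathsf{T})$-invariance of $g^{(n)}$: the group $\mathsf{T}$ normalizes $\mathsf{H}$, so ${\rm Ad}(\mathsf{T})$ preserves $\mathfrak{h}$ and, being $Q$-orthogonal, also $\mathfrak{m}$. Since $\mathsf{T}$ is abelian, ${\rm Ad}(\mathsf{T})$ acts trivially on $\mathfrak{t}$. Being $g^{(n)}$-orthogonal, it therefore preserves $\mathfrak{b}^{(n)} = \mathfrak{t}^{\perp}\cap\mathfrak{m}$. Differentiating gives $[\mathfrak{t},\mathfrak{b}^{(n)}]\subset\mathfrak{b}^{(n)}$. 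Finally, $\mathfrak{g} = \mathfrak{h}\oplus\mathfrak{t}\oplus\mathfrak{b}^{(n)}$, so $\mathfrak{b}^{(n)}$ is a reductive complement of $\mathfrak{h}+\mathfrak{t}$, which is the Lie algebra of $\mathsf{H}\mathsf{T}$.

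For (ii), the fibers of $\pi$ are the right $\mathsf{T}$-orbits. These act by $\mathsf{G}$-equivariant diffeomorphisms, and they are isometries of $g^{(n)}$ by the ${\rm Ad}(\mathsf{T})$-invariance. Since $\mathsf{G}$ acts transitively by isometries and permutes the fibers, it suffices to check that the fiber through $e\mathsf{H}$ is totally geodesic there. I will use the standard O'Neill/Koszul formula for a homogeneous Riemannian submersion $\mathsf{G}/\mathsf{H}\to\mathsf{G}/\mathsf{H}\mathsf{T}$ with reductive decomposition $\mathfrak{m} = \mathfrak{t}\oplus\mathfrak{b}^{(n)}$. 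The second fundamental form of the fiber, paired with $X\in\mathfrak{b}^{(n)}$ and $U,V\in\mathfrak{t}$, equals up to sign
\[
g^{(n)}([X,U]_{\mathfrak{m}},V)+g^{(n)}([X,V]_{\mathfrak{m}},U).
\]
By (i), $[X,U]\in\mathfrak{b}^{(n)}$, which is orthogonal to $\mathfrak{t}$, so both terms vanish. This uses the reductive complement $\mathfrak{b}^{(n)}$, which is not the $Q$-orthogonal one, so I will check that the Koszul formula for ${\rm Ad}(\mathsf{H}\mathsf{T})$-invariant splittings applies. Alternatively, I can invoke the fact that a principal bundle whose metric is invariant under the structure group and whose fibers are orbits of Killing fields with constant mutual inner products has totally geodesic fibers. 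The right-action Killing fields have constant inner products by Proposition \ref{prop:flatorb}, and the fibers are flat, so $\nabla_UV$ tangent to the fiber follows from Koszul's formula with constant $g(U,V)$ and commuting fields.

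For (iii), I argue by contradiction. Suppose some $V\in\mathfrak{t}$ has $|V|_{g^{(n)}}\geq c>0$ along a subsequence. By Theorem \ref{thm:equivGH1}, $\pi$ is an $\epsilon^{(n)}$-Gromov--Hausdorff approximation, so every $\mathsf{T}$-fiber has $g^{(n)}$-diameter at most $\epsilon^{(n)}\to0$. On the other hand, the fiber through $e\mathsf{H}$ is a flat torus $\mathfrak{t}/\Lambda$ with inner product $\hat g^{(n)}$, where $\Lambda$ is the fixed lattice $\ker\exp|_{\mathfrak{t}}$, and it is totally geodesic by (ii). Its intrinsic distances therefore agree with the ambient ones at small scales. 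In a flat torus of diameter $\delta$, every closed geodesic direction has systole at most $2\delta$, hence every lattice vector in $\Lambda$ has $\hat g^{(n)}$-length $O(\epsilon^{(n)})$ once geodesics realize distances; more directly, the covering radius tends to zero. Since $\Lambda$ spans $\mathfrak{t}$ and a fixed basis of $\Lambda$ has lengths tending to zero, every fixed $V$, written as a fixed combination of that basis, has $|V|_{g^{(n)}}\to0$. The main obstacle is this last comparison between extrinsic and intrinsic fiber diameter. I would handle it with the injectivity radius bound $\mathrm{inj}\geq\pi$ of the geometric model at scale $|{\rm sec}|\le1$: a short ambient geodesic between two fiber points that is shorter than the intrinsic distance contradicts total geodesy plus local uniqueness of geodesics. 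If needed, I will simply bound the intrinsic diameter of the fiber by $C\epsilon^{(n)}$ using the $\mathcal{C}^2$-almost submersion estimate.
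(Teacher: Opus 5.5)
Your arguments for (i) and (ii) are correct and essentially the paper's. The paper derives $[\mathfrak{t},\mathfrak{b}^{(n)}]\subset\mathfrak{b}^{(n)}$ from $g^{(n)}([V_1,X],V_2)=-g^{(n)}([V_1,V_2],X)=0$, which is the infinitesimal form of your ${\rm Ad}(\mathsf{T})$-argument. It proves (ii) with the same Koszul-type formula for homogeneous metrics (Besse, Lemma 7.27). Your worry about the splitting is unnecessary: the formula only needs the ${\rm Ad}(\mathsf{H})$-invariant complement $\mathfrak{m}$, and $\mathfrak{b}^{(n)}$ enters only as the normal space.

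\textbf{The gap is the last step of (iii).} Knowing that the flat torus $(\mathfrak{t}/\Lambda,\hat g^{(n)})$ has small diameter (covering radius) only gives, for each $n$, \emph{some} basis of $\Lambda$ of $\hat g^{(n)}$-short vectors. That basis depends on $n$ and differs from a fixed one by an element of $\mathrm{GL}(s,\mathbb{Z})$ whose entries may be unbounded. So ``a fixed basis of $\Lambda$ has lengths tending to zero'' does not follow, and ``every lattice vector has length $O(\epsilon^{(n)})$'' is simply false.

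In fact (iii) itself fails in the framework of Remark \ref{rem:thm41}. Take $\mathsf{G}=\mathrm{SU}(2)\times\mathsf{T}^2$, $\mathsf{H}=\{e\}$, $\mathsf{T}=\{e\}\times\mathsf{T}^2=\mathbb{R}^2/\mathbb{Z}^2$, and $g^{(n)}=g_{S^3}+\hat g^{(n)}$, where $\hat g^{(n)}$ makes the lattice basis $e_1$, $e_2-N_ne_1$ orthogonal of length $\delta_n$. These metrics satisfy every hypothesis:
\begin{itemize}
\item[$\bcdot$] they are bi-invariant, hence ${\rm Ad}(\mathsf{T})$-invariant;
\item[$\bcdot$] they have $0\le{\rm sec}\le1$, parallel curvature, bounded diameter, and volume $2\pi^2\delta_n^2\to0$;
\item[$\bcdot$] $\pi$ is the product projection onto the round $S^3$, an exact Riemannian submersion with $\nabla^2\pi=0$ and a $\delta_n$-Gromov--Hausdorff approximation.
\end{itemize}
Yet $|e_2|_{g^{(n)}}=\delta_n\sqrt{1+N_n^2}\to+\infty$ for $\delta_n=1/n$, $N_n=n^2$, along every subsequence.

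So no argument through fiber diameters can prove (iii) for fixed $V\in\mathfrak{t}$. What such an argument yields is only that $\hat g^{(n)}\to0$ after composing with suitable $n$-dependent elements of $\mathrm{GL}(\Lambda)$. The paper's own proof of (iii) makes exactly the same jump (``$\mathrm{diam}(\pi^{-1}(b),g^{(n)})\to0$ \dots\ concludes the proof''), so it does not supply the missing step. The statement, and its later use in Corollary \ref{cor:conv}, would have to be weakened or reorganized accordingly.

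Separately, you are right that total geodesy alone does not make intrinsic and ambient fiber distances agree; the paper asserts this without justification. Your proposed fix does not work, though. Under collapse, $\mathrm{Exp}_p$ is only a local diffeomorphism on the ball of radius $\pi$. So uniqueness of geodesics in the geometric model does not prevent a short ambient geodesic that leaves the fiber from ending at a different preimage of the fiber point.
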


\begin{proof}
Since $\mathsf{T}$ is abelian and $g^{(n)}$ is $\mathrm{Ad}(\mathsf{T})$-invariant, it follows that for all $V_1, V_2 \in \mathfrak{t}$ and $X^{(n)} \in \mathfrak{b}^{(n)}$
$$
g^{(n)}([V_1,X^{(n)}],V_2) = -g^{(n)}([V_1,V_2],X^{(n)}) = 0 \,\, ,
$$
and so $[\mathfrak{t},\mathfrak{b}^{(n)}] \subset \mathfrak{b}^{(n)}$. This proves claim $(i)$. Moreover, by \cite[Lemma 7.27]{Bes08}, we have
$$
2g^{(n)}\big(D^{g^{(n)}}_{V_1}V_2,X^{(n)}\big) = g^{(n)}([V_1,V_2],X^{(n)}) +g^{(n)}([V_1,X^{(n)}],V_2) +g^{(n)}([V_2,X^{(n)}],V_1) = 0 \,\, ,
$$
which proves claim $(ii)$. In particular, the intrinsic distance of each fiber coincides with the distance in the ambient space $(M,g^{(n)})$ for all $n \in \mathbb{N}$. Moreover, since $\pi: (M,g^{(n)}) \to (B,\check{g}^{(\infty)})$ is $\epsilon^{(n)}$-Gromov--Hausdorff approximation, it follows that $\mathrm{diam}(\pi^{-1}(b),g^{(n)}) \to 0$ as $n \to +\infty$ for all $b \in B$, which concludes the proof of claim $(iii)$.
\end{proof}

\begin{remark} \label{rem:Tinvmetrics}
The space of left $\mathsf{G}$-invariant and right $\mathsf{T}$-invariant metrics on $M=\mathsf{G}/\mathsf{H}$ is identified with the space $S^2_+(\mathfrak{m})^{\mathrm{Ad}(\mathsf{H}\mathsf{T})}$ of positive-definite, $\mathrm{Ad}(\mathsf{H}\mathsf{T})$-invariant symmetric bilinear forms on $\mathfrak{m}$. Each element $g \in S^2_+(\mathfrak{m})^{\mathrm{Ad}(\mathsf{H}\mathsf{T})}$ induces a metric $\check{g}$ on $B =\mathsf{G}/\mathsf{H}\mathsf{T}$ so that the homogeneous principal torus bundle $\pi: (M,g) \to (B, \check{g})$ is a Riemannian submersion. Consequently, there is a one-to-one correspondence between elements $g \in S^2_+(\mathfrak{m})^{\mathrm{Ad}(\mathsf{H}\mathsf{T})}$ and triples $(\hat{g},\mathfrak{b},\check{g})$ given by a flat metric $\hat{g} \in S^2_+(\mathfrak{t})$, an ${\rm Ad}(\mathsf{T})$-invariant subspace $\mathfrak{b} \subset \mathfrak{m}$ and a horizontal metric $\check{g} \in S^2_+(\mathfrak{b})^{\mathrm{Ad}(\mathsf{H}\mathsf{T})}$. By the proof of Lemma \ref{lem:bn}, the toral fibers of $\pi$ are totally geodesic with respect to any metric $g \in S^2_+(\mathfrak{m})^{\mathrm{Ad}(\mathsf{H}\mathsf{T})}$.
\end{remark}

We now turn to the limiting behavior of the Riemannian submersions \eqref{eq:sub(n)} and establish the following two intermediate results.

\begin{proposition} \label{prop:GHbases}
Consider the framework of Remark \ref{rem:thm41}. Then, the manifolds $(B,\check{g}^{(n)})$ converge in the Gromov--Hausdorff topology to $(B,\check{g}^{(\infty)})$ as $n \to +\infty$.
\end{proposition}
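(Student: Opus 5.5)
The plan is to compare $\check{g}^{(n)}$ directly with $\check{g}^{(\infty)}$ through the projection $\pi$. We use two facts: $\pi:(M,g^{(n)})\to(B,\check{g}^{(n)})$ is a Riemannian submersion, and $\pi:(M,g^{(n)})\to(B,\check{g}^{(\infty)})$ is an $\epsilon^{(n)}$-Gromov--Hausdorff approximation. The aim is to show that the identity map $\mathrm{id}:(B,\check{g}^{(n)})\to(B,\check{g}^{(\infty)})$ is a $\delta^{(n)}$-Gromov--Hausdorff approximation with $\delta^{(n)}\to 0$. Surjectivity is automatic, so only the distortion estimate
$$
\big|\mathtt{d}_{\check{g}^{(n)}}(b_1,b_2)-\mathtt{d}_{\check{g}^{(\infty)}}(b_1,b_2)\big|\leq \delta^{(n)} \quad \text{for all } b_1,b_2\in B
$$
needs to be proved.

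First I will record the standard fact that, for a Riemannian submersion with compact fibers,
$$
\mathtt{d}_{\check{g}^{(n)}}(\pi(p_1),\pi(p_2)) = \mathtt{d}_{g^{(n)}}\big(\pi^{-1}(\pi(p_1)),\pi^{-1}(\pi(p_2))\big).
$$
The inequality $\leq$ holds because $\pi$ is $1$-Lipschitz. The inequality $\geq$ follows by taking a minimizing geodesic in $B$ and lifting it horizontally, which gives a curve of the same length joining the two fibers. Consequently
$$
\mathtt{d}_{\check{g}^{(n)}}(\pi(p_1),\pi(p_2)) \leq \mathtt{d}_{g^{(n)}}(p_1,p_2) \leq \mathtt{d}_{\check{g}^{(n)}}(\pi(p_1),\pi(p_2)) + 2\,\eta^{(n)},
$$
where $\eta^{(n)}$ denotes the maximal $g^{(n)}$-diameter of the fibers $\pi^{-1}(b)$.

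Second, I will show $\eta^{(n)}\to0$. By Lemma \ref{lem:bn}(ii) the fibers are totally geodesic, so their intrinsic and extrinsic diameters agree. By $\mathsf{G}$-equivariance all fibers are isometric, so it suffices to treat the fiber over $e\mathsf{H}\mathsf{T}$. Any two points $p_1,p_2$ of this fiber satisfy $\pi(p_1)=\pi(p_2)$, so the Gromov--Hausdorff approximation property gives $\mathtt{d}_{g^{(n)}}(p_1,p_2)<\epsilon^{(n)}$. Hence $\eta^{(n)}\le\epsilon^{(n)}$; this is essentially the argument behind Lemma \ref{lem:bn}(iii). Combining this with the GH-approximation property of $\pi$ for $\check{g}^{(\infty)}$, for any $b_1,b_2$ we choose lifts $p_1,p_2$ and obtain
$$
\big|\mathtt{d}_{\check{g}^{(n)}}(b_1,b_2)-\mathtt{d}_{\check{g}^{(\infty)}}(b_1,b_2)\big| \leq \big|\mathtt{d}_{\check{g}^{(n)}}(b_1,b_2)-\mathtt{d}_{g^{(n)}}(p_1,p_2)\big| + \epsilon^{(n)} \leq 3\epsilon^{(n)}.
$$
So the identity is a $3\epsilon^{(n)}$-Gromov--Hausdorff approximation, which proves the claim.

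The main point needing care is the framework assumption that the metrics are right $\mathsf{T}$-invariant. Theorem \ref{thm:equivGH1} was applied to the original metrics, while here we use the averaged ones from Theorem \ref{thm:equivGH2}. I would check that $\pi$ remains a $C\epsilon^{(n)}$-GH approximation for the averaged metrics. This follows because the estimate \eqref{eq:estaveraging} with $k=0$ gives a bilipschitz comparison $(1-C_0\epsilon^{(n)})g^{(n)}\le g^{(n)}_{\mathsf{T}}\le(1+C_0\epsilon^{(n)})g^{(n)}$. Since the diameters are at most $D$, distances then change by at most $C_0 D\epsilon^{(n)}$. The argument above goes through with $\epsilon^{(n)}$ replaced by a constant multiple of it.
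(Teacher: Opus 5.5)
Your argument is correct and is essentially the paper's: the paper chooses lifts $p_0 \in \pi^{-1}(b_0)$, $p_1 \in \pi^{-1}(b_1)$ with $\mathtt{d}_{g^{(n)}}(p_0,p_1) = \mathtt{d}_{\check{g}^{(n)}}(b_0,b_1)$ (exactly what your horizontal-lift step provides) and applies the $\epsilon^{(n)}$-Gromov--Hausdorff property of $\pi$ once, so your fiber-diameter detour and the resulting constant $3\epsilon^{(n)}$ are unnecessary. One aside is wrong but unused: totally geodesic fibers need not have equal intrinsic and extrinsic diameters (a long closed geodesic in a flat $2$-torus is a counterexample), but your estimate only uses the ambient diameter of the fibers, which you bound directly by $\epsilon^{(n)}$, so nothing depends on it.
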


\begin{proof}
Notice that, since \eqref{eq:sub(n)} is a Riemannian submersion, for any pair of points $b_0, b_1 \in B$, there exist $p_0 \in \pi^{-1}(b_0)$ and $p_1 \in \pi^{-1}(b_1)$ such that $\mathtt{d}_{g^{(n)}}(p_0, p_1) =\mathtt{d}_{\check{g}^{(n)}}(b_0, b_1)$. Since $\pi: (M,g^{(n)}) \to (B,\check{g}^{(\infty)})$ is an $\epsilon^{(n)}$-Gromov--Hausdorff approximation, this implies that
$$\begin{aligned}
\big|\mathtt{d}_{\check{g}^{(n)}}(b_0, b_1) -\mathtt{d}_{\check{g}^{(\infty)}}(b_0, b_1)\big| = \big|\mathtt{d}_{g^{(n)}}(p_0, p_1) -\mathtt{d}_{\check{g}^{(\infty)}}(\pi(p_0), \pi(p_1))\big|\leq \epsilon^{(n)}
\end{aligned}$$
and so this concludes the proof.
\end{proof}

\begin{lemma} \label{lem:binfty}
Consider the framework of Remark \ref{rem:thm41}. Then, the sequence $\{\mathfrak{b}^{(n)}\}$ subconverges to a limit reductive complement $\mathfrak{b}^{(\infty)}$ for $\mathfrak{h}+\mathfrak{t}$ in $\mathfrak{g}$.
\end{lemma}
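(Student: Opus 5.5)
The plan is to use compactness of a Grassmannian together with closedness of the reductive condition. Each $\mathfrak{b}^{(n)}$ is a linear subspace of $\mathfrak{m}$ of fixed dimension $m-s$, since it is the $g^{(n)}$-orthogonal complement of $\mathfrak{t}$ in $\mathfrak{m}$ and $\dim \mathfrak{t} = s$. So $\{\mathfrak{b}^{(n)}\}$ is a sequence in the Grassmannian $\mathrm{Gr}_{m-s}(\mathfrak{m})$, which is compact. Passing to a subsequence, we get $\mathfrak{b}^{(n)} \to \mathfrak{b}^{(\infty)}$ for some $(m-s)$-dimensional subspace $\mathfrak{b}^{(\infty)} \subset \mathfrak{m}$.

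Next I would check that $\mathfrak{b}^{(\infty)}$ is reductive, i.e.\ $[\mathfrak{h}+\mathfrak{t},\mathfrak{b}^{(\infty)}] \subset \mathfrak{b}^{(\infty)}$. By Lemma \ref{lem:bn}(i), each $\mathfrak{b}^{(n)}$ is invariant under $\mathrm{ad}(Z)$ for every $Z \in \mathfrak{h}+\mathfrak{t}$. For a fixed endomorphism $A$ of $\mathfrak{m}$, the set of $A$-invariant subspaces is closed in the Grassmannian. To see this concretely, take $X \in \mathfrak{b}^{(\infty)}$ and write it as $X = \lim X^{(n)}$ with $X^{(n)} \in \mathfrak{b}^{(n)}$, for instance $X^{(n)} = P^{(n)}X$, where $P^{(n)}$ is the $Q$-orthogonal projector onto $\mathfrak{b}^{(n)}$ and $P^{(n)} \to P^{(\infty)}$. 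Then $[Z,X] = \lim [Z,X^{(n)}]$ with $[Z,X^{(n)}] \in \mathfrak{b}^{(n)}$, and a limit of vectors lying in converging subspaces lies in the limit subspace, because $\mathrm{dist}_Q(v,\mathfrak{b}) = |(I-P)v|_Q$ is continuous in $(v,P)$. This gives $[Z,X] \in \mathfrak{b}^{(\infty)}$.

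The remaining point is that $\mathfrak{b}^{(\infty)}$ is a complement, $\mathfrak{m} = \mathfrak{t} \oplus \mathfrak{b}^{(\infty)}$, so that $\mathfrak{g} = \mathfrak{h}\oplus\mathfrak{t}\oplus\mathfrak{b}^{(\infty)}$. I expect this to be the main obstacle: transversality is an open condition, not a closed one, and since $|V|_{g^{(n)}} \to 0$ on $\mathfrak{t}$ by Lemma \ref{lem:bn}(iii), the $g^{(n)}$-orthogonal complements could a priori tilt towards $\mathfrak{t}$. My approach would be to write $\mathfrak{b}^{(n)}$ as the graph of a linear map $\theta^{(n)}:\mathfrak{b}_Q \to \mathfrak{t}$, where $\mathfrak{b}_Q$ is the $Q$-orthogonal complement of $\mathfrak{t}$ in $\mathfrak{m}$, and show that $\theta^{(n)}$ stays bounded. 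The key structural input is that $\mathfrak{t} \subset \mathfrak{m}_0$ is $\mathrm{Ad}(\mathsf{H})$-trivial and $\mathfrak{b}^{(n)}$ is $\mathrm{Ad}(\mathsf{H}\mathsf{T})$-invariant. So $\theta^{(n)}$ is $\mathrm{Ad}(\mathsf{H}\mathsf{T})$-equivariant, and it vanishes on every submodule of $\mathfrak{b}_Q$ on which $\mathsf{H}\mathsf{T}$ acts without trivial summands. Hence $\theta^{(n)}$ lives only on the $\mathrm{Ad}(\mathsf{H}\mathsf{T})$-trivial part of $\mathfrak{b}_Q$, which is contained in $\mathfrak{m}_0$.

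To bound $\theta^{(n)}$ on that part, I would use the convergence of the horizontal geometry. The $\mathcal{C}^2$ almost-Riemannian-submersion property and the $\{\widetilde{\Gamma}_k\}$-bound of $\pi$ from Theorem \ref{thm:equivGH1}, together with Proposition \ref{prop:GHbases}, show that $\check{g}^{(n)}$ is uniformly comparable to $\check{g}^{(\infty)}$. I would also use the bound $|\mathrm{sec}|\le 1$: through O'Neill's formula, the $A$-tensor $\tfrac12[X,Y]_{\mathfrak{t}}$ for $X,Y \in \mathfrak{b}^{(n)}$ is uniformly bounded in $g^{(n)}$-norm. Combined with $\mathrm{Ad}(\mathsf{T})$-invariance, I would try to show that a diverging tilt would force either a blow-up of $\check{g}^{(n)}$ relative to $Q$ or a violation of the curvature bound. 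If this estimate fails in general, the fallback is to accept $\mathfrak{b}^{(\infty)}$ as merely the limit $\mathrm{Ad}(\mathsf{H}\mathsf{T})$-invariant subspace and to recover transversality a posteriori from the fact that $\mathrm{d}\pi$ restricted to $\mathfrak{b}^{(n)}$ converges to an isomorphism onto $T_{\check{o}}B$, which follows from $\check{g}^{(n)}$ being uniformly bi-Lipschitz to $\check{g}^{(\infty)}$.
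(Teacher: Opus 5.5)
Your first two steps are correct and agree with the paper: compactness of the Grassmannian gives $\mathfrak{b}^{(n)}\to\mathfrak{b}^{(\infty)}$ along a subsequence, and $\mathrm{ad}(\mathfrak{h}+\mathfrak{t})$-invariance is a closed condition. The gap is transversality, $\mathfrak{t}\cap\mathfrak{b}^{(\infty)}=\{0\}$. You never establish it, and it cannot be established, because the tilt you anticipate actually occurs within the framework of Remark \ref{rem:thm41}.

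Here is a counterexample. Take $\mathsf{G}=\mathsf{SU}(2)\times\mathsf{U}(1)^2$ and $\mathsf{H}=\{e\}$, so that $\mathfrak{m}=\mathfrak{m}_0=\mathfrak{g}$. Let $e_1,e_2$ be $Q$-orthonormal generators of period one of the two circle factors. Let $g^{(n)}$ be the left-invariant metric which
\begin{itemize}
\item[$\bcdot$] is a fixed bi-invariant metric of curvature $1$ on $\mathfrak{su}(2)$,
\item[$\bcdot$] is orthogonal to $\mathrm{span}(e_1,e_2)$,
\item[$\bcdot$] satisfies $|e_1|^2_{g^{(n)}}=\epsilon_n^2$, $g^{(n)}(e_1,e_2)=c\,\epsilon_n$ and $|e_2|^2_{g^{(n)}}=1$, with $0<c<1$ fixed and $\epsilon_n\to0$.
\end{itemize}
Then $(M,g^{(n)})$ is $S^3$ times a flat torus, with bounded diameter, bounded curvature derivatives and volume tending to $0$. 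The torus $\mathsf{T}$ is forced to be the central $e_1$-circle, so $\mathrm{Ad}(\mathsf{T})$-invariance is automatic. Moreover, $\check g^{(n)}$ is independent of $n$: it is the round $S^3$ times a circle of length $\sqrt{1-c^2}$. Hence $\pi$ is an exact Riemannian submersion onto $(B,\check g^{(\infty)})$ with fibers of length $\epsilon_n$, and all conclusions of Theorem \ref{thm:equivGH1} hold. Nevertheless
$$
\mathfrak{b}^{(n)}=\mathfrak{su}(2)\oplus\mathbb{R}\bigl(e_2-\tfrac{c}{\epsilon_n}e_1\bigr)\;\longrightarrow\;\mathfrak{su}(2)\oplus\mathfrak{t} \,\, .
$$
In your notation, $\theta^{(n)}(e_2)=-\tfrac{c}{\epsilon_n}e_1$ diverges. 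At the same time $\check g^{(n)}$ is constant and the O'Neill tensor vanishes identically. So neither a blow-up of $\check g^{(n)}$ nor the curvature bound can rule out the tilt.

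Your fallback is circular. The bi-Lipschitz comparison of $\check g^{(n)}$ with $\check g^{(\infty)}$ only says that $\mathrm{d}\pi:(\mathfrak{b}^{(n)},g^{(n)})\to(T_{\check o}B,\check g^{(\infty)})$ is almost isometric. Grassmannian convergence is measured with $Q$, however. The $Q$-limit of $\mathrm{d}\pi|_{\mathfrak{b}^{(n)}}$ is $\mathrm{d}\pi|_{\mathfrak{b}^{(\infty)}}$, which is an isomorphism exactly when transversality already holds.

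The paper's own proof fails at the same point. It argues by contradiction through Claims 1--2 and Colding's volume continuity, but Claim 2 is false. The Gram determinant of $\check g^{(n)}$ in a $Q$-orthonormal basis of $\mathfrak{b}^{(n)}$ is not comparable to $\mathrm{vol}(B,\check g^{(n)})$, because the images $\mathrm{d}\pi(v_i^{(n)})$ degenerate together with the tilt. In the example above, the hypothesis of Claim 2 holds while $\mathrm{vol}(B,\check g^{(n)})$ stays constant. In fact, since $\mathrm{d}\pi|_{\mathfrak{b}^{(n)}}$ is a bijection onto $T_{\check o}B$, \eqref{eq:almsub} alone already gives $\check g^{(n)}\to\check g^{(\infty)}$ on $T_{\check o}B\cong\mathfrak{m}/\mathfrak{t}$. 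So no volume argument can ever detect a tilt.

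Your equivariance observation does prove a special case. Since $\theta^{(n)}$ vanishes off the $\mathrm{Ad}(\mathsf{H}\mathsf{T})$-fixed part of $\mathfrak{b}_Q$, if $\mathfrak{t}$ is its own centralizer in $\mathfrak{m}_0$ then $\mathfrak{b}^{(n)}=\mathfrak{b}_Q$ for all $n$, and the lemma is trivial. In general, the correct substitute is to work on $\mathfrak{m}/\mathfrak{t}$, or with the fixed complement $\mathfrak{b}_Q$, where $\check g^{(n)}\to\check g^{(\infty)}$ holds unconditionally. Later statements such as Corollary \ref{cor:conv} would need to be rephrased accordingly.
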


\begin{proof}
Since the Grassmannian of $(m{-}s)$-dimensional subspaces in $\mathfrak{m}$ is compact, we can pass to a subsequence such that $\mathfrak{b}^{(n)}$ converges to a limit $(m{-}s)$-dimensional subspace $\mathfrak{b}^{(\infty)}$ as $n \to +\infty$. Moreover, since $\mathfrak{b}^{(n)}$ is $\mathrm{Ad}(\mathsf{H}\mathsf{T})$-invariant for all $n \in \mathbb{N}$, it follows that $\mathfrak{b}^{(\infty)}$ is $\mathrm{Ad}(\mathsf{H}\mathsf{T})$-invariant as well. It remains to show that $\mathfrak{t} \cap \mathfrak{b}^{(\infty)} = \{0\}$. Recall that, by Theorem \ref{thm:equivGH1},
\begin{equation} \label{eq:almsub}
(1- \epsilon^{(n)}) \big|v^{(n)}\big|_{g^{(n)}} \leq \big|\mathrm{d}\pi|_{e\mathsf{H}}(v^{(n)})\big|_{\check{g}^{(\infty)}} \leq (1+ \epsilon^{(n)}) \big|v^{(n)}\big|_{g^{(n)}} \quad \text{ for every $v^{(n)} \in \mathfrak{b}^{(n)}$ .}
\end{equation}
We now prove the following two claims. \smallskip

\noindent{\it Claim 1: For any $v^{(\infty)} \in \mathfrak{b}^{(\infty)}$, we have $v^{(\infty)} \in \mathfrak{t} \cap \mathfrak{b}^{(\infty)}$ if and only if, for every sequence $v^{(n)} \in \mathfrak{b}^{(n)}$ with $v^{(n)} \to v^{(\infty)}$, one has $|v^{(n)}|_{g^{(n)}} \to 0$ as $n \to +\infty$.}

By construction, for any $v^{(\infty)} \in \mathfrak{b}^{(\infty)}$ there exists a sequence $v^{(n)} \in \mathfrak{b}^{(n)}$ such that $v^{(n)} \to v^{(\infty)}$. If $|v^{(n)}|_{g^{(n)}} \to 0$ for every such sequence, then, by \eqref{eq:almsub}, we have
$$
\big|\mathrm{d}\pi|_{e\mathsf{H}}(v^{(\infty)})\big|_{\check{g}^{(\infty)}} = 0
$$
and so $v^{(\infty)} \in \mathfrak{t}$. Conversely, if there exists a sequence $v^{(n)} \in \mathfrak{b}^{(n)}$ with $v^{(n)} \to v^{(\infty)}$ such that for each $n$, we have $|v^{(n)}|_{g^{(n)}} \geq \delta > 0$, then, by \eqref{eq:almsub}, we obtain
$$
\big|\mathrm{d}\pi|_{e\mathsf{H}}(v^{(\infty)})\big|_{\check{g}^{(\infty)}} \geq \delta
$$
and so $v^{(\infty)} \not\in \mathfrak{t}$. This concludes the proof of Claim 1. \smallskip

\noindent{\it Claim 2: If there exists a sequence $v^{(n)} \in \mathfrak{b}^{(n)}$ with $|v^{(n)}|_{Q}=1$ such that $|v^{(n)}|_{g^{(n)}}\to 0$ as $n \to +\infty$, then ${\rm vol}(B,\check{g}^{(n)}) \to 0$ as $n \to +\infty$.}

Let $\{v_i^{(n)}\}$ be a $Q$-orthonormal basis for $\mathfrak{b}^{(n)}$ and assume that $|v_1^{(n)}|_{g^{(n)}} \to 0$ as $n \to +\infty$. By compactness, we can assume that $\{v_i^{(n)}\}$ converges to a $Q$-orthonormal basis $\{v_i^{(\infty)}\}$ for $\mathfrak{b}^{(\infty)}$ as $n \to +\infty$. Moreover, by \eqref{eq:almsub}, we have
$$
|v_i^{(n)}|_{g^{(n)}} \leq \frac1{1-\epsilon^{(n)}} \big|{\rm d}\pi|_{e\mathsf{H}}(v^{(n)}_i)\big|_{\check{g}^{(\infty)}} \to \big|{\rm d}\pi|_{e\mathsf{H}}(v^{(\infty)}_i)\big|_{\check{g}^{(\infty)}}
$$
and so there exists $C >0$ such that $|v_i^{(n)}|_{g^{(n)}} \leq C$ for all $1 \leq i \leq m-s$ and $n \in \mathbb{N}$. In particular, since $|\cdot |_{g^{(n)}} = |\cdot |_{\check{g}^{(n)}}$ on $\mathfrak{b}^{(n)}$, we obtain
$$
\big|\check{g}^{(n)}(v_i^{(n)},v_j^{(n)})\big| \leq |v_i^{(n)}|_{g^{(n)}} |v_j^{(n)}|_{g^{(n)}} \leq C^2
$$
and, for $i = 1$,
$$
\big|\check{g}^{(n)}(v_1^{(n)},v_j^{(n)})\big| \leq C\, |v_1^{(n)}|_{g^{(n)}} \to 0 \,\, .
$$
Therefore $\det\big(\check{g}^{(n)}(v_i^{(n)},v_j^{(n)})\big) \to 0$ as $n \to +\infty$ and this concludes the proof of Claim 2. \smallskip

Finally, assume by contradiction that there exists $v^{(\infty)} \in \mathfrak{t} \cap \mathfrak{b}^{(\infty)}$ with $|v^{(\infty)}|_Q=1$. By Claim 1, there exists a sequence $v^{(n)} \in \mathfrak{b}^{(n)}$ such that $|v^{(n)}|_Q=1$, $v^{(n)} \to v^{(\infty)}$ and $|v^{(n)}|_{g^{(n)}} \to 0$ as $n \to +\infty$. By Claim 2, this implies $\mathrm{vol}(B,\check{g}^{(n)}) \to 0$. Recall that, by \cite[Theorem 0.1]{Col97}, the volume function is continuous under the Gromov--Hausdorff convergence in the presence of a uniform Ricci curvature lower bound. Moreover, by claim (ii) in Lemma \ref{lem:bn} and \cite[(9.36c)]{Bes08}, the manifolds $(B,\check{g}^{(n)})$ have Ricci curvature uniformly bounded from below. Therefore, by Proposition \ref{prop:GHbases}, we deduce that $\mathrm{vol}(B,\check{g}^{(\infty)}) = 0$, yielding a contradiction.
\end{proof}

As a consequence of Lemma \ref{lem:binfty}, we can identify $T_{e\mathsf{H}\mathsf{T}}B$ with $\mathfrak{b}^{(\infty)}$. Hence the limit metric $\check{g}^{(\infty)}$ on $B$ induces an $\mathrm{Ad}(\mathsf{H}\mathsf{T})$-invariant inner product on $\mathfrak{b}^{(\infty)}$, which we denote again by the same symbol. Under the identification in Remark \ref{rem:Tinvmetrics}, the following corollary holds.

\begin{corollary} \label{cor:conv}
Consider the framework of Remark \ref{rem:thm41}, Remark \ref{rem:Tinvmetrics} and Lemma \ref{lem:binfty}. Then, the sequence $\{g^{(n)}\}$ of metrics $g^{(n)} \simeq (\hat{g}^{(n)},\mathfrak{b}^{(n)},\check{g}^{(n)})$ converges in the standard Euclidean topology of $S^2(\mathfrak{m})^{\mathrm{Ad}(\mathsf{H}\mathsf{T})}$ to the degenerate metric $(0,\mathfrak{b}^{(\infty)},\check{g}^{(\infty)})$ as $n \to +\infty$.
\end{corollary}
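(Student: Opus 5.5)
Under the identification of Remark \ref{rem:Tinvmetrics}, $g^{(n)}$ is the bilinear form on $\mathfrak{m} = \mathfrak{t}\oplus\mathfrak{b}^{(n)}$ (a $g^{(n)}$-orthogonal splitting) given by $\hat g^{(n)}$ on $\mathfrak{t}$ and by $\check g^{(n)}$ on $\mathfrak{b}^{(n)}$. The degenerate limit $(0,\mathfrak{b}^{(\infty)},\check g^{(\infty)})$ is the form on $\mathfrak{m}=\mathfrak{t}\oplus\mathfrak{b}^{(\infty)}$ (a direct sum by Lemma \ref{lem:binfty}) that vanishes on $\mathfrak{t}$ and on mixed terms and equals $\check g^{(\infty)}$ on $\mathfrak{b}^{(\infty)}$. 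Since $S^2(\mathfrak{m})$ is finite-dimensional, it suffices to prove that $g^{(n)}(u,w)$ converges to this limit form for all $u,w\in\mathfrak{m}$. I would first show that the entries of $g^{(n)}$ are uniformly bounded in a fixed basis, and then identify the limit of every subsequence.

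\textbf{Step 1: the $\mathfrak{t}$-block.} By Lemma \ref{lem:bn}(iii), $|V|_{g^{(n)}}\to 0$ for every $V\in\mathfrak{t}$, so $\hat g^{(n)}\to 0$. By Cauchy--Schwarz, $|g^{(n)}(V,u)|\le |V|_{g^{(n)}}|u|_{g^{(n)}}$, so all mixed terms with $\mathfrak{t}$ vanish in the limit once $|u|_{g^{(n)}}$ is bounded for $u$ in a bounded set.

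\textbf{Step 2: the horizontal block.} Given $u\in\mathfrak{m}$, I decompose it as $u=V^{(n)}+X^{(n)}$ with $V^{(n)}\in\mathfrak{t}$ and $X^{(n)}\in\mathfrak{b}^{(n)}$. Since $\mathfrak{b}^{(n)}\to\mathfrak{b}^{(\infty)}$ and $\mathfrak{t}\cap\mathfrak{b}^{(\infty)}=\{0\}$, the projections onto $\mathfrak{t}$ along $\mathfrak{b}^{(n)}$ converge to the projection along $\mathfrak{b}^{(\infty)}$. Hence $V^{(n)}\to V^{(\infty)}$ and $X^{(n)}\to X^{(\infty)}$, where $u=V^{(\infty)}+X^{(\infty)}$ is the splitting along $\mathfrak{t}\oplus\mathfrak{b}^{(\infty)}$. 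By \eqref{eq:almsub},
$$
\big||X^{(n)}|_{g^{(n)}}-|\mathrm{d}\pi|_{e\mathsf{H}}(X^{(n)})|_{\check g^{(\infty)}}\big|\le \epsilon^{(n)}|X^{(n)}|_{g^{(n)}} .
$$
Moreover $\mathrm{d}\pi|_{e\mathsf{H}}$ is a fixed linear map, so $|\mathrm{d}\pi(X^{(n)})|_{\check g^{(\infty)}}\to|\mathrm{d}\pi(X^{(\infty)})|_{\check g^{(\infty)}}=|X^{(\infty)}|_{\check g^{(\infty)}}$, using the identification $T_{e\mathsf{H}\mathsf{T}}B\simeq\mathfrak{b}^{(\infty)}$. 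Therefore $|X^{(n)}|_{g^{(n)}}\to|X^{(\infty)}|_{\check g^{(\infty)}}$. This gives boundedness, which in turn makes the mixed terms of Step 1 tend to zero. Since $g^{(n)}(u,u)=\hat g^{(n)}(V^{(n)},V^{(n)})+\check g^{(n)}(X^{(n)},X^{(n)})$, I conclude $g^{(n)}(u,u)\to\check g^{(\infty)}(X^{(\infty)},X^{(\infty)})$. Polarization then yields convergence of $g^{(n)}(u,w)$ for all $u,w$.

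The main subtlety is Step 2, namely justifying that the almost-submersion estimate, stated on $\mathfrak{b}^{(n)}$, passes to the limit uniformly along vectors $X^{(n)}$ that move with $n$. This works precisely because $\mathrm{d}\pi|_{e\mathsf{H}}$ does not depend on $n$ and is continuous, and because the transversality $\mathfrak{t}\cap\mathfrak{b}^{(\infty)}=\{0\}$ from Lemma \ref{lem:binfty} ensures the decompositions converge. Since Lemma \ref{lem:binfty} only gives convergence of $\mathfrak{b}^{(n)}$ along a subsequence, the statement is understood along that subsequence.
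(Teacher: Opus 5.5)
Your proposal is correct and follows essentially the same route as the paper. Both arguments combine three ingredients:
\begin{itemize}
\item $\hat g^{(n)}\to 0$, from Lemma \ref{lem:bn}(iii);
\item $\mathfrak{b}^{(n)}\to\mathfrak{b}^{(\infty)}$ with $\mathfrak{t}\cap\mathfrak{b}^{(\infty)}=\{0\}$, from Lemma \ref{lem:binfty};
\item the almost-submersion estimate \eqref{eq:almsub}, with $\mathrm{d}\pi|_{e\mathsf{H}}$ read as the projection onto $\mathfrak{b}^{(\infty)}$ along $\mathfrak{t}$.
\end{itemize}
Beyond that, you spell out the details the paper compresses into ``from which the claim follows'': convergence of the splittings, boundedness of $|X^{(n)}|_{g^{(n)}}$, polarization, and the restriction to the subsequence.
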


\begin{proof}
We first notice that the differential $\mathrm{d}\pi|_{e\mathsf{H}}$ of the natural projection $\pi: M \to B$ corresponds to the canonical projection $\mathrm{pr}_{\mathfrak{b}^{(\infty)}}: \mathfrak{m} \to \mathfrak{b}^{(\infty)}$ induced by the direct sum decomposition $\mathfrak{m} = \mathfrak{t} \oplus \mathfrak{b}^{(\infty)}$. Moreover, $\hat{g}^{(n)} \to 0$ in $S^2(\mathfrak{t})$ by claim (iii) in Lemma \ref{lem:bn} and $\mathfrak{b}^{(n)} \to \mathfrak{b}^{(\infty)}$ by Lemma \ref{lem:binfty}. Finally, paraphrasing \eqref{eq:almsub}, we obtain
$$
(1-\epsilon^{(n)}) |v^{(n)}|_{\check{g}^{(n)}} \leq \big|\mathrm{pr}_{\mathfrak{b}^{(\infty)}}(v^{(n)})\big|_{\check{g}^{(\infty)}} \leq (1+\epsilon^{(n)}) |v^{(n)}|_{\check{g}^{(n)}} \quad \text{ for every $v^{(n)} \in \mathfrak{b}^{(n)}$} \,\, ,
$$
from which the claim follows.
\end{proof}

Let $A^{(n)}$ denote the O'Neill tensor \cite{ONeill66} of the Riemannian submersion \eqref{eq:sub(n)}.

\begin{lemma} \label{lem:Akth}
Consider the framework of Remark \ref{rem:thm41}. Then, the $k$-covariant derivative $(D^{g^{(n)}})^k\!A^{(n)}$ of the O'Neill tensor $A^{(n)}$ vanishes asymptotically as $n \to +\infty$ for all $k \geq 0$.
\end{lemma}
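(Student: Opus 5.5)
Since $g^{(n)}$, $A^{(n)}$ and all their covariant derivatives are $\mathsf{G}$-invariant, the pointwise norm $|(D^{g^{(n)}})^kA^{(n)}|_{g^{(n)}}$ is constant on $M$. It therefore suffices to compute everything at the origin $e\mathsf{H}$ in terms of the Lie-algebraic data $(\hat{g}^{(n)},\mathfrak{b}^{(n)},\check{g}^{(n)})$ of Remark \ref{rem:Tinvmetrics}. By Lemma \ref{lem:bn}(ii) the fibers are totally geodesic, so the O'Neill $T$-tensor vanishes and only $A^{(n)}$ survives.

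\textbf{Step 1: the case $k=0$.} For $X,Y\in\mathfrak{b}^{(n)}$ we have $A^{(n)}_XY=\tfrac12\mathcal{V}[X,Y]$, and up to sign this is the $\mathfrak{t}$-component of $[X,Y]_{\mathfrak{m}}$ with respect to $\mathfrak{m}=\mathfrak{t}\oplus\mathfrak{b}^{(n)}$. Its norm is
$$
|A^{(n)}_XY|_{g^{(n)}}=\tfrac12\big|\mathrm{pr}_{\mathfrak{t}}^{(n)}[X,Y]\big|_{\hat{g}^{(n)}} .
$$
By Corollary \ref{cor:conv}, $\mathfrak{b}^{(n)}\to\mathfrak{b}^{(\infty)}$ and the projections $\mathrm{pr}^{(n)}_{\mathfrak{t}}$ converge. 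Hence, for $X,Y$ ranging over a $\check{g}^{(n)}$-orthonormal basis, which stays bounded in $Q$ because $\check{g}^{(n)}\to\check{g}^{(\infty)}$ is nondegenerate on $\mathfrak{b}^{(\infty)}$, the vector $\mathrm{pr}^{(n)}_{\mathfrak{t}}[X,Y]$ stays $Q$-bounded. Lemma \ref{lem:bn}(iii) then gives $\hat{g}^{(n)}\to0$, so $|A^{(n)}|_{g^{(n)}}\to0$. For the other slots, $A^{(n)}_XV$ with $V$ vertical, we use $g(A_XV,Y)=-g(V,A_XY)$, which gives the same bound.

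\textbf{Step 2: higher derivatives.} The direct Lie-algebraic route is problematic here. Covariant derivatives involve the Levi-Civita connection of $g^{(n)}$, whose Christoffel-type terms $U^{(n)}$ in the Nomizu formula contain $\hat{g}^{(n)}$-inverse factors on $\mathfrak{t}$, so naive estimates blow up. I would instead combine Step 1 with the uniform bounded geometry. Write $\mathrm{Rm}(g^{(n)})$ through O'Neill's formulas. Since $T=0$, the mixed curvature components are
$$
g(R(X,V)Y,W)=g((D_VA)_XY,W)+g(A_XV,A_YW),
$$
and the horizontal-horizontal-horizontal-vertical component of $\mathrm{Rm}$ is $g((D_ZA)_XY,V)$. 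Thus the components of $DA$ appear as components of $\mathrm{Rm}$ up to quadratic terms in $A$. This does not by itself give smallness, though, only boundedness of $DA$, which comes from the $\{\Gamma_k\}$ bounds.

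So the plan is interpolation. The norms $\|(D^{g^{(n)}})^kA^{(n)}\|$ are uniformly bounded for every $k$, by induction using O'Neill's formulas expressing $(D^kA)$ in terms of $D^{k-1}\mathrm{Rm}$ and lower-order polynomial expressions in $A$, together with the $\{\Gamma_k\}$-bounded geometry. Then a Gagliardo--Nirenberg/Hamilton-type interpolation inequality $\|D^kA\|^2\le C\|A\|\,\|D^{2k}A\|$ on the compact manifold $(M,g^{(n)})$ finishes the argument. Because all norms are constant in space, this inequality can be obtained by integrating by parts over $M$ with volume cancelling: $\int|D^kA|^2=-\int\langle D^{k-1}A,\Delta D^{k-1}A\rangle$, and iterating.

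The main obstacle is the uniform bound on $D^kA$. The O'Neill relations give $DA$ only modulo components that are not directly curvature; for instance, $(D_VA)_XY$ appears only in the mixed curvature. An alternative I would try first is a purely homogeneous approach. Use the identification of Remark \ref{rem:thm41}, take the Riemannian submersion to a fixed limit base, and use the bounded map estimates of Theorem \ref{thm:equivGH1}: $\pi$ is $\{\widetilde{\Gamma}_k\}$-bounded and an $\epsilon^{(n)}$-almost submersion in $\mathcal{C}^2$. Since $A$ is the vertical part of $\nabla^2\pi$ restricted to horizontal vectors, expressed against $\check{g}^{(n)}\to\check{g}^{(\infty)}$, this gives bounds on all derivatives of $A$. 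Interpolation with Step 1 then yields decay.
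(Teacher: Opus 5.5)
Your argument is correct once the obstacle you flag is seen to be harmless. For $k\geq 1$ it takes a genuinely different route from the paper.

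\textbf{Comparison with the paper.} Your Step~1 is exactly the paper's computation \eqref{eq:An}--\eqref{eq:pr_t}. For higher derivatives the paper argues purely algebraically. It takes \eqref{eq:A'} as the complete list of components of $D^{g^{(n)}}A^{(n)}$ and concludes $|(D^{g^{(n)}})^kA^{(n)}|_{g^{(n)}}\leq C(m,k)\,|A^{(n)}|_{g^{(n)}}^{k+1}$, without using the $\{\Gamma_k\}$ bounds.

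You instead prove only $|A^{(n)}|\to 0$ Lie-theoretically. You then obtain uniform bounds on every $a_j\coloneqq|(D^{g^{(n)}})^jA^{(n)}|_{g^{(n)}}$, which is constant on $M$ by $\mathsf{G}$-invariance. Integrating by parts on the compact $M$ gives $a_j^2\leq\sqrt{m}\,a_{j-1}a_{j+1}$, and induction on $j$ yields $a_j\to0$. You do not even need the $\|D^kA\|^2\le C\|A\|\,\|D^{2k}A\|$ form.

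The paper's route would give an explicit rate if it were complete. Yours is insensitive to the fine structure of $DA$, and this matters, because \eqref{eq:A'} records only the parts of $DA$ that are algebraic in $A$:
\begin{itemize}
\item[$\bcdot$] Since the fibres are totally geodesic, O'Neill's identity gives $g((D_ZA)_XY,U)=g(R(X,Y)Z,U)$ for $X,Y,Z$ horizontal and $U$ vertical.
\item[$\bcdot$] For these homogeneous torus bundles this equals, up to sign, $\tfrac12\hat{g}\big((\check{D}_Z\Omega)(X,Y),u\big)$. Here $\Omega$ is the $\mathfrak{t}$-valued curvature form and $U$ is induced by $u\in\mathfrak{t}$.
\item[$\bcdot$] This term is of order $|A|$, not $|A|^2$, whenever $\Omega$ is not parallel on the base. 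An example is an Aloff--Wallach space $\mathrm{SU}(3)/\mathrm{U}(1)$ collapsing along a circle to the flag manifold $\mathrm{SU}(3)/\mathrm{U}(1)^2$ with its normal metric.
\end{itemize}
So the estimate $|DA|\leq C|A|^2$ fails in general, whereas your soft argument proves the lemma as stated.

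\textbf{Two comments on your Step 2.}

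First, the obstacle you flag in the O'Neill route is not real. When the fibres are totally geodesic, every component of $DA$ falls into one of two classes.
\begin{itemize}
\item[$\bcdot$] A curvature component minus an $A\ast A$ term. This covers the vertical parts of $(D_ZA)_XY$ and $(D_UA)_XY$, by exactly the two identities you wrote.
\item[$\bcdot$] Purely quadratic in $A$. This covers $(D_EA)_U=-A_{\mathcal{H}D_EU}$ and $\mathcal{H}(D_ZA)_XY=A_ZA_XY-A_XA_ZY$, while $\mathcal{H}(D_UA)_XY=0$.
\end{itemize}
Moreover $D\mathcal{V}$ is linear in $A$. Differentiating $DA=L(\mathrm{Rm})+Q(A,A)$ therefore yields uniform bounds on all $D^kA$ by induction, with Step~1 as the base case. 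The second route is then unnecessary.

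Second, in the second route, $A$ is not ``the vertical part of $\nabla^2\pi$ on horizontal vectors'': $\nabla^2\pi$ takes values in $\pi^*TB$. The correct relation is $\nabla^2\pi(X,U)=-\mathrm{d}\pi(A_XU)$, and it holds for any metric on $B$. Combined with the $\{\widetilde{\Gamma}_k\}$ bounds and the almost-submersion lower bound on $\mathrm{d}\pi$ restricted to horizontal vectors, it does control all derivatives of $A$.
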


\begin{proof}
Let $\{e^{(n)}_{\alpha}\}$ be a $g^{(n)}$-orthonormal basis for $\mathfrak{b}^{(n)}$. By Corollary \ref{cor:conv}, up to passing to a subsequence, we may assume that $\{e^{(n)}_{\alpha}\}$ converges to a $\check{g}^{(\infty)}$-orthonormal basis $\{e^{(\infty)}_{\alpha}\}$ for $\mathfrak{b}^{(\infty)}$. For the sake of notation, for every $n \in \mathbb{N} \cup \{\infty\}$, we denote by $\mathrm{pr}^{(n)}_{\mathfrak{t}} : \mathfrak{m} \to \mathfrak{t}$ the projection induced by the direct sum decomposition $\mathfrak{m} = \mathfrak{t} \oplus \mathfrak{b}^{(n)}$. By \cite[Proposition 9.24]{Bes08}, we then have

\begin{equation} \label{eq:An}
\big|A^{(n)}\big|_{g^{(n)}}^2 = \frac14 \sum_{\alpha, \beta} \big|\mathrm{pr}^{(n)}_{\mathfrak{t}}\big([e^{(n)}_{\alpha},e^{(n)}_{\beta}]\big)\big|_{\hat{g}^{(n)}}^2 \,\, .
\end{equation}

By Corollary \ref{cor:conv}, it follows that $\mathrm{pr}^{(n)}_{\mathfrak{t}} \to \mathrm{pr}^{(\infty)}_{\mathfrak{t}}$ in the space of linear endomorphisms of $\mathfrak{m}$ with the Euclidean topology. Since $[e^{(n)}_{\alpha},e^{(n)}_{\beta}] \to [e^{(\infty)}_{\alpha},e^{(\infty)}_{\beta}]$ and by Lemma \ref{lem:bn}, $\hat{g}^{(n)} \to 0$ as $n \to +\infty$, we obtain
\begin{equation} \label{eq:pr_t}
\big|\mathrm{pr}^{(n)}_{\mathfrak{t}}\big([e^{(n)}_{\alpha},e^{(n)}_{\beta}]\big)\big|_{\hat{g}^{(n)}} \to 0 \quad \text{as $n \to +\infty$}\,\, .
\end{equation}

By \cite[Section 9.32]{Bes08}, the only non-vanishing components of the first covariant derivative $D^{g^{(n)}}\!A^{(n)}$ are given by
\begin{equation} \label{eq:A'}
\big(D^{g^{(n)}}_{X^{(n)}}A^{(n)}\big)_U = -A^{(n)}_{A^{(n)}_{X^{(n)}}U} \,\, , \quad \big(D^{g^{(n)}}_{X^{(n)}}A^{(n)}\big)_{Y^{(n)}} = A^{(n)}_{X^{(n)}}A^{(n)}_{Y^{(n)}} -A^{(n)}_{Y^{(n)}}A^{(n)}_{X^{(n)}}
\end{equation}
for $X^{(n)} \in \mathfrak{b}^{(n)}$, $U \in \mathfrak{t}$. Hence, from \eqref{eq:A'}, it follows that for every $k \in \mathbb{N}$ there exists $C(m,k) >0$ such that
$$
\big|(D^{g^{(n)}})^k\!A^{(n)}\big|_{g^{(n)}} \leq C(m,k) \big|A^{(n)}\big|_{g^{(n)}}^{k+1} \,\,.
$$
By \eqref{eq:An} and \eqref{eq:pr_t}, this completes the proof.
\end{proof}

By means of Lemma \ref{lem:Akth}, we can strengthen the Gromov--Hausdorff convergence of Proposition \ref{prop:GHbases}  to $\mathcal{C}^{\infty}$ convergence as follows.

\begin{corollary} \label{cor:CGbases}
Consider the framework of Remark \ref{rem:thm41}. Then, the manifolds $(B,\check{g}^{(n)})$ subconverge in the $\mathcal{C}^{\infty}$-topology to $(B,\check{g}^{(\infty)})$ as $n \to +\infty$.
\end{corollary}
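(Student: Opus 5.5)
The plan is to upgrade the Gromov--Hausdorff convergence of Proposition \ref{prop:GHbases} by a compactness argument for the base metrics $\check{g}^{(n)}$, using that the curvature of the bases is controlled through O'Neill's formulas. Since $\pi:(M,g^{(n)})\to(B,\check{g}^{(n)})$ is a Riemannian submersion with totally geodesic fibers (Lemma \ref{lem:bn}(ii)), the O'Neill $T$-tensor vanishes. The curvature of $\check{g}^{(n)}$ on horizontal vectors is then given by $\mathrm{Rm}(g^{(n)})$ restricted to $\mathfrak{b}^{(n)}$ plus quadratic terms in $A^{(n)}$, as in \cite[Theorem 9.28]{Bes08}. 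More generally, the $k$-th covariant derivative $(D^{\check{g}^{(n)}})^k\mathrm{Rm}(\check{g}^{(n)})$, pulled back horizontally, is a universal polynomial expression in the horizontal parts of $(D^{g^{(n)}})^i\mathrm{Rm}(g^{(n)})$ and $(D^{g^{(n)}})^jA^{(n)}$ for $i,j\le k+1$. This uses that $D^{\check{g}}_XY$ lifts to the horizontal part of $D^g_XY$, the other part being $A_XY$. Combining the $\{\Gamma_k\}$-bounded geometry of $g^{(n)}$ with Lemma \ref{lem:Akth}, I obtain uniform bounds $\|\mathrm{Rm}(\check{g}^{(n)})\|_{\mathcal{C}^k(B,\check{g}^{(n)})}\le \check{\Gamma}_k$ for all $n$ large. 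The diameter satisfies $\mathrm{diam}(B,\check{g}^{(n)})\le D$, since Riemannian submersions do not increase distances.

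Next I need a uniform lower bound on the injectivity radius of $(B,\check{g}^{(n)})$, so that Cheeger--Gromov compactness applies. By homogeneity, the volume of unit balls is $\mathrm{vol}(B,\check{g}^{(n)})$ times a ratio controlled by the diameter and the lower curvature bound (Bishop--Gromov). The volume itself converges to $\mathrm{vol}(B,\check{g}^{(\infty)})>0$ by Colding's volume continuity \cite[Theorem 0.1]{Col97}, exactly as in the last step of the proof of Lemma \ref{lem:binfty}. Cheeger's lemma then bounds the injectivity radius from below uniformly.

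Alternatively, and more concretely, Corollary \ref{cor:conv} gives $\check{g}^{(n)}\to\check{g}^{(\infty)}$ as inner products, once $\mathfrak{b}^{(n)}$ is identified with $\mathfrak{b}^{(\infty)}$ via $\mathrm{pr}_{\mathfrak{b}^{(\infty)}}$. Since $\mathrm{d}\pi|_{e\mathsf{H}}$ restricted to $\mathfrak{b}^{(n)}$ is precisely this projection, and $\check{g}^{(n)}$ is the $\mathsf{G}$-invariant metric on $B=\mathsf{G}/\mathsf{H}\mathsf{T}$ determined by the pushed-forward inner product on $T_{e\mathsf{H}\mathsf{T}}B\simeq\mathfrak{b}^{(\infty)}$, the $\mathrm{Ad}(\mathsf{H}\mathsf{T})$-invariant inner products $\check{g}^{(n)}$ converge to $\check{g}^{(\infty)}$ on a fixed vector space. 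On a fixed compact homogeneous space, convergence of the invariant inner products at the origin implies smooth convergence of the invariant metrics: in a local section $\sigma$ of $\mathsf{G}\to\mathsf{G}/\mathsf{H}\mathsf{T}$, the metric coefficients are $\check{g}^{(n)}_{e}\big(\mathrm{Ad}(\sigma(b)^{-1})\,\cdot\,,\,\cdot\,\big)$-type expressions that depend linearly on $\check{g}^{(n)}_e$ with smooth coefficients. Hence the identity map of $B$ realizes the $\mathcal{C}^\infty$-convergence.

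I would present the second route as the main argument, using the curvature bounds only as a consistency check. The main point to verify carefully is the identification step. The inner product $\check{g}^{(n)}$ lives on $\mathfrak{b}^{(n)}$, and it must be transported to $T_{e\mathsf{H}\mathsf{T}}B$ through $\mathrm{d}\pi$, which equals $\mathrm{pr}_{\mathfrak{b}^{(\infty)}}|_{\mathfrak{b}^{(n)}}$. This map is an isomorphism for large $n$ because $\mathfrak{b}^{(n)}\to\mathfrak{b}^{(\infty)}$ is transverse to $\mathfrak{t}$ (Lemma \ref{lem:binfty}). Corollary \ref{cor:conv} then gives that the transported forms converge to $\check{g}^{(\infty)}$, which is all that is needed.
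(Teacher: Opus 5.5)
Your proposal is correct. Your first route is essentially the paper's proof. The paper combines the Gromov--Hausdorff convergence of Proposition \ref{prop:GHbases} with uniform $\{\check{\Gamma}_k\}$-bounded geometry of $(B,\check{g}^{(n)})$, obtained from Lemma \ref{lem:Akth} and O'Neill's formulas, and then invokes Cheeger--Gromov compactness. The paper leaves the non-collapsing of the bases implicit in the remark that the limit is a manifold of the same dimension; your Colding/Bishop--Gromov/Cheeger step makes that explicit.

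Your second route, which you choose as the main argument, is genuinely different and more elementary. It needs no curvature control on the bases at all. It uses only the first-order information \eqref{eq:almsub}, repackaged in Corollary \ref{cor:conv}, together with one structural fact: the $\mathsf{G}$-invariant symmetric $2$-tensors on $B$ form a finite-dimensional space of smooth tensors, identified with $S^2(T_{e\mathsf{H}\mathsf{T}}B)^{\mathrm{Ad}(\mathsf{H}\mathsf{T})}$ by evaluation at the origin. Hence convergence at the origin is equivalent to $\mathcal{C}^{\infty}$-convergence.

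What this buys you is convergence with the identity of $B$ as comparison map and the base point $\check{o}$ fixed. This is exactly the form in which the corollary is used in the proof of Theorem \ref{thm:equivGH3}, where normal charts $\eta^{(n)}$ of $\check{g}^{(n)}$ are centered at the same $\check{o}$. Cheeger--Gromov compactness, by contrast, only yields some diffeomorphisms. The paper's route, in turn, does not use that all the $\check{g}^{(n)}$ lie in one finite-dimensional family. It would therefore apply to any bases with bounded geometry and a Gromov--Hausdorff limit of the same dimension.

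Two small points. First, $\mathrm{pr}_{\mathfrak{b}^{(\infty)}}|_{\mathfrak{b}^{(n)}}$ is an isomorphism for every $n$, not only for large $n$, since $\mathfrak{b}^{(n)}$ is by definition a complement of $\mathfrak{t}=\ker(\mathrm{d}\pi|_{e\mathsf{H}})$ in $\mathfrak{m}$. Second, you can bypass Lemma \ref{lem:binfty} and Corollary \ref{cor:conv} entirely. We have $|v|_{g^{(n)}}=\big|\mathrm{d}\pi|_{e\mathsf{H}}(v)\big|_{\check{g}^{(n)}}$ for $v\in\mathfrak{b}^{(n)}$, and $\mathrm{d}\pi|_{e\mathsf{H}}$ maps $\mathfrak{b}^{(n)}$ onto $T_{e\mathsf{H}\mathsf{T}}B$. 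So \eqref{eq:almsub} directly gives $(1-\epsilon^{(n)})|w|_{\check{g}^{(n)}}\le|w|_{\check{g}^{(\infty)}}\le(1+\epsilon^{(n)})|w|_{\check{g}^{(n)}}$ for all $w\in T_{e\mathsf{H}\mathsf{T}}B$, with no further extraction of subsequences.
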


\begin{proof}
By Proposition \ref{prop:GHbases}, the manifolds $(B,\check{g}^{(n)})$ converge in the Gromov--Hausdorff topology to $(B,\check{g}^{(\infty)})$, which is a Riemannian manifold of the same dimension. Moreover, since $(M,g^{(n)})$ has $\{\Gamma_k\}$-bounded geometry, by Lemma \ref{lem:Akth} and \cite[Theorem 9.28]{Bes08}, it follows that there exists a sequence of positive numbers $\{\check{\Gamma}_k\}$, depending only on $\big(m,s,\{\Gamma_k\}\big)$, such that $(B,\check{g}^{(n)})$ has $\{\check{\Gamma}_k\}$-bounded geometry for all $n \in \mathbb{N}$. Thus, the claim follows from the Cheeger--Gromov Compactness Theorem.
\end{proof}

We are ready to prove Theorem \ref{thm:equivGH3}.

\begin{proof}[Proof of Theorem \ref{thm:equivGH3}]
Let us denote by $\big(\mathbb{E},\tilde{g}^{(n)}\big)$ the geometric model of $(M,g^{(n)})$, with origin $o^{(n)}$ and local isometry $\varphi^{(n)}: (\mathbb{E},\tilde{g}^{(n)}) \to (M,g^{(n)})$ as in \eqref{eq:geom-mod}. By \cite[Corollary 3.8]{Ped22}, up to passing to a subsequence, the manifolds $(\mathbb{E}, \tilde{g}^{(n)})$ converge in the pointed $\mathcal{C}^{\infty}$-topology to a limit geometric model $(\mathbb{E}, \tilde{g}^{(\infty)})$, with origin $o^{(\infty)}$, as $n \to +\infty$. We need to show that $(\mathbb{E}, \tilde{g}^{(\infty)})$ is locally isometric to $\mathbb{R}^s \times (M/\mathsf{T}, \check{g}^{(\infty)})$. \smallskip

To begin with, we notice that the local submersions
$$
\widetilde{\pi}^{(n)} : (\mathbb{E},\tilde{g}^{(n)}) \to (B,\check{g}^{(n)}) \,\, , \quad \widetilde{\pi}^{(n)} \coloneqq \pi \circ \varphi^{(n)}
$$
are $\{\Gamma_k\}$-bounded, that $(\mathbb{E},\tilde{g}^{(n)})$ converges in the pointed $\mathcal{C}^{\infty}$-topology to $(\mathbb{E}, \tilde{g}^{(\infty)})$ and that $(B,\check{g}^{(n)})$ converges in the pointed $\mathcal{C}^{\infty}$-topology to $(B,\check{g}^{(\infty)})$ (see Corollary \ref{cor:CGbases}). Therefore, up to passing to a subsequence, it follows by the Arzel\`a--Ascoli Theorem that $\widetilde{\pi}^{(n)}$ converges in the $\mathcal{C}^{\infty}$-topology to a smooth map
\begin{equation} \label{eq:limitpi}
\widetilde{\pi}^{(\infty)} : (\mathbb{E},\tilde{g}^{(\infty)}) \to (B,\check{g}^{(\infty)})
\end{equation}
(see Remark \ref{rem:convf}). Next, notice that $\widetilde{\pi}^{(n)}$ is a Riemannian submersion, and so, for every $n \in \mathbb{N}$, the linear map
$$
{\rm d}\widetilde{\pi}^{(n)}|_{o^{(n)}} : (T_{o^{(n)}}\mathbb{E},\tilde{g}^{(n)}_{o^{(n)}}) \to (T_{\check{o}}B,\check{g}^{(n)}_{\check{o}})
$$
admits a right-inverse $S^{(n)}$ with the following properties: \begin{itemize}
\item[$\bcdot$] the image of $S^{(n)}$ is ${\rm Im}(S^{(n)}) = {\rm ker}({\rm d}\widetilde{\pi}^{(n)}|_{o^{(n)}})^{\perp_{\tilde{g}^{(n)}}}$;
\item[$\bcdot$] $S^{(n)}$ preserves the inner products, i.e., $(S^{(n)})^*(\tilde{g}^{(n)}_{o^{(n)}}) = \check{g}^{(n)}_{\check{o}}$.
\end{itemize}
Therefore, since $(\mathbb{E},\tilde{g}^{(n)})$ converges in the pointed $\mathcal{C}^{\infty}$-topology to $(\mathbb{E}, \tilde{g}^{(\infty)})$ and $(B,\check{g}^{(n)})$ converges in the pointed $\mathcal{C}^{\infty}$-topology to $(B,\check{g}^{(\infty)})$, up to passing to a subsequence, it follows that $S^{(n)}$ converges in the standard Euclidean topology to a linear map
$$
S^{(\infty)} : (T_{\check{o}}B,\check{g}^{(\infty)}_{\check{o}}) \to (T_{o^{(\infty)}}\mathbb{E},\tilde{g}^{(\infty)}_{o^{(\infty)}})
$$
 with the following properties: \begin{itemize}
\item[$\bcdot$] $S^{(\infty)}$ is the right inverse of ${\rm d}\widetilde{\pi}^{(\infty)}|_{o^{(\infty)}}$;
\item[$\bcdot$] $S^{(\infty)}$ preserves the inner products, i.e., $(S^{(\infty)})^*(\tilde{g}^{(\infty)}_{o^{(\infty)}}) = \check{g}^{(\infty)}_{\check{o}}$.
\end{itemize}
This proves that $\widetilde{\pi}^{(\infty)}$ is a Riemannian submersion. \smallskip

By claim ii) in Lemma \ref{lem:bn}, it follows that the fibers of $\widetilde{\pi}^{(\infty)}$ are totally geodesic. Moreover, Lemma \ref{lem:Akth} implies that the O'Neill tensor of \eqref{eq:limitpi} vanishes, hence the horizontal distribution of $\widetilde{\pi}^{(\infty)}$ is integrable. This implies that $(\mathbb{E},\tilde{g}^{(\infty)})$ is locally isometric to the Riemannian product of the fiber $F^{(\infty)} \coloneqq (\widetilde{\pi}^{(\infty)})^{-1}(\check{o})$ and the base $(B,\check{g}^{(\infty)})$. It remains to show that the fiber $F^{(\infty)}$ is flat. \smallskip

Since $\widetilde{\pi}^{(n)}$ converges to $\widetilde{\pi}^{(\infty)}$ in the $\mathcal{C}^{\infty}$-topology, it follows that the fibers $F^{(n)} \coloneqq (\widetilde{\pi}^{(n)})^{-1}(\check{o})$ converge in $\mathcal{C}^{\infty}$-topology to $F^{(\infty)}$. Indeed, consider a normal coordinate chart $\eta^{(n)}: \mathscr{B}_{\check{g}^{(n)}}^B(\check{o}, r) \to \mathbb{R}^{m-s}$ of $(B,\check{g}^{(n)})$ centered at $\check{o}$, set $\mathscr{U}^{(n)} \coloneqq (\widetilde{\pi}^{(n)})^{-1}\big(\mathscr{B}_{\check{g}^{(n)}}^B(\check{o}, r)\big) \subset \mathbb{E}$ and define
\begin{equation} \label{eq:convF}
\nu^{(n)} : \mathscr{U}^{(n)} \to \mathbb{R} \,\, , \quad \nu^{(n)}(x) \coloneqq \big|\eta^{(n)}\big(\widetilde{\pi}^{(n)}(x)\big)\big|^2 \,\, .
\end{equation}
Notice that, since the manifolds $(B,\check{g}^{(n)})$ converge smoothly to $(B,\check{g}^{(\infty)})$, we can choose a uniform radius $r>0$ to define $\eta^{(n)}$ and we can assume that the maps $\nu^{(n)}$ converge smoothly to a normal coordinate chart $\eta^{(\infty)}: \mathscr{B}_{\check{g}^{(\infty)}}^B(\check{o}, r) \to \mathbb{R}^{m-s}$ of $(B,\check{g}^{(\infty)})$ centered at $\check{o}$. Hence, the maps $\nu^{(n)}$ defined in \eqref{eq:convF} converge in the $\mathcal{C}^{\infty}$-topology to the map $\nu^{(\infty)} : \mathscr{U}^{(\infty)} \to \mathbb{R}$ defined by
$$
\mathscr{U}^{(\infty)} \coloneqq (\widetilde{\pi}^{(\infty)})^{-1}\big(\mathscr{B}_{\check{g}^{(\infty)}}^B(\check{o}, r)\big) \,\, , \quad
\phi^{(\infty)}(x) \coloneqq \big|\eta^{(\infty)}\big(\widetilde{\pi}^{(\infty)}(x)\big)\big|^2 \,\, .
$$
Since $F^{(n)} = (\nu^{(n)})^{-1}(0)$, $F^{(\infty)} = (\nu^{(\infty)})^{-1}(0)$ and the fact that $(\mathbb{E},\tilde{g}^{(n)})$ converges in the pointed $\mathcal{C}^{\infty}$-topology to $(\mathbb{E}, \tilde{g}^{(\infty)})$, the Implicit Function Theorem implies that the canonical embeddings $\imath_{F^{(n)}}: F^{(n)} \to (\mathbb{E},\tilde{g}^{(n)})$ of $F^{(n)}$ converge in the $\mathcal{C}^{\infty}$-topology to the the canonical embedding $\imath_{F^{(\infty)}}: F^{(\infty)} \to (\mathbb{E}, \tilde{g}^{(\infty)})$ of $F^{(\infty)}$. Finally, since $F^{(n)}$ is flat for all $n \in \mathbb{N}$, it follows that $F^{(\infty)}$ is also flat, and this concludes the proof.
\end{proof}

\medskip
\section{Convergence to a shrinking Ricci soliton}
\label{sect:main} \setcounter{equation} 0

This section is devoted to the proof of Theorem \ref{thm:last}, from which Theorem \ref{thm:MAIN} follows immediately. We remark that the conclusion of Theorem \ref{thm:MAIN} was already established in the non-collapsed case by \cite[Theorem 4.2]{BLS19}. Accordingly, we restrict our attention to collapsed solutions.

\begin{theorem} \label{thm:last}
Let $M^m = \mathsf{G}/\mathsf{H}$ be a homogeneous space, with $\mathsf{H}$ and $\mathsf{G}$ compact, connected Lie groups. Then, every collapsed, ancient $\mathsf{G}$-homogeneous Ricci flow $g(t)$ on $M$ admits a blow-down sequence $g^{(n)}(t)$ for which the following hold: \begin{itemize}
\item[i)] there exists a compact torus $\mathsf{T} \subset N_{\mathsf{G}}(\mathsf{H})/\mathsf{H}$ acting freely on $M$ such that the metrics $g^{(n)}(-1)$ converge, in the Gromov--Hausdorff topology, to an Einstein metric $\check{g}^{(\infty)}_o$ with positive scalar curvature on the quotient $M/\mathsf{T}$;
\item[ii)] the solutions $(M,g^{(n)}(t))$ converge locally in the $\mathcal{C}^{\infty}$-topology to the product of $(M/\mathsf{T},\check{g}^{(\infty)}(t))$ and a flat factor, where $\check{g}^{(\infty)}(t)$ is the ancient Ricci flow satisfying $\check{g}^{(\infty)}(-1) = \check{g}^{(\infty)}_o$.
\end{itemize}
\end{theorem}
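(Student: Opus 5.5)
We plan to combine the Type-I bounds with Theorem \ref{thm:equivGH1}, Theorem \ref{thm:equivGH3} and Theorem \ref{thm:Lottcompactness}, and then identify the limit flow using \cite[Theorem 4.2]{BLS19}. First, by \eqref{eq:typeI} the solution is Type-I, and by \cite[Theorem 17.2]{Ham95} its diameter is $O(\sqrt{|t|})$. Homogeneity together with the Shi-type estimates for Type-I ancient flows (the homogeneous flow is an ODE on $\mathscr{M}^{\mathsf{G}}$, so derivative bounds follow from Shi's local estimates on backward parabolic cylinders of size comparable to $|t|$) gives $|(D^{g})^k\mathrm{Rm}(g(t))|\,|t|^{1+k/2}\le C_k$ for $t\le -1$. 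Hence for any $\tau^{(n)}\to+\infty$ the blow-downs $g^{(n)}(t)=\frac1{\tau^{(n)}}g(\tau^{(n)}t)$ satisfy, for every compact $I\subset(-\infty,0)$, uniform bounds on diameter and on all derivatives of curvature, uniformly in $t\in I$. After a harmless rescaling normalizing $|\mathrm{sec}|\le1$ at $t=-1$, the sequence $g^{(n)}(-1)$ lies in $\mathscr{M}^{\mathsf{G}}(D,\{\Gamma_k\})$.

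Next we apply Theorem \ref{thm:equivGH1} and Theorem \ref{thm:equivGH3} to $g^{(n)}(-1)$. Passing to a subsequence, this yields a torus $\mathsf{T}^s$ and a $\mathsf{G}$-invariant metric $\check g^{(\infty)}_o$ on $B=M/\mathsf{T}$ with $g^{(n)}(-1)\to\check g^{(\infty)}_o$ in the Gromov--Hausdorff sense. It also gives local $\mathcal{C}^\infty$ convergence of the geometric models at time $-1$ to $\mathbb{R}^s\times(B,\check g^{(\infty)}_o)$. Since the solution is collapsed we have $F(g(t))\to0$, so the volume of $g^{(n)}(-1)$ tends to zero and $s>0$. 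Applying Theorem \ref{thm:Lottcompactness} along a further subsequence, the flows converge locally in the $\mathcal{C}^\infty$-topology to an ancient locally homogeneous limit flow $\tilde g^{(\infty)}(t)$ on $\mathbb{E}$, with $\tilde g^{(\infty)}(-1)$ locally isometric to $\mathbb{R}^s\times(B,\check g^{(\infty)}_o)$.

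We then need to identify the whole limit flow as $\mathbb{R}^s\times(B,h(t))$, where $h(t)$ is the homogeneous Ricci flow on $B$ with $h(-1)=\check g^{(\infty)}_o$. The product flow is a solution with this initial datum on the geometric model. Uniqueness of locally homogeneous Ricci flows with bounded curvature, via the ODE on the curvature data at the origin in the spirit of \cite{Ped22}, forces $\tilde g^{(\infty)}(t)$ to be locally isometric to this product for all $t$. Because the limit exists for all $t<0$, $h(t)$ is an ancient solution on the compact manifold $B$. It is non-collapsed: applying the same argument at every time $t_o$ shows that $g^{(n)}(t_o)$ converges in the Gromov--Hausdorff sense to $(B,h(t_o))$ with the same torus $\mathsf{T}$, since the torus is determined by the limit and $\mathsf{T}$-directions shrink. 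Then $h(t)/|t|$ has injectivity radius bounded below by uniform Type-I bounds on the fixed manifold $B$.

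The main obstacle is the final step, which is to upgrade from a non-collapsed ancient flow $h$ to an Einstein metric, and to do so with a single blow-down sequence. Our plan is to choose $\tau^{(n)}$ diagonally. By \cite[Theorem 4.2]{BLS19}, $\frac1{\sigma}h(\sigma t)$ converges as $\sigma\to\infty$ to the ancient flow $\check g(t)$ through positive Einstein metrics on $B$. Blow-downs of blow-down limits are themselves blow-down limits, so we pick $\tau^{(n)}=\sigma_n\tau^{(k_n)}$ with indices chosen so that the composite rescalings approximate $\check g$ in the local $\mathcal{C}^\infty$ and Gromov--Hausdorff senses. A final application of Theorem \ref{thm:equivGH1} and Theorem \ref{thm:equivGH3} to this diagonal sequence recovers the torus $\mathsf{T}$ and the Einstein limit $\check g^{(\infty)}_o$, yielding both i) and ii).
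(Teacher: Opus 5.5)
You have a genuine gap in the claim that the ancient flow $h(t)$ on $B=M/\mathsf{T}$ is automatically non-collapsed. The earlier steps are fine: Type-I and diameter bounds, Theorems \ref{thm:equivGH1} and \ref{thm:equivGH3} at $t=-1$, Theorem \ref{thm:Lottcompactness}, and identifying the limit as $\mathbb{R}^s\times(B,h(t))$ all match the paper.

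Your justification for non-collapse does not work. Gromov--Hausdorff convergence of $g^{(n)}(t_o)$ to $(B,h(t_o))$ for each fixed $t_o$, even with the same torus, gives nothing uniform as $t_o\to-\infty$. Type-I curvature bounds on $\frac1{|t|}h(t)$ over a fixed compact manifold also do not bound its injectivity radius from below. By Cheeger's lemma you would need $\mathrm{vol}\big(B,\frac1{|t|}h(t)\big)$, equivalently $F(h(t))$, bounded away from zero, and nothing you have established gives this. A collapsing Berger-type ancient flow on a fixed $S^3$ has every property you list.

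Concretely, $\mathsf{T}$ records only the directions that degenerate along the chosen sequence $\tau^{(n)}$. Directions in $\mathfrak{b}^{(\infty)}$ that stay non-degenerate along $\tau^{(n)}$ may still degenerate along $\sigma\tau^{(n)}$ as $\sigma\to\infty$, and that is exactly collapse of $h$. So your appeal to \cite[Theorem 4.2]{BLS19} for $h$ is unjustified. If $h$ is collapsed, its blow-downs do not converge to an Einstein metric on $B$; they collapse further.

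The missing ingredient is the paper's finite iteration. If $h$ is collapsed, take a blow-down limit of $\mathbb{R}^s\times(B,h(t))$. Blow-down limits of blow-down limits of $g$ are blow-down limits of $g$, so you can rerun Theorems \ref{thm:equivGH1}, \ref{thm:equivGH3} and \ref{thm:Lottcompactness} on $M$. This gives a limit locally isometric to $\mathbb{R}^{s+s'}\times(M/\tilde{\mathsf{T}},\check h_1(t))$ with $s'\ge 1$ and $\mathsf{T}\subsetneq\tilde{\mathsf{T}}\subset N_{\mathsf{G}}(\mathsf{H})/\mathsf{H}$.

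The torus dimension strictly increases and is bounded by $m$. After finitely many steps you therefore reach a \emph{non-collapsed} ancient flow on a compact quotient of positive dimension; it is positive because, by \eqref{eq:typeI}, blow-down limits are not flat. Only at that stage can you invoke \cite[Theorem 4.2]{BLS19}. After that, your diagonal choice of $\tau^{(n)}$ and the final application of Theorems \ref{thm:equivGH1} and \ref{thm:equivGH3} give i) and ii) as you describe.
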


\begin{proof}
Let $g(t)$ be a $\mathsf{G}$-invariant, collapsed, ancient Ricci flow on $M = \mathsf{G}/\mathsf{H}$. We assume that $M = \mathsf{G}/\mathsf{H}$ is almost-effective and that it is not diffeomorphic to a torus.

Consider a sequence of positive numbers $\tau^{(n)} \to +\infty$ and define the corresponding blow-down sequence
$$
g^{(n)}(t) \coloneqq \frac{1}{\tau^{(n)}}\, g\big(\tau^{(n)}t\big), \quad t \in (-\infty, 0) \,\, .
$$
In the interest of simplifying the notation, in what follows we will use the same notation for a sequence and its subsequences. \smallskip

By \cite[Corollary 2]{BLS19}, $g(t)$ satisfies \eqref{eq:typeI} and so, up to scaling, it follows that $\big|\mathrm{sec}(g^{(n)}(-1))\big| \leq 1$ for all $n \in \mathbb{N}$. Moreover, as a result of curvature bounds \eqref{eq:typeI}, it follows from \cite[Theorem 17.2]{Ham95} that $\mathrm{diam}(M, g(t)) = O\big(\sqrt{|t|}\big)$, and so there exists $D > 0$ such that $\mathrm{diam}\big(M,g^{(n)}(-1)\big) \leq D$ for all $n \in \mathbb{N}$. Therefore, by standard Shi estimates (see \cite{Ban87}), there exists a sequence of positive numbers $\{\Gamma_k\}$ such that $g^{(n)}(-1) \in \mathscr{M}^{\mathsf{G}}\big(D,\{\Gamma_k\}\big)$ for all $n \in \mathbb{N}$. By Theorem \ref{thm:equivGH1}, the manifolds $\big(M,g^{(n)}(-1)\big)$ subconverge in the Gromov--Hausdorff topology to $\big(B = M/\mathsf{T}^s,\, h_o\big)$ as $n \to +\infty$, for some torus $\mathsf{T}^s \subset \mathsf{N}_{\mathsf{G}}(\mathsf{H})/\mathsf{H}$ acting freely on the right on $M$. Furthermore, by Theorem \ref{thm:equivGH3}, the corresponding sequence of the geometric models of $\big(M,g^{(n)}(-1)\big)$ subconverges in the pointed $\mathcal{C}^\infty$-topology to a geometric model $\big(\mathbb{E},\, \tilde{h}_o\big)$ that is locally isometric to the Riemannian product $\mathbb{R}^s \times \big(B,\, h_o\big)$.  

By Theorem \ref{thm:Lottcompactness}, the blow-down flows $\big(M, g^{(n)}(t)\big)$, corresponding to the subsequence chosen in the previous paragraph, subconverge locally in the $\mathcal{C}^{\infty}$-topology to an ancient locally homogeneous Ricci flow solution $(\mathbb{E},\, \tilde{h}(t))$. By uniqueness of the limit, it follows that $(\mathbb{E},\, \tilde{h}(-1))$ is locally isometric to $\mathbb{R}^s \times \big(B,\, h_o\big)$. Hence, the limit solution $(\mathbb{E},\, \tilde{h}(t))$ is locally isometric to the Riemannian product
$$
\mathbb{R}^s \times \big(B,\, h(t)\big) \,\, ,
$$
where $h(t)$ is the ancient homogeneous Ricci flow on the compact manifold $B$ with $h(-1) = h_o$. 

If the factor $\big(B^{m-s}, h(t)\big)$ remains collapsed, we may consider a further blow-down limit in the local $\mathcal{C}^{\infty}$-topology, denoted by $\big(Y^{m-s}, h_1(t)\big)$. Note that the class of blow-down limits is stable under further blow-downs, that is, any blow-down limit of a blow-down limit of $(M, g(t))$ is itself a blow-down limit of $(M, g(t))$. Therefore, by applying the same argument as at the beginning of the proof, we can conclude that the Riemannian product $\mathbb{R}^s \times \big(Y^{m-s}, h_1(t)\big)$ is isometric to $\mathbb{R}^{s+s'} \times \big(M/\tilde{\mathsf{T}}^{s+s'}, \check{h}_1(t)\big)$, where $\mathsf{T}^{s} \subset \tilde{\mathsf{T}}^{s+s'} \subset \mathsf{N}_{\mathsf{G}}(\mathsf{H})/\mathsf{H}$, and $\check{h}_1(t)$ is an ancient solution on the quotient $M/\tilde{\mathsf{T}}^{s+s'}$. This procedure can be iterated finitely many times, until a non-collapsed limit solution is eventually obtained on the compact factor. Since $M$ is not diffeomorphic to a torus, such compact factor has positive dimension. \smallskip

We may therefore assume that $\big(B,\, h(t)\big)$ itself is non-collapsed. Thus, by \cite[Theorem 4.2]{BLS19}, every blow-down sequence along $h(t)$ converges in the $\mathcal{C}^{\infty}$-topology to an Einstein metric $\check{g}^{(\infty)}_o$ on $B$. Since the class of blow-down limits is stable under further blow-downs, this provides a blow-down sequence along $g(t)$ that converges locally in the $\mathcal{C}^{\infty}$-topology to the product of a flat factor and the ancient Ricci flow $\check{g}^{(\infty)}(t)$ on $B$ satisfying $\check{g}^{(\infty)}(-1) = \check{g}^{(\infty)}_o$. This proves claim $ii)$.  By abuse of notation, we still denote this blow-down sequence as $g^{(n)}(t)$.

It remains to show that the manifolds $(M,g^{(n)}(-1))$ converge in the Gromov--Hausdorff topology to $(B,\check{g}^{(\infty)}_o)$ as $n \to +\infty$. Assume by contradiction that this does not hold. Then, by Theorem \ref{thm:equivGH1}, the manifolds $(M,g^{(n)}(-1))$ subconverge in the Gromov--Hausdorff topology to a Riemannian manifold $(B_1,\check{g}^{(\infty)}_1)$ which is not isometric to $(B,\check{g}^{(\infty)}_o)$, but that would contradict Theorem \ref{thm:equivGH3}.
\end{proof}

\begin{remark}
By \cite[Theorem 2.1]{BWZ04}, collapsed ancient solutions to the homogeneous Ricci flow on compact manifolds occur only on the total space of principal torus bundles. In \cite[Theorem A]{KPS25}, a Lie algebraic proof of this fact was given, together with a precise characterization of the {\it collapsing torus} which corresponds to the directions where collapse occurs along a sequence of times (see \cite[Definition 3.2]{KPS25}). We observe that, up to passing to a subsequence, the torus $\mathsf{T} \subset N_{\mathsf{G}}(\mathsf{H})/\mathsf{H}$ appearing in the proof above coincides with the collapsing torus described in \cite{KPS25}.
\end{remark}

\appendix

\medskip
\section{Locally homogeneous gradient Ricci solitons}
\label{sect:appendix} \setcounter{equation} 0

This appendix provides a proof of the following result, which extends \cite[Theorem 1.1]{PetWy09} to the setting of locally homogeneous spaces.

\begin{theorem} \label{thm:appendix}
All locally homogeneous gradient Ricci solitons are rigid.
\end{theorem}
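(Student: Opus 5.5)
We follow the strategy of Petersen--Wylie \cite[Theorem 1.1]{PetWy09} and replace each appeal to completeness by local homogeneity. Let $(M,g)$ be locally homogeneous with $\mathrm{Ric}(g) + \mathrm{Hess}(f) = \lambda g$. Our target is local rigidity: near any point, $(M,g)$ splits isometrically as $N \times \mathbb{R}^k$ with $N$ Einstein of constant $\lambda$, and $f = \tfrac{\lambda}{2}|x|^2 + \langle a, x \rangle + c$ on the flat factor. Since local homogeneity forces $\mathrm{scal}$ and all curvature invariants to be constant, the usual soliton identities simplify. The contracted Bianchi identity gives $\mathrm{Ric}(\nabla f) = \tfrac12 \nabla \mathrm{scal} = 0$. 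Then $\mathrm{scal} + |\nabla f|^2 - 2\lambda f = \mathrm{const}$ yields $\nabla |\nabla f|^2 = 2\lambda \nabla f$. Taking the trace gives $\Delta f = m\lambda - \mathrm{scal}$, which is constant.

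\emph{Step 1.} We show that $\mathrm{Hess} f$ has constant eigenvalues. Since $\mathrm{Hess} f = \lambda g - \mathrm{Ric}$ and $\mathrm{Ric}$ is a locally homogeneous tensor, the eigenvalues are constant. Passing to the universal cover of a small ball (a simply connected open set), we may split $TM$ into smooth parallel-candidate eigendistributions $E_\mu$ of $\mathrm{Ric}$.

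\emph{Step 2.} We show that $\nabla f$ spans, together with its Hessian images, a parallel flat distribution. From $\mathrm{Ric}(\nabla f)=0$ we get $\mathrm{Hess} f(\nabla f) = \lambda \nabla f$. Differentiating $\mathrm{Ric}(\nabla f) = 0$ gives $(\nabla_X \mathrm{Ric})(\nabla f) + \mathrm{Ric}(\mathrm{Hess} f(X)) = 0$. Combined with the identity $\mathrm{div}\,\mathrm{Rm}$ expressed through $\mathrm{Rm}(\cdot,\cdot,\nabla f)$, this gives $R(X,Y)\nabla f = (\nabla_X \mathrm{Ric})(Y) - (\nabla_Y \mathrm{Ric})(X)$. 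Following Petersen--Wylie, consider $\ker \mathrm{Ric}$, which is the $\lambda$-eigenspace of $\mathrm{Hess} f$. We show it is parallel: for $V \in \ker \mathrm{Ric}$ we need $\nabla_X V \in \ker\mathrm{Ric}$. Here one uses that $\mathrm{Ric}\circ\mathrm{Hess} f = \mathrm{Hess} f\circ\mathrm{Ric}$ and that the eigenvalues are constant. The key computation is $\mathrm{tr}$ / norm identities such as $\Delta \tfrac12|\nabla f|^2 = |\mathrm{Hess}f|^2 + \mathrm{Ric}(\nabla f,\nabla f) + \langle \nabla f, \nabla\Delta f\rangle$. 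Using $|\nabla f|^2 = 2\lambda f + c$, the left side is $\lambda \Delta f$. This forces $|\mathrm{Hess} f|^2 = \lambda \,\mathrm{tr}\,\mathrm{Hess} f$, hence $\sum \mu(\mu-\lambda) = 0$ summed over Ricci eigenvalues $\lambda-\mu$. The same identity applied after differentiating $\mathrm{Ric}$ along $\nabla f$ (using $L_{\nabla f} \mathrm{Ric}$ and that $\mathrm{Ric}$ is constant along local Killing fields) is intended to force every eigenvalue of $\mathrm{Hess} f$ to be $0$ or $\lambda$.

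\emph{Step 3.} We conclude the proof. Once $\mathrm{Hess} f \in \{0,\lambda\}$ spectrally with constant multiplicities, the $\lambda$-eigenspace $E$ satisfies $\mathrm{Ric}|_E = 0$ and the $0$-eigenspace $E^\perp$ satisfies $\mathrm{Ric}|_{E^\perp} = \lambda$. Parallelism of $E$ follows from the Codazzi-type identity for $\mathrm{Hess} f$, namely $(\nabla_X \mathrm{Hess}f)(Y) - (\nabla_Y\mathrm{Hess} f)(X) = R(X,Y)\nabla f$, exactly as in Petersen--Wylie, which is local. The local de Rham theorem then gives a local product $N \times F$. On $F$, $\mathrm{Hess} f = \lambda g$ and $\mathrm{Ric} = 0$, and since $F$ is locally homogeneous and Ricci-flat along a factor carrying a function with $\mathrm{Hess} f = \lambda g$, $F$ is flat. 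If $\lambda = 0$, flatness holds because $\mathrm{Ric}_F=0$ and $F$ carries a parallel gradient, so $F$ splits further; otherwise apply the local Tashiro/Obata-type argument. Meanwhile $N$ is Einstein with constant $\lambda$.

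The main obstacle is Step 2, that is, showing the Hessian has only eigenvalues $0$ and $\lambda$ without completeness. Petersen--Wylie use completeness via the growth of $f$ along geodesics. We will instead exploit that $\nabla f$ is, up to modification by local Killing fields, controlled by finitely many constant quantities. In particular, $X(|\mathrm{Hess} f|^2) = 0$ and $\nabla_{\nabla f}\mathrm{Hess} f = -\nabla_{\nabla f}\mathrm{Ric}$ has constant norm. We will try first to show $\nabla_{\nabla f}\mathrm{Ric} = 0$ by differentiating $|\nabla f|^2 = 2\lambda f + c$ twice along $\nabla f$ and comparing with the constancy of Ricci invariants.
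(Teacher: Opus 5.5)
There is a genuine gap, and it is the one you flag yourself: Step 2 is never proved, and Step 3 rests on it. The only identity you actually derive is $|\mathrm{Hess} f|^2=\lambda\,\mathrm{tr}\,\mathrm{Hess} f$, equivalently $|\mathrm{Ric}|^2=\lambda\,\mathrm{scal}$. This is a single scalar relation among the eigenvalues $\mu_i$ of $\mathrm{Hess} f$. It is satisfied by a continuum of spectra and cannot force $\mu_i\in\{0,\lambda\}$.

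The further differentiation that ``is intended to force'' this is not carried out, and the input you propose for it is not available. $\nabla_{\nabla f}\mathrm{Ric}$ has no reason to have constant norm, because $\nabla f$ is not invariant under the local isometries; already $|\nabla f|^2=2\lambda f+\mathrm{const}$ is non-constant when $\lambda\neq 0$. More fundamentally, everything you use is curvature-homogeneous information: constant Ricci spectrum, $\mathrm{Ric}(\nabla f)=0$, and $[\mathrm{Ric},\mathrm{Hess} f]=0$. Constant Ricci eigenvalues alone never make eigendistributions parallel; the Hopf direction on a Berger sphere is a counterexample.

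Step 3 has the same problem. For $P=\mathrm{Hess} f$ with $P^2=\lambda P$, differentiating only shows that $\nabla P$ is block off-diagonal. The Codazzi identity then converts its skew part into $R(X,Y)\nabla f$, over which you have no control. So parallelism of the eigenspaces does not follow ``exactly as in Petersen--Wylie''.

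The missing idea is to let the local Killing fields act on $f$ itself, not only on $\mathrm{Ric}$. This is the Petersen--Wylie mechanism that the paper localizes.
\begin{itemize}
\item By Nomizu, near a point there are Killing fields $X$ spanning every tangent space.
\item Since $X$ is Killing, $\mathcal{L}_X$ commutes with the Hessian. Hence $\mathrm{Hess}(\mathrm{d}f(X))=\mathcal{L}_X(\lambda g-\mathrm{Ric})=0$, so $V_X=\nabla(\mathrm{d}f(X))$ is parallel.
\item Moreover $g(V_X,\nabla f)=\mathrm{Hess} f(\nabla f,X)=\tfrac12 X(|\nabla f|^2)=\lambda\,\mathrm{d}f(X)$, using $\mathrm{scal}+|\nabla f|^2-2\lambda f=c$ with $\mathrm{scal}$ constant. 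So for $\lambda\neq 0$ one has $V_X=0$ if and only if $\mathrm{d}f(X)=0$.
\item The span of the $V_X$ is a parallel flat distribution. It splits off locally, with no completeness needed, and $f$ splits accordingly.
\item After finitely many iterations, every Killing field of the remaining factor annihilates $f_2$. Local homogeneity then makes $f_2$ constant, so that factor is Einstein.
\end{itemize}
In this approach the $\{0,\lambda\}$-spectrum of $\mathrm{Hess} f$ is a consequence, not a required input.

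For $\lambda=0$ your Bochner identity does suffice, since it gives $\mathrm{Ric}=0$ directly. You still need the fact that Ricci-flat locally homogeneous metrics are flat (Spiro). The paper invokes this after reaching $\mathrm{Ric}=0$ by a Ricci flow argument.
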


Recall that a Riemannian manifold $(M^m, g)$ is called a {\it gradient Ricci soliton} if there exist a smooth function $f: M \to \mathbb{R}$ and a constant $\lambda \in \mathbb{R}$ such that
\begin{equation} \label{eq:soliton}
\mathrm{Ric}(g) + \mathrm{Hess}_g(f) = \lambda g \,\, .
\end{equation}
Here, $\mathrm{Hess}_g(f) \coloneqq D^g(\mathrm{d}f)$ denotes the {\it Hessian} of $f$. We also recall the following well-known formula, which does not require the metric to be complete.

\begin{lemma}
Let $(M,g)$ be a (possibly not complete) gradient Ricci soliton as in \eqref{eq:soliton}. Then, there exists a constant $c \in \mathbb{R}$ such that
\begin{equation} \label{eq:scalsol}
\mathrm{scal}(g) + |{\rm d}f|_g^2 - 2\lambda f = c \,\, .
\end{equation}
\end{lemma}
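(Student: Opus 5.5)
The plan is the classical Hamilton computation. It is purely local: it uses only the soliton equation \eqref{eq:soliton}, the contracted second Bianchi identity and commutation of covariant derivatives, so completeness plays no role. I will show that the differential of the function $\Phi \coloneqq \mathrm{scal}(g) + |{\rm d}f|_g^2 - 2\lambda f$ vanishes identically. Then $\Phi$ is locally constant, hence constant when $M$ is connected (as implicitly assumed). If $M$ were not connected, the statement would hold on each component.

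First, I take the trace of \eqref{eq:soliton} to get $\mathrm{scal}(g) + \Delta f = m\lambda$. Next, I take the divergence of \eqref{eq:soliton}. The contracted Bianchi identity gives $\mathrm{div}\,\mathrm{Ric} = \tfrac12 {\rm d}\,\mathrm{scal}$, and $\mathrm{div}\,g = 0$. For the Hessian, commuting derivatives via the Ricci identity gives
$$
\mathrm{div}\,\mathrm{Hess}_g(f) = {\rm d}(\Delta f) + \mathrm{Ric}(\nabla f,\cdot) \,\, .
$$
Therefore
$$
\tfrac12\,{\rm d}\,\mathrm{scal} + {\rm d}(\Delta f) + \mathrm{Ric}(\nabla f,\cdot) = 0 \,\, .
$$
Substituting ${\rm d}(\Delta f) = -{\rm d}\,\mathrm{scal}$ from the traced equation, this becomes
$$
\mathrm{Ric}(\nabla f,\cdot) = \tfrac12\,{\rm d}\,\mathrm{scal} \,\, .
$$

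On the other hand, ${\rm d}\big(|{\rm d}f|_g^2\big) = 2\,\mathrm{Hess}_g(f)(\nabla f,\cdot)$, and by \eqref{eq:soliton} this equals $2\lambda\,{\rm d}f - 2\,\mathrm{Ric}(\nabla f,\cdot)$, which in turn equals $2\lambda\,{\rm d}f - {\rm d}\,\mathrm{scal}$. Rearranging gives ${\rm d}\big(\mathrm{scal} + |{\rm d}f|^2 - 2\lambda f\big) = 0$, which proves \eqref{eq:scalsol}.

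The only point needing care is the sign convention in the Ricci-identity step $\nabla^i\nabla_i\nabla_j f = \nabla_j\Delta f + R_{jk}\nabla^k f$. I would verify it directly in local coordinates.
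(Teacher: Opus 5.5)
Your proof is correct and is essentially the paper's argument: both take the divergence of \eqref{eq:soliton} and combine the contracted Bianchi identity, the Bochner/Ricci commutation formula for the divergence of $\mathrm{Hess}_g(f)$, the traced soliton equation, and ${\rm d}\big(|{\rm d}f|_g^2\big) = 2\,\mathrm{Hess}_g(f)({\rm d}f^{\sharp},\cdot)$. The differences are only cosmetic: you use the analyst's sign conventions for divergence and Laplacian rather than Besse's $\delta_g$ and the non-negative $\Delta_g$, and you first isolate $\mathrm{Ric}(\nabla f,\cdot) = \tfrac12\,{\rm d}\,\mathrm{scal}(g)$ instead of substituting the soliton equation directly into the Bochner term.
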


\begin{proof}
Let $\delta_g$ denote the {\it divergence} of symmetric tensor fields (see, e.g., \cite[Section 1.59]{Bes08}). Using Bochner's identity together with the soliton equation, we compute
$$
\delta_g\big(\mathrm{Hess}_g(f)\big)
= \mathrm{d}\Delta_g f + \mathrm{Hess}_g(f)(\mathrm{d}f^{\sharp},\cdot) - \lambda\,\mathrm{d}f \,\, ,
$$
where $\Delta_g$ denotes the (non-negative definite) Laplace-Beltrami operator on functions. Moreover,
\begin{equation} \label{eq:Hess(f)}
\mathrm{d}\big(|\mathrm{d}f|_g^2\big)
= 2(D^g\mathrm{d}f)(\mathrm{d}f^{\sharp})
= 2\,\mathrm{Hess}_g(f)\big(\mathrm{d}f^{\sharp},\cdot\big),
\end{equation}
and, applying the soliton equation, we obtain
$$
\Delta_g f
= -\mathrm{Tr}_g\big(\mathrm{Hess}_g(f)\big)
= -\mathrm{Tr}_g\big(-\mathrm{Ric}(g) + \lambda g\big)
= \mathrm{scal}(g) - \lambda m \,\, .
$$
Notice that $\delta_g g = 0$ and that
$$
\delta_g\big(\mathrm{Ric}(g)\big) = -\tfrac12\,\mathrm{d}\big(\mathrm{scal}(g)\big)
$$
by the second Bianchi identity. Therefore, taking the divergence of the soliton equation yields
\begin{align*}
0 &= \delta_g\big(\mathrm{Ric}(g)\big) + \delta_g\big(\mathrm{Hess}_g(f)\big) \\
&= -\tfrac12\,\mathrm{d}\big(\mathrm{scal}(g)\big) + \mathrm{d}\big(\mathrm{scal}(g)\big)
+ \tfrac12\,\mathrm{d}\big(|\mathrm{d}f|_g^2\big) - \lambda\,\mathrm{d}f \\
&= \tfrac12\,\mathrm{d}\Big(\mathrm{scal}(g) + |\mathrm{d}f|_g^2 - 2\lambda f\Big) \,\, ,
\end{align*}
which proves the claim.
\end{proof}

We now prove the following proposition, which provides a local version of \cite[Proposition 1]{PetWy09}.

\begin{proposition} \label{prop:localPW}
Let $(M,g)$ be a (possibly not complete) gradient Ricci soliton as in \eqref{eq:soliton} and assume that $\mathrm{scal}(g)$ is constant. Let $X$ be a Killing vector field on $(M,g)$ and let $V \coloneqq \big({\rm d}({\rm d}f(X))\big){}^{\sharp}$ denote the gradient of ${\rm d}f(X)$. Then, $D^gV = 0$. Moreover, if $\lambda \neq 0$, then $V = 0$ if and only if $\mathrm{d}f(X) = 0$.
\end{proposition}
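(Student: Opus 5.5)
Set $u \coloneqq \mathrm{d}f(X) = X(f)$, so that $V = \nabla u$. The plan is to show that $\mathrm{Hess}_g(u) = 0$ by differentiating the soliton equation along $X$, which is purely local and so needs no completeness. Since $X$ is Killing, $\mathcal{L}_X g = 0$ and $\mathcal{L}_X \mathrm{Ric}(g) = 0$. Applying $\mathcal{L}_X$ to \eqref{eq:soliton} therefore gives $\mathcal{L}_X \mathrm{Hess}_g(f) = 0$. Since the flow of $X$ consists of local isometries, $\mathcal{L}_X$ commutes with $D^g$ and with $\mathrm{d}$. Hence
$$
0 = \mathcal{L}_X \mathrm{Hess}_g(f) = \mathrm{Hess}_g(\mathcal{L}_X f) = \mathrm{Hess}_g(u) = D^g V .
$$
This is the first claim. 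It holds locally on any open set where $X$ is defined, and constancy of $\mathrm{scal}$ is not even needed for it. One could also verify it in components via $\mathcal{L}_X \Gamma = 0$ for Killing fields.

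For the second claim, one direction is trivial: if $u = 0$ then $V = \nabla u = 0$. Conversely, suppose $V = 0$, so $u$ is locally constant. We use \eqref{eq:scalsol} with $\mathrm{scal}(g)$ constant, which reads $|\mathrm{d}f|_g^2 - 2\lambda f = c'$. Apply $X$ to this identity. Since $X$ is Killing, $X(|\mathrm{d}f|^2) = 2\,g(\nabla_X \nabla f, \nabla f)$. We rewrite this using $\nabla u = \nabla (g(X,\nabla f)) = \nabla_{\nabla f} X\text{-term} + \mathrm{Hess} f(X)$. More cleanly, $\mathcal{L}_X \mathrm{d}f = \mathrm{d}(\mathcal{L}_X f) = \mathrm{d}u$, and $\mathcal{L}_X g^{-1} = 0$, so
$$
X(|\mathrm{d}f|^2) = \mathcal{L}_X\big(g^{-1}(\mathrm{d}f,\mathrm{d}f)\big) = 2\,g^{-1}(\mathrm{d}u,\mathrm{d}f) = 2\,\mathrm{d}f(V).
$$
Applying $X$ to the identity then gives $2\,\mathrm{d}f(V) - 2\lambda u = 0$, i.e. $\lambda u = g(V,\nabla f)$. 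If $V = 0$ and $\lambda \neq 0$, this forces $u = 0$, which completes the equivalence.

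The step needing the most care is the commutation $\mathcal{L}_X \circ \mathrm{Hess}_g = \mathrm{Hess}_g \circ \mathcal{L}_X$ on an incomplete manifold. We justify it by using the local flow of $X$ near each point: its local diffeomorphisms are isometries wherever defined, so pullback commutes with $D^g$ there, and differentiating at time zero gives the identity pointwise. Everything else is direct computation.
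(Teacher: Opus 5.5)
Your proof is correct and follows essentially the same route as the paper. The first claim is the Petersen--Wylie argument: apply $\mathcal{L}_X$ to \eqref{eq:soliton} and use that $\mathcal{L}_X$ commutes with $\mathrm{Hess}_g$ for Killing $X$; the paper simply cites this. The second claim comes, as in the paper, from differentiating \eqref{eq:scalsol} along $X$ to get $g(V,\mathrm{d}f^\sharp)=\lambda\,\mathrm{d}f(X)$, and the only difference is bookkeeping: you use $\mathcal{L}_X g^{-1}=0$, whereas the paper rewrites $g(V,\mathrm{d}f^\sharp)$ as $\mathrm{Hess}_g(f)(\mathrm{d}f^\sharp,X)$. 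You may drop the discarded aside about the ``$\nabla_{\nabla f}X$-term''; also note that $X(\mathrm{scal}(g))=0$ holds automatically for Killing $X$, so constant scalar curvature is not really needed for the second claim either.
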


\begin{proof}
The first claim follows directly from \eqref{eq:soliton} (see \cite[Proposition 1]{PetWy09}). For the second claim, we compute
$$
g(V,{\rm d}f^{\sharp}) = \mathcal{L}_{{\rm d}f^{\sharp}}\big({\rm d}f(X)\big) = {\rm Hess}_g(f)\big(\mathrm{d}f^{\sharp},X\big) \overset{\eqref{eq:Hess(f)}}{=} \tfrac12\,\mathcal{L}_X\big(|\mathrm{d}f|_g^2\big) \,\, .
$$
Moreover, by \eqref{eq:scalsol} and the fact that $\mathrm{scal}(g)$ is constant, we obtain
$$
\tfrac12\,\mathcal{L}_X\big(|\mathrm{d}f|_g^2\big) =\lambda\,\mathrm{d}f(X) \,\, ,
$$
which completes the proof.
\end{proof}

Therefore, Proposition \ref{prop:localPW} allows to apply the same argument as in \cite[Theorem 2.3]{PetWy09} to prove Theorem \ref{thm:appendix}. For the convenience of the reader, we summarize it below.

\begin{proof}[Proof of Theorem \ref{thm:appendix}]
Let $(M,g_o)$ be a (possibly not complete) locally homogeneous, gradient Ricci soliton as in \eqref{eq:soliton}. Fix a point $p \in M$, denote by $\mathfrak{g}$ the Lie algebra of {\it Killing generators} of $(M,g_o)$ at $p$ (see \cite[Section 3]{Nom60}). By \cite[Theorem 2]{Nom60}, up to restricting $M$ to a neighborhood of $p$, every element $X \in \mathfrak{g}$ can be identified with a Killing vector field defined on all of $M$.

Assume first that $\lambda = 0$, and let $g(t)$ be the Ricci flow solution with $g(0)=g_o$. In this case, $g(t)$ evolves by pullback through a smooth $1$-parameter family of diffeomorphisms. Therefore, since ${\rm scal}(g_o)$ is constant, it follows that $\partial_t \,{\rm scal}(g(t)) = 0$. By local homogeneity, this forces $\mathrm{Ric}(g(t)) = 0$, which in turn implies that $g(t)$ is flat by \cite[Theorem 3.4]{Spi93}. 

Assume now that $\lambda \neq 0$ and define
$$
\mathfrak{t} \coloneqq \big\{\big({\rm d}({\rm d}f(X))\big){}^{\sharp} : X \in \mathfrak{g} \} \,\, .
$$
By Proposition \ref{prop:localPW}, $\mathfrak{t}$ is a Lie algebra of parallel vector fields on $(M,g_o)$. We denote by $\mathcal{V} \subset TM$ the distribution induced by $\mathfrak{t}$ and by $\mathcal{H}$ its $g_o$-orthogonal distribution. By construction, both $\mathcal{V}$ and $\mathcal{H}$ are parallel, and hence integrable. Moreover, the integral submanifolds tangent to $\mathcal{V}$ are flat. Therefore, up to restricting $M$ to a neighborhood of $p$, it follows that $(M,g_o)$ is isometric to the product $\mathbb{R}^s \times (B,\check{g})$, where $(B,\check{g})$ denotes the integral submanifold of $\mathcal{H}$ through $p$, endowed with the induced metric. By \cite[Lemma 2.1]{PetWy09}, the potential function $f$ splits as $f=(f_1,f_2)$, and
$$
\mathrm{Ric}(\check{g}) + \mathrm{Hess}_{\check{g}}(f_2) = \lambda \check{g} \,\, .
$$
Up to iterating this procedure finitely many times, we may assume that every Killing vector field $X'$ of $(B,\check{g})$ satisfies
$$
{\rm d}({\rm d}f_2(X')) = 0 \,\, ,
$$
which in turns implies that ${\rm d}f_2(X') = 0$ by Proposition \ref{prop:localPW}. Hence, every Killing vector field of $(B,\check{g})$ leaves $f_2$ invariant, and since $(B,\check{g})$ is locally homogenous, this forces $f_2$ to be constant. Consequently, $(B,\check{g})$ is Einstein.
\end{proof}

\medskip
\section{Symmetrization along the collapsed directions}
\label{sect:appendix2} \setcounter{equation} 0

For the sake of notation, we introduce the following definitions.

\begin{definition}
Let $\mathsf{G}$ be a compact Lie group and let $\mathsf{T}$ be a torus. A {\it $\mathsf{G}$-equivariant Riemannian principal $\mathsf{T}$-bundle} consists of the following data: \begin{itemize}
\item[$\bcdot$] a principal $\mathsf{T}$-bundle $\pi: M \to B = M/\mathsf{T}$ with compact base;
\item[$\bcdot$] an almost-effective left action of $\mathsf{G}$ on $M$ commuting with the right action of $\mathsf{T}$;
\item[$\bcdot$] a $\mathsf{G}$-invariant, though not necessarily $\mathsf{T}$-invariant, Riemannian metric $g$ on $M$;
\item[$\bcdot$] a metric $\check{g}$ on $B$ that is invariant under the left action of $\mathsf{G}$ induced by the projection $\pi$.
\end{itemize}
\end{definition}

\begin{definition} \label{def:coll-bound}
Let $\{\Gamma_k\}$ be a sequence of positive numbers, and let $\iota > 0$ and $0 < \epsilon \ll \iota$ be positive constants. A $\mathsf{G}$-equivariant Riemannian principal $\mathsf{T}$-bundle $\pi : (M,g) \to (B,\check{g})$ is said to be: \begin{itemize}
\item[$i)$] {\it $\epsilon$-collapsed} if the projection $\pi: (M, g) \to (B, \check{g})$ is an $\epsilon$-Gromov--Hausdorff approximation and an $\epsilon$-almost Riemannian submersion in the $\mathcal{C}^2$-sense;
\item[$ii)$] {\it $\big(\iota,\{\Gamma_k\}\big)$-bounded} if both $(M, g)$ and $(B, \check{g})$ have $\{\Gamma_k\}$-bounded geometry, the restriction of $g$ to each $\mathsf{T}$-orbit in $M$ is flat, the projection $\pi: (M, g) \to (B, \check{g})$ is $\{\Gamma_k\}$-bounded, and $\mathrm{inj}(B, \check{g}) \geq \iota$.
\end{itemize}
\end{definition}

Compact homogeneous spaces with vanishing Euler characteristic provide natural examples of  $\mathsf{G}$-equivariant Riemannian principal $\mathsf{T}$-bundles with flat fibers (see Proposition \ref{prop:flatorb}). The following is a special case of the more general definitions given in \cite{NaTian18}.

\smallskip

Given a principal $\mathsf{T}$-bundle $\pi: M \to B$, we call {\it trivializing chart for $\pi : M \to B$} every quadruple $(\mathscr{U},\Lambda,\psi,\eta)$ given by: \begin{itemize}
\item[$\bcdot$] an open set $\mathscr{U} \subset B$;
\item[$\bcdot$] a local trivialization $\psi : \pi^{-1}(\mathscr{U}) \to \mathscr{U} \times \mathsf{T}$ for $\pi: M \to B$;
\item[$\bcdot$] a lattice $\Lambda \subset \mathbb{R}^s$, called {\it covering group of $(\mathscr{U},\Lambda,\psi,\eta)$};
\item[$\bcdot$] a coordinate chart $\eta: \mathscr{U} \to \eta(\mathscr{U}) \subset \mathbb{R}^{m-s}$.
\end{itemize}
Given a trivializing chart $(\mathscr{U},\Lambda,\psi,\eta)$ for $\pi : M \to B$, we call {\it weak coordinate patch associated to $(\mathscr{U},\Lambda,\psi,\eta)$} the local diffeomorphism
\begin{equation} \label{eq:weakcoord}
\xi : \eta(\mathscr{U}) \times \mathbb{R}^s \to M \,\, , \quad \xi \coloneqq \psi^{-1} \circ (\mathrm{Id}_{\mathscr{U}} \times \varpi_{\Lambda}) \circ (\eta^{-1} \times \mathrm{Id}_{\mathbb{R}^s}) \,\, ,
\end{equation}
where $\varpi_{\Lambda} : \mathbb{R}^s \to \mathbb{R}^s/\Lambda \simeq \mathsf{T}$ denotes the Lie group covering map associated to $\Lambda$.

A {\it trivializing atlas for $\pi : M \to B$} is a collection $\mathcal{A} = \{(\mathscr{U}_{\alpha},\Lambda,\psi_{\alpha},\eta_{\alpha})\}$, where each element $(\mathscr{U}_{\alpha},\Lambda,\psi_{\alpha},\eta_{\alpha})$ is a trivializing chart for $\pi : M \to B$ with the same covering group $\Lambda$. In this case, we say $\Lambda$ is the {\it covering group of $\mathcal{A}$} and that $\mathcal{A}$ is {\it subordinate to the open covering $\{\mathscr{U}_{\alpha}\}$} of $B$.

\begin{definition}[c.f.\ \cite{NaTian18}, Definition 2.8 and Definition A.3]
Let $\pi: (M,g) \to (B,\check{g})$ be a $\mathsf{G}$-equivariant Riemannian principal $\mathsf{T}$-bundle. Let $r>0$ be a radius and let $\{L_k\}$ be a sequence of positive numbers. A trivializing chart $(\mathscr{U},\Lambda,\psi,\eta)$ for $\pi: (M,g) \to (B,\check{g})$ is called {\it $\big(r,\{L_k\}\big)$-bounded} if the following conditions hold.
\begin{itemize}
\item[$i)$] $\mathscr{U} = \mathscr{B}_{\check{g}}^B(b, r)$, for some $b \in B$, and $\eta$ is a normal coordinate chart of $(B,\check{g})$ centered at $b$.
\item[$ii)$] The pullback of $g$ via the weak coordinate patch $\xi : \mathscr{B}^{\mathbb{R}^{m-s}}(0, r) \times \mathbb{R}^s \to M$ associated to $(\mathscr{U},\Lambda,\psi,\eta)$ satisfies the following properties:
\begin{gather}
e^{-\frac{L_0}{10}}\delta_{ij} \leq (\xi^*g)_{ij}(x,y) \leq e^{\frac{L_0}{10}}\delta_{ij} \,\, \text{as bilinear forms for all $(x,y) \in \mathscr{B}^{\mathbb{R}^{m-s}}(0, r) \times \mathbb{R}^s$} \,\, , \label{eq:rLbounded1} \\
r^k\cdot\big\|(\xi^*g)_{ij}\big\|_{\mathcal{C}^k} \leq L_k \quad \text{ for all $1 \leq i, j \leq m$ and $k \in \mathbb{N}$} \,\, . \label{eq:rLbounded2}
\end{gather}
\end{itemize}
A trivializing atlas $\mathcal{A} = \{(\mathscr{U}_{\alpha},\Lambda,\psi_{\alpha},\eta_{\alpha})\}$ for $\pi: (M,g) \to (B,\check{g})$ is called {\it $\big(r,\{L_k\}\big)$-bounded} if each trivializing chart $(\mathscr{U}_{\alpha},\Lambda,\psi_{\alpha},\eta_{\alpha})$ of $\mathcal{A}$ is $\big(r,\{L_k\}\big)$-bounded.
\end{definition}

\begin{remark} \label{rmk:diagram}
Let $\pi : (M,g) \to (B,\check{g})$ be a $\mathsf{G}$-equivariant Riemannian principal $\mathsf{T}$-bundle, fix a $\big(r,\{L_k\}\big)$-bounded trivializing chart $(\mathscr{U},\Lambda,\psi,\eta)$ and denote by $\xi$ the associated weak coordinate patch. Let also $X \to \pi^{-1}(\mathscr{U})$ be the universal covering map of $\pi^{-1}(\mathscr{U})$. Then, $\psi$ lifts to a diffeomorphism $\tilde{\psi} : X \to \mathscr{U} \times \mathbb{R}^s$, which intertwines the deck action of $\pi_1\big(\pi^{-1}(\mathscr{U})\big)$ on $X$ with the right action of $\Lambda$ on $\mathscr{U} \times \mathbb{R}^s$. More precisely, there exists a group isomorphism $\jmath : \pi_1\big(\pi^{-1}(\mathscr{U})\big) \to \Lambda$ such that, for every $x \in X$ and $\gamma \in \pi_1\big(\pi^{-1}(\mathscr{U})\big)$, if $\tilde{\psi}(x) = (b,y) \in \mathscr{U} \times \mathbb{R}^s$, then $\tilde{\psi}(x \cdot \gamma) = (b,\, y + \jmath(\gamma))$. Since the following diagram is commutative, it follows that the pulled back metric $\xi^*g$ is $\Lambda$-invariant.

$$\begin{tikzpicture}[>=Latex,node/.style={}]
\node[node] (A) at (0,4) {$X$};
\node[node] (B) at (4,4) {$\mathscr{U} \times \mathbb{R}^s$};
\node[node] (C) at (9,4) {$\mathscr{B}^{\mathbb{R}^{m-s}}(0, r) \times \mathbb{R}^s$};
\node[node] (D') at (-1,2) {$M \supset$};
\node[node] (D) at (0,2) {$\pi^{-1}(\mathscr{U})$};
\node[node] (E) at (4,2) {$\mathscr{U} \times \mathsf{T}$};
\node[node] (F') at (-0.5,0) {$B \supset$};
\node[node] (F) at (0,0) {$\mathscr{U}$};
\draw[->] (A) -- node[above]{$\tilde{\psi}$} (B);
\draw[->] (B) -- node[above]{$\eta \times \mathrm{Id}_{\mathbb{R}^s}$} (C);
\draw[->] (D) -- node[above, pos=0.55]{$\psi$} (E);
\draw[->] (A) -- node[left]{} (D);
\begin{scope}[on background layer]
\draw[->] (B) -- node[left, pos=0.3]{$\mathrm{Id}_{\mathscr{U}} \times \varpi_{\Lambda}$} (E);
\end{scope}
\draw[->] (D) -- node[left]{$\pi$} (F);
\draw[->,thick,preaction={draw=white, line width=4pt}] (C) -- node[above, pos=0.4]{$\xi$} (D);
\draw[->] (E) -- node[above]{${\rm pr}_1$} (F);
\end{tikzpicture}$$
\end{remark}

The following result, which is a special case of the more general theorem stated in \cite{NaTian18}, establishes the existence of $\big(r,\{L_k\}\big)$-bounded trivializing atlases for $\mathsf{G}$-equivariant Riemannian principal $\mathsf{T}$-bundles, under suitable geometric assumptions.

\begin{theorem}[c.f.\ \cite{NaTian18}, Theorem A.2] \label{thm:weakcoord}
Let $\{\Gamma_k\}$ be a sequence of positive numbers, and let $\iota > 0$ and $0 < \epsilon \ll \iota$ be positive constants. Let $\pi: (M,g) \to (B,\check{g})$ be an $\epsilon$-collapsed, $\big(\iota,\{\Gamma_k\}\big)$-bounded $\mathsf{G}$-equivariant principal $\mathsf{T}$-bundle. Then there exist $r > 0$ and a sequence of positive numbers $\{L_k\}$, both depending only on the data $(m, s, \{\Gamma_k\}, \iota, \mathsf{G})$ and independent of $\epsilon$, such that for every open cover $\{\mathscr{U}_i = \mathscr{B}_{\check{g}}^B(b_i, r)\}$ of $B$, there exists a $\big(r,\{L_k\}\big)$-bounded trivializing atlas for $\pi: (M,g) \to (B,\check{g})$ subordinate to the cover $\{\mathscr{U}_i\}$.
\end{theorem}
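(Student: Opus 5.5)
The plan is to build the atlas chart by chart, with each chart centred at one of the balls $\mathscr{U}_i = \mathscr{B}^B_{\check{g}}(b_i,r)$. Homogeneity should make all charts essentially equal, because $\mathsf{G}$ acts transitively on $B$, commutes with $\mathsf{T}$, and acts by isometries of both $g$ and $\check{g}$. So it suffices to construct one $\big(r,\{L_k\}\big)$-bounded trivializing chart at a base point $\check{o}=\pi(e\mathsf{H})$ and then transport it. Given a chart $(\mathscr{U},\Lambda,\psi,\eta)$ at $\check{o}$ and $a\in\mathsf{G}$ with $a\cdot\check{o}=b_i$, set $\mathscr{U}_i=a\cdot\mathscr{U}$, $\psi_i=(a\times\mathrm{Id}_{\mathsf{T}})\circ\psi\circ a^{-1}$ and $\eta_i=\eta\circ a^{-1}$. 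Then $\eta_i$ is a normal chart of $(B,\check{g})$ centred at $b_i$, the covering group $\Lambda$ is unchanged, and $\xi_i^*g=\xi^*g$. Hence \eqref{eq:rLbounded1} and \eqref{eq:rLbounded2} transfer verbatim, and all charts of the atlas share the same $\Lambda$.

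For the single chart I would follow the argument of \cite[Theorem A.2]{NaTian18}, specialised to torus fibres. First fix $r\le\iota/2$ (also small with respect to $\Gamma_0$), so that $\eta$ is a normal chart on $\mathscr{B}^B_{\check{g}}(\check{o},r)$ with $\mathcal{C}^k$-bounds depending only on $\iota$ and $\{\Gamma_k\}$, by the bounded geometry of $(B,\check{g})$. Next build a local trivialization $\psi$ over $\mathscr{U}$ by horizontally lifting the radial geodesics of $\eta$ for the $g$-orthogonal complement of the fibres. The $\{\Gamma_k\}$-bound on $\pi$ and the $\mathcal{C}^2$ almost-submersion property then control this lift in $\mathcal{C}^k$. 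After this, pass to the universal cover as in Remark \ref{rmk:diagram}. The lattice $\Lambda\subset\mathbb{R}^s$ is taken as the deck group of the fibre over $\check{o}$, and it is exactly a lattice because the fibre $\pi^{-1}(\check{o})$ is a flat torus (Proposition \ref{prop:flatorb}). I would choose the identification $\mathbb{R}^s/\Lambda\simeq\mathsf{T}$ so that the $\mathbb{R}^s$-directions are $g$-orthonormal at the origin. With this choice $\Lambda$ shrinks with $\epsilon$, but the metric on the $\mathbb{R}^s$ factor stays normalised.

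The main obstacle is the uniformity in $\epsilon$ of the bounds \eqref{eq:rLbounded1} and \eqref{eq:rLbounded2} in the vertical directions, since the fibres collapse. Here I would exploit the following facts: on the universal cover, $\xi^*g$ is a metric on $\mathscr{B}^{\mathbb{R}^{m-s}}(0,r)\times\mathbb{R}^s$ with $|\mathrm{Rm}|$ and its derivatives bounded by $\Gamma_k$; its injectivity radius is bounded below by a constant depending only on $(m,\Gamma_0,r)$, because the lift is no longer collapsed; and the $\mathbb{R}^s$-factor acts by translations along flat orbits. Harmonic or normal coordinates on this unwrapped space then give \eqref{eq:rLbounded2} with constants independent of $\epsilon$ (Jost--Karcher-type estimates), and the comparison with the coordinates of $\xi$ follows from the $\{\Gamma_k\}$-bound on $\pi$. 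If a direct argument fails, I would instead quote the construction of \cite{NaTian18} for the one chart at $\check{o}$ and use only the homogeneity transport described above, noting that the dependence on $\mathsf{G}$ enters only through the requirement that the charts be compatible with the $\mathsf{G}$-action.
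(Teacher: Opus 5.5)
The paper gives no proof of this statement; it imports it as a special case of \cite[Theorem A.2]{NaTian18}. Your fallback of quoting \cite{NaTian18} is therefore the paper's own route. In that case your homogeneity transport is correct but redundant, because the cited result already gives bounded charts over every ball of an arbitrary cover. The transport computation itself is right: $\psi_i$ is $\mathsf{T}$-equivariant because the left and right actions commute, and $\xi_i=a\circ\xi$, so $\xi_i^*g=\xi^*g$ with the same $\Lambda$. However, it needs $\mathsf{G}$ to act transitively on $B$, and that is not a hypothesis. A $\mathsf{G}$-equivariant Riemannian principal $\mathsf{T}$-bundle only carries an almost-effective $\mathsf{G}$-action commuting with $\mathsf{T}$ (the trivial group is allowed). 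Transitivity holds in the paper's application to $M=\mathsf{G}/\mathsf{H}$, but without it you prove less than what is stated.

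As a self-contained argument, your single-chart construction has a genuine gap exactly at the step you call the main obstacle.

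\emph{(a) The injectivity radius bound.} You justify the bound on the unwrapped space $\mathscr{B}^{\mathbb{R}^{m-s}}(0,r)\times\mathbb{R}^s$ only by saying ``the lift is no longer collapsed'', which restates the claim. Simple connectivity alone gives nothing: collapsing Berger spheres are simply connected with bounded curvature. You need an actual argument, for instance one of the following:
\begin{itemize}
\item a proof that no short geodesic loop is null-homotopic in $\pi^{-1}(\mathscr{U})$, which is the Cheeger--Fukaya--Gromov input that \cite{NaTian18} packages;
\item a volume lower bound for balls upstairs, built from the intrinsically complete flat $\mathbb{R}^s$-fibres, the almost-submersion property and $\mathrm{inj}(B,\check{g})\geq\iota$.
\end{itemize}

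\emph{(b) The bounds in the prescribed coordinates.} This is the more serious point. Good harmonic coordinates upstairs do not give \eqref{eq:rLbounded1}--\eqref{eq:rLbounded2} in the prescribed coordinates $\xi(x,y)=\sigma(\eta^{-1}(x))\cdot\varpi_{\Lambda}(y)$, where $\sigma$ is the section defining $\psi$. Note that $\psi$ must come from a single section extended by the right action. Lifting horizontally from every point of the fibre is not $\mathsf{T}$-equivariant, since the $g$-horizontal distribution is not $\mathsf{T}$-invariant.

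The $\{\Gamma_k\}$-bound on $\pi$ controls only the $x$-coordinates. The $y$-coordinates are governed by the fundamental fields $V_j^*$ of the right action, where $V_j\in\mathfrak{t}$ corresponds to the $j$-th coordinate direction. Since $g$ is not assumed $\mathsf{T}$-invariant, these fields are not Killing, and the $\mathbb{R}^s$-translations are not isometries of $\xi^*g$; only $\Lambda$ acts isometrically. Because $\partial_{y^j}(\xi^*g)=\xi^*(\mathcal{L}_{V_j^*}g)$, you need $\|\mathcal{L}_{V_j^*}g\|_{\mathcal{C}^k}$ bounded uniformly in $\epsilon$ for $g$-normalized $V_j$. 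That is exactly the almost $\mathsf{T}$-invariance which Theorem \ref{thm:g-fg} extracts \emph{from} this statement, so it cannot be presupposed here.

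Without the citation, your proposal supplies neither (a) nor (b).
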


By means of Theorem \ref{thm:weakcoord}, it is possible to prove the following estimate, from which Theorem \ref{thm:equivGH2} follows immediately.

\begin{theorem} \label{thm:g-fg}
Let $\{\Gamma_k\}$ be a sequence of positive numbers, and let $\iota > 0$ and $0 < \epsilon \ll \iota$ be positive constants. Let $\pi: (M,g) \to (B,\check{g})$ be an $\epsilon$-collapsed, $\big(\iota,\{\Gamma_k\}\big)$-bounded $\mathsf{G}$-equivariant principal $\mathsf{T}$-bundle. Then there exists a sequence of positive numbers $\{C_k\}$, depending only on the data $(m, s, \{\Gamma_k\}, \iota, \mathsf{G})$ and independent of $\epsilon$, such that
$$
\big\|g -f^*g\big\|_{\mathcal{C}^k} \leq C_k\, \epsilon \quad \text{for all $f \in \mathsf{T}$ and $k \geq 0$} \,\, .
$$
\end{theorem}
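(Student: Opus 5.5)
The plan is to work chart by chart, using the $\big(r,\{L_k\}\big)$-bounded trivializing atlas from Theorem \ref{thm:weakcoord}. Every quantity that appears will then depend only on $(m,s,\{\Gamma_k\},\iota,\mathsf{G})$. Cover $B$ by balls $\mathscr{U}_i=\mathscr{B}^B_{\check g}(b_i,r)$ and fix one chart $(\mathscr{U},\Lambda,\psi,\eta)$ with weak coordinate patch $\xi$. By Remark \ref{rmk:diagram}, the pulled-back metric $\tilde g\coloneqq\xi^*g$ on $\mathscr{B}^{\mathbb{R}^{m-s}}(0,r)\times\mathbb{R}^s$ is $\Lambda$-invariant. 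Its components are uniformly equivalent to $\delta_{ij}$ and have uniformly bounded $\mathcal{C}^k$-norms, by \eqref{eq:rLbounded1} and \eqref{eq:rLbounded2}. In these coordinates, the right action of $f\in\mathsf{T}=\mathbb{R}^s/\Lambda$ lifts to a translation $\tau_v(x,y)=(x,y+v)$, with $v\in\mathbb{R}^s$ a representative of $f$. So the statement reduces to estimating $\|\tilde g-\tau_v^*\tilde g\|_{\mathcal{C}^k}$ on the chart, and transferring this back to $M$ with constants depending only on $\{L_k\}$ and $r$.

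The key point is the following. Because $\pi$ is $\epsilon$-collapsed, every $\mathsf{T}$-orbit has $g$-diameter $\lesssim\epsilon$. Since $\tilde g$ is uniformly equivalent to the Euclidean metric, this forces the lattice $\Lambda$ to be $C\epsilon$-dense in $\mathbb{R}^s$: the flat torus $\mathbb{R}^s/\Lambda$, with metric comparable to $\delta$, has diameter $\le C\epsilon$. Hence for every $v\in\mathbb{R}^s$ there is some $\lambda\in\Lambda$ with $|v-\lambda|\le C\epsilon$. Since $\tau_\lambda^*\tilde g=\tilde g$, we get
\[
\tilde g-\tau_v^*\tilde g=\tau_\lambda^*\tilde g-\tau_v^*\tilde g=\tau_\lambda^*\big(\tilde g-\tau_{v-\lambda}^*\tilde g\big).
\]
By the mean value theorem in the $y$-variable, the $\mathcal{C}^k$-norm of $\tilde g-\tau_w^*\tilde g$ with $|w|\le C\epsilon$ is bounded by $|w|\cdot\|\tilde g_{ij}\|_{\mathcal{C}^{k+1}}\le C\epsilon\, L_{k+1}r^{-k-1}$. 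Translation by $\lambda$ preserves all coordinate $\mathcal{C}^k$-norms, so the same bound holds for $\tilde g-\tau_v^*\tilde g$.

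The last step converts the coordinate $\mathcal{C}^k$-norms into the intrinsic norms $\|\cdot\|_{\mathcal{C}^k(M,g)}$. Because $\xi$ is a local isometry from $(\cdot,\tilde g)$ onto $(M,g)$, and the Christoffel symbols of $\tilde g$ together with their derivatives are bounded in terms of $\{L_k\}$ and $r$, the covariant derivatives $(D^g)^i$ of a tensor are controlled by its coordinate derivatives up to order $i$. Covering $M$ by the finitely many sets $\pi^{-1}(\mathscr{U}_i)$ then gives the estimate with constants $C_k$ independent of $\epsilon$.

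I expect the main obstacle to be the density claim for $\Lambda$. One must check that ``$\epsilon$-Gromov--Hausdorff approximation'' really makes the fibers $\epsilon$-small, and that fiber diameter controls the lattice covering radius measured in the coordinate metric. I would argue this first. For $p,p'$ in the same fiber, the Gromov--Hausdorff condition gives $\mathtt{d}_g(p,p')<\epsilon$. Then I would join $\xi(x,y)$ and $\xi(x,y+v)$ by a curve of length $<\epsilon$ and lift it through $\xi$, using the bilipschitz bound \eqref{eq:rLbounded1}. The lift ends at $(x',y')$ with $|x'-x|+|y'-(y+v)-\lambda|\le e^{L_0/10}\epsilon$ for some $\lambda\in\Lambda$, which gives the covering radius bound. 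The restriction $\epsilon\ll\iota$, or equivalently $\epsilon\ll r$, ensures the curve stays inside a single chart.
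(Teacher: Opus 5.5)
Your proposal is correct and follows essentially the same route as the paper. Both take the $\big(r,\{L_k\}\big)$-bounded atlas from Theorem \ref{thm:weakcoord}, use the Gromov--Hausdorff part of $\epsilon$-collapse together with \eqref{eq:rLbounded1} to show that $\Lambda$ is $O(\epsilon)$-dense in $\mathbb{R}^s$, lift $f$ to a translation by an $O(\epsilon)$-short vector, and conclude with the mean value theorem and \eqref{eq:rLbounded2}. Two small differences are in your favour. You choose the representative $v-\lambda$ nearest a lattice point, whereas the paper bounds every vector of the fundamental parallelepiped $\Delta$ in \eqref{eq:lengthLambda}, which tacitly requires a suitably reduced choice of generators. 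Your curve-lifting argument also proves the density claim that the paper only asserts; it is best run at the chart centre, since $\Lambda$ does not depend on $x$.
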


\begin{proof}
By Theorem \ref{thm:weakcoord}, there exists a $\big(r,\{L_k\}\big)$-bounded trivializing atlas $\mathcal{A}$ for $\pi: (M,g) \to (B,\check{g})$. Since $M$ is compact, it is sufficient to prove the result on a trivializing chart $(\mathscr{U},\Lambda,\psi,\eta) \in \mathcal{A}$. Let $\{e_1,{\dots},e_s\}$ be a set of generators for the covering group $\Lambda$ and denote by $\Delta \coloneqq \{v = v^ie_i : 0 \leq v^i <1 \}$ the corresponding fundamental domain in $\mathbb{R}^s$. Let $\xi : \mathscr{B}^{\mathbb{R}^{m-s}}(0, r) \times \mathbb{R}^s \to M$ be the weak coordinate patch associated to $(\mathscr{U},\Lambda,\psi,\eta)$ and set $\tilde{g} \coloneqq \xi^*g$ for the sake of notation.

Since $\pi: (M,g) \to (B,\check{g})$ is $\epsilon$-collapsed (see Definition \ref{def:coll-bound}) and the action of $\Lambda$ is equivalent to the deck action of $\pi_1\big(\pi^{-1}(\mathscr{U})\big)$ (see Remark \ref{rmk:diagram}), the orbits of $\Lambda$ are $\epsilon$-dense in each fibre $\{x\} \times \mathbb{R}^s \subset \mathscr{B}^{\mathbb{R}^{m-s}}(0, r) \times \mathbb{R}^s$ with respect to $\tilde{g}$. More precisely, for every $x \in \mathscr{B}^{\mathbb{R}^{m-s}}(0, r)$ and $y_1, y_2 \in \mathbb{R}^s$, there exists $\lambda \in \Lambda$ such that
$\mathtt{d}_{\tilde{g}}\big((x,y_2),(x,y_1+\lambda)\big) < \epsilon$. By \eqref{eq:rLbounded1}, it follows that for any $y \in \mathbb{R}^s$ there exists $\lambda \in \Lambda$ such that $|y -\lambda | < e^{\frac{L_0}{20}} \epsilon$, which in turn implies that 
\begin{equation} \label{eq:lengthLambda}
|v| < 10s\, e^{\frac{L_0}{20}} \epsilon \quad \text{for all $v \in \Delta$} \,\, .
\end{equation}

Fix an element $f \in \mathsf{T}$ and notice that it preserves the open set $\pi^{-1}(\mathscr{U})$ and lifts to a translation
\begin{equation} \label{eq:liftf}
\tilde{f}: \mathscr{B}^{\mathbb{R}^{m-s}}(0, r) \times \mathbb{R}^s \to \mathscr{B}^{\mathbb{R}^{m-s}}(0, r) \times \mathbb{R}^s \,\, , \quad \tilde{f}(x,y) = (x,y+\tau) \,\, ,
\end{equation}
where $\tau \in \Delta$ is the unique element such that $\varpi_{\Lambda}(\tau) = f$. By \eqref{eq:liftf}, it follows that the components of the metric $\tilde{f}^*\tilde{g}$ are given by
$$
\big(\tilde{f}^*\tilde{g}\big)_{ij}(x,y) = \tilde{g}_{ij}(x,y+\tau) \,\, .
$$
Therefore, by the Mean Value Theorem, we get
\begin{equation} \label{eq:mvt}
\big\| \tilde{g}_{ij} -\big(\tilde{f}^*\tilde{g}\big)_{ij} \big\|_{\mathcal{C}^k} \leq \tilde{C}(m,k) \big\|\tilde{g}_{ij}\big\|_{\mathcal{C}^{k+1}} \, |\tau|_{\mathrm{st}} \,\, .
\end{equation}
for some constant $\tilde{C}(m,k)>0$. Finally, the thesis follows from \eqref{eq:rLbounded2}, \eqref{eq:lengthLambda} and \eqref{eq:mvt}.
\end{proof}

\end{document}